%% Modified from bare_conf.tex on 06-12-04
%% V1.2
%% 2002/11/18
%% by Michael Shell
%% mshell@ece.gatech.edu
%%
%
%\documentclass[10pt, conference]{IEEEtran}
%\documentclass[12pt,journal,draftcls,twoside,onecolumn]{IEEEtran}
\documentclass[10pt,preprint]{elsarticle}

% Elsevier article mod: remove footnote "Submitted to Elsevier"
\makeatletter
\def\ps@pprintTitle{%
 \let\@oddhead\@empty
 \let\@evenhead\@empty
 \def\@oddfoot{\hfil\it\today}%
 \let\@evenfoot\@oddfoot}
\makeatother

\usepackage{xspace,exscale}
\usepackage{fancybox}
\usepackage{graphicx}
\usepackage{color}
% this is for \mathfrak & better working \mathbb
\usepackage{amsfonts}
% proof environment
\usepackage{amsthm}
% this is for \gtrsim, \lesssim symbols (redefined as \simleq, \simgeq in my_style.sty)
\usepackage{amssymb}
\usepackage{url}

%automatic sorting/compression of citations
% noadjust -- to prevent automatic adding of ~ before \cite{}
%\usepackage[noadjust]{cite}

% amsmath is for align, multline env's.
%  -- cmex10 option directs TeX to use rescaled cmex10 for cmex7-9 fonts.
%  	This option is highly recommended by IEEEtran because older systems
%  	did not include Type1 font definitions for cmex7-9 fonts. However, on my
%  	systems it seems that DVIPS is instructed by
%  	/usr/share/texmf-texlive/fonts/map/dvips/cmex/ttcmex.map:
%  		cmex7 TeX-cmex7 <fmex7.pfb
%		cmex8 TeX-cmex8 <fmex8.pfb
%		cmex9 TeX-cmex9 <fmex9.pfb
%	Thus, I _DO NOT_ need cmex10 option
%\usepackage[cmex10]{amsmath}
\usepackage{amsmath}
	% AMSMath was so ample as to prohibit usage of \over, so we undo it
	\makeatletter
	\let\over=\@@over \let\overwithdelims=\@@overwithdelims
	\let\atop=\@@atop \let\atopwithdelims=\@@atopwithdelims
  	\let\above=\@@above \let\abovewithdelims=\@@abovewithdelims
  	\makeatother
% Also, note that the amsmath package sets \interdisplaylinepenalty to 10000
% thus preventing page breaks from occurring within multiline equations. Use:
\interdisplaylinepenalty=10000
% after loading amsmath to restore such page breaks as IEEEtran.cls normally
% does.

\usepackage{fixltx2e}
% fixltx2e, the successor to the earlier fix2col.sty, was written by
% Frank Mittelbach and David Carlisle. This package corrects a few problems
% in the LaTeX2e kernel, the most notable of which is that in current
% LaTeX2e releases, the ordering of single and double column floats is not
% guaranteed to be preserved. Thus, an unpatched LaTeX2e can allow a
% single column figure to be placed prior to an earlier double column
% figure. The latest version and documentation can be found at:
% http://www.ctan.org/tex-archive/macros/latex/base/

\usepackage{rotating}

%\usepackage{stfloats}
% stfloats.sty was written by Sigitas Tolusis. This package gives LaTeX2e
% the ability to do double column floats at the bottom of the page as well
% as the top. (e.g., "\begin{figure*}[!b]" is not normally possible in
% LaTeX2e). It also provides a command:
%\fnbelowfloat
% to enable the placement of footnotes below bottom floats (the standard
% LaTeX2e kernel puts them above bottom floats). This is an invasive package
% which rewrites many portions of the LaTeX2e float routines. It may not work
% with other packages that modify the LaTeX2e float routines. The latest
% version and documentation can be obtained at:
% http://www.ctan.org/tex-archive/macros/latex/contrib/sttools/
% Documentation is contained in the stfloats.sty comments as well as in the
% presfull.pdf file. Do not use the stfloats baselinefloat ability as IEEE
% does not allow \baselineskip to stretch. Authors submitting work to the
% IEEE should note that IEEE rarely uses double column equations and
% that authors should try to avoid such use. Do not be tempted to use the
% cuted.sty or midfloat.sty packages (also by Sigitas Tolusis) as IEEE does
% not format its papers in such ways.

% correct bad hyphenation here
\hyphenation{op-tical net-works semi-conduc-tor}

\usepackage{ifpdf}

\usepackage{subfigure}
\usepackage{psfrag}

\usepackage[%dvips,
            CJKbookmarks=true,
            bookmarksnumbered=true,
            bookmarksopen=true,
%						bookmarks=false,
            colorlinks=true,
            citecolor=red,
            linkcolor=blue,
            anchorcolor=red,
            urlcolor=blue,
            pdfauthor={Polyanskiy and Samorodnitsky}
            ]{hyperref}

%%%%%%%%%%%%%%%%%%%%%%%%%%%%%%%%%%%%%%%%%%%%%%%%%%%%%%%%%%%%%%%%%%
%%%%%%%%%%%%%%%%%%%%%%%%%%%%%%%%%%%%%%%%%%%%%%%%%%%%%%%%%%%%%%%%%%
%%%%                                                          %%%%
%%%%                      Local defines                       %%%%
%%%%                                                          %%%%
%%%%%%%%%%%%%%%%%%%%%%%%%%%%%%%%%%%%%%%%%%%%%%%%%%%%%%%%%%%%%%%%%%
%%%%%%%%%%%%%%%%%%%%%%%%%%%%%%%%%%%%%%%%%%%%%%%%%%%%%%%%%%%%%%%%%%

\newcommand{\matc}{\ensuremath{\mathcal{C}}}
\newcommand{\matx}{\ensuremath{\mathcal{X}}}

\newcommand{\matn}{\ensuremath{\mathcal{N}}}

\newcommand{\mate}{\ensuremath{\mathcal{E}}}

\newcommand{\Ent}{\ensuremath{\mathrm{Ent}}}

 %%SV-apr26
 %%SV-apr26

\newcommand{\mreals}{\ensuremath{\mathbb{R}}}
\newcommand{\mcomplex}{\ensuremath{\mathbb{C}}}

\newcommand{\FF}{\ensuremath{\mathbb{F}}}
\newcommand{\ZZ}{\ensuremath{\mathbb{Z}}}

\ifx\eqref\undefined
	\newcommand{\eqref}[1]{~(\ref{#1})}
\fi
\ifx\mod\undefined
	\def\mod{\mathop{\rm mod}}
\fi

\def\supp{\mathop{\rm supp}}
\def\vol{\mathop{\rm vol}}

\def\tr{\mathop{\rm tr}}

\def\sign{\mathop{\rm sgn}}

\def\EE{\mathbb{E}\,}

\def\Var{\mathrm{Var}}

\def\PP{\mathbb{P}}

\def\eqdef{\stackrel{\triangle}{=}}

\def\unifto{\mathop{{\mskip 3mu plus 2mu minus 1mu%
	\setbox0=\hbox{$\mathchar"3221$}%
	\raise.6ex\copy0\kern-\wd0%
	\lower0.5ex\hbox{$\mathchar"3221$}}\mskip 3mu plus 2mu minus 1mu}}

\def\Bern{\mathrm{Bern}}

%% These two are defined in amssymb package
\ifx\lesssim\undefined
\def\simleq{{{\mskip 3mu plus 2mu minus 1mu%
	\setbox0=\hbox{$\mathchar"013C$}%
	\raise.2ex\copy0\kern-\wd0%
	\lower0.9ex\hbox{$\mathchar"0218$}}\mskip 3mu plus 2mu minus 1mu}}
\else
\def\simleq{\lesssim}
\fi

\ifx\gtrsim\undefined
\def\simgeq{{{\mskip 3mu plus 2mu minus 1mu%
	\setbox0=\hbox{$\mathchar"013E$}%
	\raise.2ex\copy0\kern-\wd0%
	\lower0.9ex\hbox{$\mathchar"0218$}}\mskip 3mu plus 2mu minus 1mu}}
\else
\def\simgeq{\gtrsim}
\fi

% beautiful system of 2 alternatives preceded by {

% Macro to show off newstuff to authors

\def\la{\langle}
\def\ra{\rangle}

%%%%%%%%%%%%%%%%%%%%%%%%%%%%%%%%%%%%%%%%%%%%%%%%%%%%%%%%%%%%%%%%
%%%%%%%%%%%% CONDITIONAL COMPILATION TRICK %%%%%%%%%%%%%%%%%%%%%
%%%%%%%%%%%% Usage: \ifmapx TEXT \fi
%%%%%%%%%%%% The "TEXT" will only appear if *_apx.tex is compiled
%%%%%%%%%%%%%%%%%%%%%%%%%%%%%%%%%%%%%%%%%%%%%%%%%%%%%%%%%%%%%%%%
%
% Ok, all this pain is to produce the name "xxx_apx" with characters
% of category 12, since this is what jobname expands to. What I used is the
% trick that \string\abc produces a sequence of tokens (\,a,b,c) all of category 12,
% so I only need to kill the front \_12 which is what mkillslash macro doing.
%
%%%%%%%%%%%%
\newif\ifmapx
{\catcode`/=0 \catcode`\\=12/gdef/mkillslash\#1{#1}}
\edef\jobnametmp{\expandafter\string\csname uncertain_apx\endcsname}
\edef\jobnameapx{\expandafter\mkillslash\jobnametmp}
\edef\jobnameexpand{\jobname}
\ifx\jobnameexpand\jobnameapx
\mapxtrue
\else
\mapxfalse
\fi

\long\def\apxonly#1{\ifmapx{\color{blue}#1}\fi}

\newtheorem{theorem}{Theorem}
\newtheorem{lemma}[theorem]{Lemma}
\newtheorem{corollary}[theorem]{Corollary}
\newtheorem{definition}{Definition}
\newtheorem{proposition}[theorem]{Proposition}
\newtheorem{remark}{Remark}
%%%%%%%%%%%%%%%%%%%%%%%%%%%%%%%%%%%%%%%%%%%%%%%%%%%%%%%%%%%%%%%%%%
%%%%%%%%%%%%%%%%%%%%%%%%%%%%%%%%%%%%%%%%%%%%%%%%%%%%%%%%%%%%%%%%%%
%%%%                                                          %%%%
%%%%                Document begins here                      %%%%
%%%%                                                          %%%%
%%%%%%%%%%%%%%%%%%%%%%%%%%%%%%%%%%%%%%%%%%%%%%%%%%%%%%%%%%%%%%%%%%
%%%%%%%%%%%%%%%%%%%%%%%%%%%%%%%%%%%%%%%%%%%%%%%%%%%%%%%%%%%%%%%%%%

\begin{document}

% paper title
\title{Improved log-Sobolev inequalities, hypercontractivity and uncertainty principle on the hypercube}

\author[yp]{Yury Polyanskiy}
\ead{yp@mit.edu}
\address[yp]{Department of Electrical Engineering
and Computer Science, MIT, Cambridge, MA 02139 USA.}
\author[as]{Alex Samorodnitsky}
\ead{salex@cs.huji.ac.il}
\address[as]{School of Engineering and Computer Science,
The Hebrew University of Jerusalem,
Jerusalem 91904, Israel.}
\begin{abstract}
Log-Sobolev inequalities (LSIs) upper-bound entropy via a multiple of the Dirichlet form (i.e. norm of a gradient).
In this paper we prove a family of entropy-energy inequalities for the binary hypercube which provide a non-linear comparison between the entropy and the Dirichlet form and improve on the usual LSIs for functions with small support.
These non-linear LSIs, in turn, imply a new version of the hypercontractivity
for such functions. As another consequence, we derive a sharp form of the uncertainty principle for the hypercube:
a function whose energy is concentrated on a set of small size, and whose Fourier energy is concentrated on a small
Hamming ball must be zero. The tradeoff between the sizes that we derive is asymptotically optimal.
This new uncertainty principle implies a new estimate on the size of Fourier coefficients of sparse Boolean functions. %a new estimate of the metric properties of linear maps $\FF_2^k\to\FF_2^n$.
We observe that an analogous  (asymptotically optimal) uncertainty principle in the Euclidean space follows from the
sharp form of Young's inequality due to Beckner. This hints that non-linear LSIs augment Young's
inequality (which itself is sharp for finite groups).
\end{abstract}
% Note that keywords are not normally used for peerreview papers.
\begin{keyword}
Hamming space, log-Sobolev inequality, hypercontractivity, Fourier transform on the hypercube, uncertainty principle,
coding theory, Boolean functions
\end{keyword}

% make the title area
\maketitle
\tableofcontents

\section{Introduction}

\subsection{Definitions, background}
We introduce some standard notions for continous-time semigroups on finite state spaces, e.g.~\cite[Section
1.7.1]{bakry2006functional}.
Consider a finite alphabet $\matx$ and a matrix $(L_{x,y})_{x,y \in \matx}$ such that 1) $L_{x,y}\ge0$ for $x\neq y$;
and 2) $\sum_{y\in\matx} L_{x,y}=0$ for all $x$. Then $T_t = e^{tL}$ is a stochastic semigroup, for which we assume that
$\pi$ is a stationary measure. We define $\|f\|_p \eqdef \EE^{1\over p}[|f|^p]$ and $(f,g)=\EE[fg]$ with expectation over $\pi$. The
Dirichlet form of semigroup $T_t$ is
$$ \mate(f,g) \eqdef -\sum_{x,y} L_{x,y}f(y) g(x) \pi(x) = \EE_{\pi}[(-Lf)g]\,.$$
We also define $T_t^{\otimes n}$ -- a product semigroup on $\matx^n$ -- and notice that its Dirichlet form is given
by
\begin{equation}\label{eq:maten}
	\mate_n(f,g) \eqdef \sum_{k=1}^n \sum_{x_{\hat k} \in \matx^{n-1}} h(x_{\hat k}) \prod_{j\neq k} \pi(x_j)\,,
\end{equation}
where $x_{\hat k}=(x_1,\ldots,x_{k-1},x_{k+1},\ldots,x_n)$ and $h(x_{\hat k}) = \mate(f(x_{\hat k},\cdot), g(x_{\hat
k}, \cdot))$ is the action of Dirichlet form $\mate$ on $k$-th coordinate of $f$ and $g$ with other coordinates held
frozen.

We will be interested in understanding evolution of $\|f_t\|_p$, where $f_t = T_t f$. Notice that derivative of this quantity in $t$ gives
rise to $\mate(f_t, f_t^{p-1})$, whereas derivative in $p$ leads to $\Ent_{\pi}(|f_t|^p)$, where for any $g\ge 0$ we
define
$$ \Ent_{\pi}(g) \eqdef \EE_{\pi}\left[g(X) \ln{g\over \EE[g]}\right] = \EE[g]  D(\pi^{(g)}\|\pi)\,,$$
with $\pi^{(g)}(x) \eqdef {g(x)\pi(x)\over \EE[g]}$ and $D(\cdot\|\cdot)$ -- the Kullback-Leibler divergence.
The idea of bounding these two derivatives (in $t$ and $p$) in terms of one another was introduced in ~\cite{LG75}.
This explains introduction of the following concept.

We say that a semigroup admits a $p$-logarithmic Sobolev inequality ($p$-LSI for short), see~\cite[Section
3]{bakry1994hypercontractivite}, if for some constant $\alpha_p$
\begin{equation}\label{eq:plsi}
	\Ent_{\pi}(f^p) \le {1\over \alpha_p} \mate(f, f^{p-1})\,,
\end{equation}

We note that $\mate(f,f^{p-1}) \ge 0$ for $p>1$ and $\mate(f,f^{p-1})\le0$ for $p<1$ and this implies corresponding signs
for constants $\alpha_p$. As $p\to1$ we have $\mate(f,f^{p-1})\to0$ and so we need to renormalize by $1\over p-1$ in
this limit. Consequently, we define $1$-LSI as
$$ \Ent_{\pi}(f) \le {1\over \alpha_1} \mate(f, \ln f)\,,$$
which is required to hold for all $f>0$ on $\matx$.

 We do not
discuss full history of LSI, only mentioning that $p=2$ case originated in~\cite{LG75} and $p=1$
in~\cite{bobkov1998modified}; for more detailed history see~\cite{Bobkov2006,MOS12}. The $p=1$ case is also known as
modified LSI and connects to (one version of) discrete Ricci curvature~\cite{erbar2012ricci}.

We will mostly deal in this paper with a special case of a hypercube. Namely, we set
$\matx=\{0,1\}$, $L_{x,y}=-1\{x=y\} + 1/2$, $\pi=\Bern(1/2)$
and
\begin{equation}\label{eq:hypercube}
	T_t f(x) = f(x) {1+e^{-t}\over 2} + f(1-x){1-e^{-t}\over 2}\,.
\end{equation}
For this case the best LSI constants are $\alpha_p={2(p-1)\over p^2}$, see~\cite[Theorem 2.2.8]{saloff1997lectures}, and the Dirichlet form takes particularly simple
form:
\begin{align}\label{eq:diri_cube}
	\mate_n(f,g) &= -{1\over 2}(\triangle f,g), \qquad \triangle f(x) \eqdef \sum_{y: y\sim x} (f(y)-f(x))\\
	\mate_n(f,f) &= {1\over 4} 2^{-n} \sum_{(x,y): x\sim y} (f(x)-f(y))^2 \,,
\end{align}
where $x\sim y$ means that $x,y\in\{0,1\}^n$ differ in precisely one coordinate. \apxonly{In the second summation we
sum over all unordered pairs $(x,y)$ (i.e. $n2^n$ terms). Other forms:
\begin{align*} \nabla f(x) &\eqdef (f(x)-f(x+e_1), \ldots, f(x) - f(x+e_n))\,,\\
	\mate_n(f,g) &= {1\over 4}(\nabla f, \nabla g)
\end{align*}
}

We also quote one inequality from information theory, known as Mrs. Gerber's lemma, or MGL~\cite{wyner1973theorem},
which we write in the following form: for any $f\ge 0$
\begin{equation}\label{eq:mgl_1}
	{1\over n}{\Ent_{\pi^n}(T_t^{\otimes n} f)\over \EE[f]} \le \ln 2 -
		m\left(t, {1\over n} {\Ent_{\pi^n}(f)\over \EE[f]}\right)\,.
\end{equation}
Here $m(t,x) = h(h^{-1}(\ln2 - x) * {1-e^{-t}\over 2})$, where
\begin{align}\label{eq:hbin_def}
	h(x) \eqdef -x\ln x - (1-x)\ln(1-x)
\end{align}is
the binary entropy function, $h^{-1}:[0,\ln 2]\to[0,1/2]$ is its functional inverse and $a*b=(1-a)b+(1-b)a$ is the binary
convolution.

A less cryptic restatement of MGL is the following: For all $t\ge 0$
\begin{equation}\label{eq:mgl_2}
	\Ent_{\pi^n}(T_t^{\otimes n} f) \le \Ent_{\pi^n}(T_t^{\otimes n} f_{iid})\,,
\end{equation}
where $f_{iid}(x)=\prod_{k=1}^n f_1(x_k)$ with $f_1(\cdot)$ selected so that a) in~\eqref{eq:mgl_2} the equality holds for $t=0$; b)
$\EE[f_{iid}] = \EE[f]$. In other
words, MGL states that among all functions $f$ on the hypercube, $\Ent$ decreases slowest
for product functions. (Note that for a general product semigroup the statement~\eqref{eq:mgl_2} does not need to
hold even if we add an extra constraint that $f_1$ should be chosen so that, in addition to a) and b), it maximizes
$\Ent_\pi(T_t f_1)$; see~\cite[Theorem 6]{witsenhausen1974entropy}.)

\subsection{Motivation and Organization}

We motivate our investigation by the following three questions:
\begin{itemize}
\item Log-Sobolev inequality implies an estimate of the form
$$ \Ent(T_t^{\otimes n} f) \le e^{-Ct} \Ent(f)\,. $$
However, for the hypercube a stronger estimate is given by the MGL~\eqref{eq:mgl_1}.
\textit{Can MGL be derived from some strengthening of LSI?}

Note that by a method of comparison of Dirichlet forms, results derived from log-Sobolev inequalities can then be extended to
semigroups other than $T_t^{\otimes n}$. As an illustration, note that~\cite[Example 3.3]{DSC96} estimates speed of
convergence of a Metropolis chain on $\{0,\ldots n\}$ by comparing to $T_t^{\otimes n}$. Our methods allow to show
better estimates, similar to~\eqref{eq:mgl_1}.

\item Hypercontractivity inequality for the hypercube (variously attributed to~\cite{EN66,AB70,WB75,LG75}) says
\begin{equation}\label{eq:bonami}
	 \|T_t^{\otimes n} f\|_{p(t)} \le \|f\|_{p_0}\,, \qquad p(t) = 1+ (p_0-1)e^{2t}, \, p_0\ge1\,.
\end{equation}
	This is well known to be tight in the sense that for any $q>p(t)$ we can find $f$ s.t. $\|T_t f\|_q >
	\|f\|_{p_0}$. However, such $f$ will be very close to identity (for this particular
	semigroup). \textit{Is it possible to improve the range of $(p,q)$ in~\eqref{eq:bonami} provided $f$ is far from
	identity (say in the sense of $|\supp f|\ll 2^n$)?}

	For example, it is clear that $\|T_t^{\otimes n}\|_{1\to\infty} = (1+e^{-t})^n$. If $f$ has small support, we
	have $\|f\|_{p_0} \ge e^{n\rho_0} \|f\|_1$, where $\rho_0=(1-{1\over p_0}){1\over n}\ln {2^n\over |\supp f|}$ and thus
	\begin{equation}\label{eq:hca_p}
		\|T_t^{\otimes n} f\|_{\infty} \le \|f\|_{p_0}, \qquad \forall t \ge \ln {1\over e^{\rho_0}-1}
	\end{equation}	
	which is a significant improvement of~\eqref{eq:bonami} for large times $t$.
\item Finally, it was noticed in~\cite{beckner1995pitt} that LSIs on Euclidean space are closely
related to a form of uncertainty principle, which connects the tail behavior of the function and its Fourier transform.
\textit{We ask whether LSIs on finite groups (e.g. hypercube) imply bounds on the tradeoff between the sizes of
supports of the function and its Fourier image.}
\end{itemize}

All these questions will be answered positively.

The structure of the paper is the following. In Section~\ref{sec:lsi} we describe the main concept of this paper -- the
non-linear LSIs and prove some of its consequences, such as refined hypercontractivity and general MGL. In
Section~\ref{sec:hyper} we switch from general theory to the particular case of the hypercube. We establish explicit forms
of new LSIs and new hypercontractive estimates for functions of small support. In Section~\ref{sec:up} we apply
the latter to establish a sharp version of the uncertainty principle on the hypercube. Finally, Section~\ref{sec:ft_coding} applies the uncertainty principle
to derive a lower bound on large-degree Fourier coefficients of sparse Boolean functions.

\section{Non-linear log-Sobolev inequalities}\label{sec:lsi}

In this section we introduce a family of non-linear log-Sobolev inequalities (LSI) and prove three implications
relevant for this paper. We mention that special case of $p=2$ (which is the main case, especially for diffusion
semigroups) has been known in analysis for a long time under the name of ``entropy-energy'' inequalities (see below), and thus our
generalization is to consider general $p$. The results we prove are: tensorization
(i.e. extension from $T_t$ to $T_t^{\otimes n}$), integrating $1$-LSI to get entropy decay, integrating $p$-LSI to get
hypercontractivity. The first two are routine verifications, whereas the third required some new ideas.

\begin{definition}\label{def:nlsi} For $p\ge1$ and a \underline{concave}, continuous, non-negative function $\Phi_p:[0,\infty)\to[0,\infty)$ with $\Phi_p(0)=0$,
let us define a $(p,\Phi_p)$-LSI as
\begin{equation}\label{eq:pnlsi}
	{\Ent(f^p)\over \EE[f^p]} \le \Phi_p\left({\mate(f, f^{p-1})\over \EE[f^p]}\right)\,,
\end{equation}
where for $p=1$ we understand $\mate(f, f^{p-1}) = \mate(f, \ln f)$.
For $p<1$ the domain of $\Phi_p$ is replaced with $(-\infty,0]$, and the definition remains the same.
 When convenient, we will restate $(p,\Phi_p)$-LSI in the form
\begin{equation}\label{eq:pnlsi_2}
	\pm {\mate(f, f^{p-1})\over \EE[f^p]} \ge b_p\left({\Ent(f^p)\over \EE[f^p]}\right)\,, \qquad
\end{equation}
where $b_p:[0,\infty)\to[0,\infty]$ is a convex increasing with $b_p(0)=0$ function defined as $b_p(y)\eqdef
	\inf\{x: \Phi_p(x) \ge y\}$ (with the usual agreement that $\inf{\emptyset}=\infty$). %(i.e. $b_p=\pm\Phi_p^{-1}$ with $b_p=\infty$ if
%$\Phi_p$ is constant for $x\ge x_0$).
The $\pm$ is taken to be $+$ for $p\ge 1$ and $-$ for $p<1$.
\end{definition}
\begin{remark} For convenience we define $\Phi_p$ and $b_p$ on $[0,\infty)$ even though the arguments in~\eqref{eq:pnlsi}
and~\eqref{eq:pnlsi_2} may be universally bounded by constants $<\infty$. Note also that a concave non-negative function
on $[0,\infty)$ must be increasing on $[0,a)$ and then constant on $[a,\infty)$ (either interval could be empty).\end{remark}

It is clear from concavity of $\Phi_p$ that the linear-LSIs~\eqref{eq:plsi} are obtained by taking ${1\over\alpha_p} =
\left.{d\over dx}\right|_{x=0}\Phi_p$.
We briefly review the history of such inequalities:
\begin{itemize}
\item For a Lebesgue measure on $\mreals^n$ and $\mate(f,g)=\int(\nabla f, \nabla g)$ the $p=2$ inequality takes the
form:
\begin{equation}\label{eq:eucl_nlsi}
	\int_{\mreals^n} h^2(x) \ln h^2 dx \le {n\over 2} \ln\left({2\over n\pi e} \int_{\mreals^n} \|\nabla h(x)\|^2
	dx\right), \qquad \int h(x)^2 dx = 1\,.
\end{equation}
It appeared in information theory~\cite[(2.3)]{stam1959some} and~\cite{costa1984similarity} as a solution to the problem of minimizing
Fisher information subject to differential entropy constraint (the minimizer is Gaussian density). In
analysis,~\eqref{eq:eucl_nlsi} has been used early by~\cite{weissler1978logarithmic}.
\item Inequality~\eqref{eq:eucl_nlsi} is in fact equivalent to a 2-LSI~\cite{LG75} for Ornstein-Uhlenbeck semigroup:
\begin{equation}\label{eq:lsi_gross}
		\Ent_{\gamma}(f^2) \le 2 \int_{\mreals^n} \|\nabla f\|^2 d\gamma\,,
\end{equation}	
	where $\gamma = \matn(0,I_n)$ (to see equivalence, take $f^2(x) = \lambda h^2(\lambda x) (2\pi)^{n/2}
	e^{\|x\|^2/2}$ with $\lambda = (4 \int \|\nabla h\|^2)^{-{1\over 2}}$ and integrate by parts). It is
	known~\cite{Carlen91} that~\eqref{eq:eucl_nlsi} (resp.,~\eqref{eq:lsi_gross}) is saturated by and only by
	Gaussian densities (resp., exponentials). In particular, taking $f=e^{ax - a^2}$ in~\eqref{eq:lsi_gross} shows
	that for Ornstein-Uhlenbeck semigroup no
	improvement of~\eqref{eq:lsi_gross}, in the sense of Def.~\ref{def:nlsi} is possible (linear LSI is the best
	one).
\item More generally, the $p=2$ inequalities were introduced into operator theory by Davies and
Simon~\cite{davies1984ultracontractivity} under the name of entropy-energy inequalities;
see~\cite{bakry1994hypercontractivite} for a survey.
\item A $p=2$ inequality for the hypercube was proved in~\cite{samorodnitsky2008modified} for the purpose
of showing that the Faber-Krahn problem on the hypercube is asymptotically solved by a Hamming ball. Same reference
mentioned~\cite[paragraph after (11)]{samorodnitsky2008modified} a tightening of hypercontractivity~\eqref{eq:bonami} for
$p_0=2$ and functions of large entropy, although no proof was published at the time.

\item Miclo~\cite{miclo1999majoration} proved a class of restricted entropy-energy inequalities: Given
continuous $\phi:\mreals_+\to\mreals_+$ such that $\phi(x)/(x\ln x)$ is monotonically increasing for large enough $x$ there exists
a continuous increasing $\psi$ and a universal constant $C>0$ such that
	\begin{equation}\label{eq:miclo_lsi}
		\Var_{\pi}[f^2] \ge C \psi(\Ent_{\pi}(f^2))
\end{equation}	
	for all $\pi$ simultaneously but only for functions $f$ satisfying $\EE_\pi[f^2] = 1$ and $\EE_\pi[\phi(f^2)] \le K$
	Function $\psi$ in~\eqref{eq:miclo_lsi} depends on $\phi$ and $K$ roughly via $\psi(8K {x\ln x \over \phi(x)})={x\over
	\phi(x)}$, and in particular $\psi(t)=o(t)$ as $t\to 0$, so that~\eqref{eq:miclo_lsi} does not imply standard
	LSI~\eqref{eq:plsi}. Here, we are interested in improving upon~\eqref{eq:plsi} and also in
	unrestricted inequalities (without constraint on
	$\EE_\pi[\phi(f^2)]$), but for a fixed known $\pi$. \apxonly{One example of inequality for $\phi(x) = x^2$ is:
	  $$ \EE[f^4] \Var[f] \ln {1\over \Var[f]} \ge C \Ent(f^2) \qquad \forall \EE[f^2]=1 $$}
\end{itemize}

We move on to proving general results about non-linear LSIs.

\begin{theorem}\label{th:tenso}(Tensorization)
Suppose that $(p,\Phi_p)$-LSI holds for semigroup $(\matx, \pi, T_t, \mate)$. Then for all $n\ge 1$ the
$(p, n\Phi_p({1\over n}\cdot))$-LSI holds for $(\matx^n, \pi^n, T_t^{\otimes n}, \mate_n)$. In other
words, for all $f:\matx^n\to\mreals_+$  we have
\begin{equation}\label{eq:tenso}
	{1\over n} {\Ent_{\pi^n}(f^p)\over \EE_{\pi^n}[f^p]} \le
	\Phi_p\left({1\over n}{\mate_n(f, f^{p-1})\over \EE_{\pi^n}[f^p]}\right)\,,
\end{equation}	
where $\pi^n = \prod_{k=1}^n \pi$ -- a product measure on $\matx^n$ and $\mate_n$ is the Dirichlet form associated to
the product semigroup~\eqref{eq:maten}.
\end{theorem}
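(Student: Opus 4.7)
The plan is to reduce the tensorized inequality to the single-coordinate $(p,\Phi_p)$-LSI by combining the sub-additivity of entropy with two applications of Jensen's inequality, exploiting concavity of $\Phi_p$.

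First, I would fix $k\in\{1,\ldots,n\}$ and condition on $x_{\hat k}\in\matx^{n-1}$. Applying the single-coordinate $(p,\Phi_p)$-LSI to $f(x_{\hat k},\cdot)$ yields
\begin{equation*}
\Ent_\pi\bigl(f^p(x_{\hat k},\cdot)\bigr)\;\le\; g_k(x_{\hat k})\,\Phi_p\!\left(\frac{h_k(x_{\hat k})}{g_k(x_{\hat k})}\right),
\end{equation*}
where $g_k(x_{\hat k})\eqdef \EE_\pi[f^p(x_{\hat k},\cdot)]$ and $h_k(x_{\hat k})\eqdef \mate(f(x_{\hat k},\cdot),f^{p-1}(x_{\hat k},\cdot))$. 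The classical sub-additivity of entropy on product spaces,
\begin{equation*}
\Ent_{\pi^n}(f^p)\;\le\;\sum_{k=1}^n \EE_{\pi^{n-1}}\!\bigl[\Ent_\pi(f^p\mid x_{\hat k})\bigr],
\end{equation*}
then gives $\Ent_{\pi^n}(f^p)\le\sum_k \EE_{\pi^{n-1}}\bigl[g_k\,\Phi_p(h_k/g_k)\bigr]$.

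Second, I would invoke the perspective trick: since $\Phi_p$ is concave, its perspective $(g,h)\mapsto g\,\Phi_p(h/g)$ is jointly concave on $\{g>0\}\times\mathrm{dom}(\Phi_p)$. Jensen's inequality over the product measure $\pi^{n-1}$ therefore yields
\begin{equation*}
\EE_{\pi^{n-1}}\!\bigl[g_k\,\Phi_p(h_k/g_k)\bigr]\;\le\;\EE[g_k]\,\Phi_p\!\left(\frac{\EE[h_k]}{\EE[g_k]}\right)\;=\;A\,\Phi_p\!\left(\frac{\EE[h_k]}{A}\right),
\end{equation*}
where $A\eqdef \EE_{\pi^n}[f^p]=\EE_{\pi^{n-1}}[g_k]$ is independent of $k$. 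Summing over $k$ and using the defining decomposition $\mate_n(f,f^{p-1})=\sum_{k=1}^n\EE_{\pi^{n-1}}[h_k]$ from~\eqref{eq:maten}, I get
\begin{equation*}
\frac{\Ent_{\pi^n}(f^p)}{A}\;\le\;\sum_{k=1}^n \Phi_p\!\left(\frac{\EE[h_k]}{A}\right).
\end{equation*}

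Third, I would apply Jensen once more, this time to the concave function $\Phi_p$ itself (with the uniform average over $k$) to obtain
\begin{equation*}
\frac{1}{n}\sum_{k=1}^n \Phi_p\!\left(\frac{\EE[h_k]}{A}\right)\;\le\;\Phi_p\!\left(\frac{1}{n}\sum_{k=1}^n\frac{\EE[h_k]}{A}\right)\;=\;\Phi_p\!\left(\frac{1}{n}\,\frac{\mate_n(f,f^{p-1})}{A}\right),
\end{equation*}
which is exactly~\eqref{eq:tenso} after dividing by $n$. The same argument covers $p<1$, since $\Phi_p$ is then concave and non-negative on $(-\infty,0]$, $h_k\le 0$, and perspective concavity still holds; for $p=1$ one reads $f^{p-1}=\ln f$ throughout.

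The only non-routine step is the perspective concavity observation: without it one cannot pull the $x_{\hat k}$-average inside $\Phi_p$ because the denominator $g_k(x_{\hat k})$ genuinely varies. Recognizing that $g\,\Phi_p(h/g)$ is the perspective of $\Phi_p$ (and hence jointly concave) is the key idea that makes the tensorization go through beyond the linear LSI regime, where the denominators cancel automatically.
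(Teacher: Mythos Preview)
Your proof is correct and follows essentially the same approach as the paper's: sub-additivity of entropy, coordinate-wise application of the $(p,\Phi_p)$-LSI, Jensen's inequality via the joint concavity of the perspective $(g,h)\mapsto g\,\Phi_p(h/g)$, and a final Jensen over the $n$ coordinates. The paper presents the argument for $n=2$ only and you carry it out for general $n$, but the ideas and the key perspective-concavity step are identical.
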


\begin{theorem}\label{th:mgl}(General MGL) Suppose a semigroup $T_t$ admits a $(1, \Phi_1)$-LSI. Let $b_1=\Phi_1^{-1}$
be a convex, strictly increasing inverse of $\Phi_1$ and assume that the differential equation
$$ {d\over dt} \tilde \rho(t) = -b_1(\tilde \rho(t))$$
has a $\matc^1$-solution $\tilde\rho(t)$ on $[0, t_0)$ with $\tilde\rho(0)>0$. Then for any $f:\matx^n\to\mreals_+$ with
${1\over n}{\Ent(f)\over \EE[f]}\le \tilde\rho(0)$ we have
$$ \Ent(T_t^{\otimes n} f)\le n\tilde \rho(t) \EE[f] \qquad \forall 0 \le t < t_0\,.$$
\end{theorem}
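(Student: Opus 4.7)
The plan is to combine three ingredients: the tensorization theorem (Theorem~\ref{th:tenso}), the standard de Bruijn--style identity $\frac{d}{dt}\Ent(T_t^{\otimes n}f) = -\mate_n(T_t^{\otimes n}f,\ln T_t^{\otimes n}f)$, and a Gr\"onwall-type comparison for the resulting differential inequality against the ODE defining $\tilde\rho$.

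First I would tensorize. Applying Theorem~\ref{th:tenso} to the assumed $(1,\Phi_1)$-LSI promotes it to a $(1, n\Phi_1(\cdot/n))$-LSI for the product semigroup: for any positive $g$ on $\matx^n$,
\begin{equation*}
	\frac{1}{n}\frac{\Ent_{\pi^n}(g)}{\EE[g]} \le \Phi_1\!\left(\frac{1}{n}\frac{\mate_n(g,\ln g)}{\EE[g]}\right),
\end{equation*}
or, inverting through $b_1=\Phi_1^{-1}$ (monotone increasing),
\begin{equation*}
	\frac{1}{n}\frac{\mate_n(g,\ln g)}{\EE[g]} \ge b_1\!\left(\frac{1}{n}\frac{\Ent_{\pi^n}(g)}{\EE[g]}\right).
\end{equation*}

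Next, since the semigroup preserves $\EE[f]$, I set $\rho(t)\eqdef \frac{1}{n}\frac{\Ent_{\pi^n}(T_t^{\otimes n}f)}{\EE[f]}$. Writing $g_t=T_t^{\otimes n}f$ and using $\frac{d}{dt}g_t = L_n g_t$ together with $\EE[L_n g_t]=0$, one gets the standard identity $\frac{d}{dt}\Ent_{\pi^n}(g_t) = \EE[(L_n g_t)\ln g_t] = -\mate_n(g_t,\ln g_t)$. Dividing by $n\EE[f]$ and inserting the tensorized LSI applied to $g_t$ yields the key differential inequality
\begin{equation*}
	\rho'(t) \le -b_1(\rho(t)), \qquad \rho(0) \le \tilde\rho(0).
\end{equation*}

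The last step is to compare $\rho$ to the ODE solution $\tilde\rho$. Since $b_1$ is (strictly) increasing and $\tilde\rho$ is $\matc^1$, a standard argument works: if the open set $\{t: \rho(t)>\tilde\rho(t)\}$ were nonempty, let $t_1$ be its left endpoint, so $\rho(t_1)=\tilde\rho(t_1)$ by continuity and $\rho(t)>\tilde\rho(t)$ on $(t_1,t_1+\epsilon)$. But then at any such $t$, monotonicity of $b_1$ gives $\rho'(t)\le -b_1(\rho(t))\le -b_1(\tilde\rho(t))=\tilde\rho'(t)$, so integrating from $t_1$ forces $\rho(t)\le\tilde\rho(t)$, a contradiction. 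Hence $\rho(t)\le\tilde\rho(t)$ throughout $[0,t_0)$, which is exactly the claim $\Ent(T_t^{\otimes n}f)\le n\tilde\rho(t)\EE[f]$.

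The only subtle point I anticipate is handling the boundary case where $f$ (hence $g_t$) vanishes, making $\ln g_t$ singular; this is dealt with either by the standard convention $0\ln 0=0$ in the definition of $\mate_n(g,\ln g)$ or, more cleanly, by first replacing $f$ with $f+\epsilon$, applying the argument, and letting $\epsilon\downarrow 0$ using continuity of $\Ent$ and of $b_1$ at $\rho(0)$. Everything else is routine given Theorem~\ref{th:tenso}.
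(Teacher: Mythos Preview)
Your proposal is correct and follows the same three-step route as the paper: tensorize the $(1,\Phi_1)$-LSI via Theorem~\ref{th:tenso}, use the de Bruijn identity to obtain the differential inequality $\rho'(t)\le -b_1(\rho(t))$, and then compare with the ODE for $\tilde\rho$. The only cosmetic difference is in the final comparison step---the paper substitutes $\alpha(t)=\ln\rho(t)-\ln\tilde\rho(t)$ and invokes monotonicity of $\Psi(x)=b_1(x)/x$, whereas your direct subtract-and-integrate argument uses only monotonicity of $b_1$ itself; both are standard Gr\"onwall-type devices.
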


\begin{theorem}\label{th:hca}(Hypercontractivity)  Fix
a non-constant function $f:\matx^n \to \mreals_+$ and $p_0\in(1,\infty)$. Then there is a finite $t_0$ and a unique
function $p(t)$ on $[0,t_0)$ satisfying $\|T_t^{\otimes n} f\|_{p(t)}=\|f\|_{p_0}$. This function is
$\matc^\infty$-smooth, strictly increasing and surjective onto $[p_0,\infty)$ with $p(0)=p_0$. Furthermore, if a
semigroup $T_t$ admits a $(p,\Phi_p)$-LSI for each $p\ge p_0$, then
\begin{align}
	{d\over dt}p(t) &\ge {p(t)(p(t)-1)\over \rho_0} b_{p(t)}\left(p(t) \rho_0\over p(t)-1\right)\,, \qquad \rho_0  =
	{1\over n} \ln {\|f\|_{p_0}\over \|f\|_1}\,.\label{eq:hca_1}
\end{align}
\end{theorem}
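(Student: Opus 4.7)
The plan has three parts: (a) establish existence, uniqueness and smoothness of $p(t)$ via the implicit function theorem; (b) differentiate the defining relation $\|T_t^{\otimes n}f\|_{p(t)}=\|f\|_{p_0}$ to obtain an exact formula for $p'(t)$; and (c) combine the tensorized $(p,\Phi_p)$-LSI from Theorem~\ref{th:tenso} with a log-convexity bound on $\Ent(g^p)$ to convert this formula into \eqref{eq:hca_1}.

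For (a), let $F(t,p)\eqdef\|T_t^{\otimes n}f\|_p$, which is $\matc^\infty$ on $[0,\infty)\times(1,\infty)$ since the state space is finite and $f>0$. Invertibility of $e^{tL}$ keeps $T_t^{\otimes n}f$ non-constant, so $\partial_p F>0$ strictly, and the implicit function theorem produces a unique $\matc^\infty$ solution $p(t)$ on a maximal interval $[0,t_0)$. Since $T_t^{\otimes n} f\to\EE[f]=\|f\|_1<\|f\|_{p_0}$ entry-wise, one has $\|T_t^{\otimes n} f\|_\infty<\|f\|_{p_0}$ for all large $t$, hence $t_0<\infty$ and $p(t)\to\infty$ as $t\uparrow t_0$, giving surjectivity onto $[p_0,\infty)$.

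For (b), write $g=T_t^{\otimes n}f$ and $p=p(t)$. Differentiating $\EE[g^p]=\|f\|_{p_0}^p$ in $t$, using $\partial_t g = Lg$ (tensorized), $\EE[g^{p-1}Lg]=-\mate_n(g,g^{p-1})$, and the identity $p\EE[g^p\ln g]=\Ent(g^p)+\EE[g^p]\ln\EE[g^p]$, after cancelling the $\EE[g^p]\ln\EE[g^p]$ terms one arrives at
$$
	p'(t)=\frac{p^2\,\mate_n(g,g^{p-1})}{\Ent(g^p)}.
$$
Both numerator and denominator are strictly positive for non-constant $g$, confirming strict monotonicity and completing the first half of the theorem.

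For (c), Theorem~\ref{th:tenso} gives $\mate_n(g,g^{p-1})/\EE[g^p]\ge n\, b_p(\rho)$ with $\rho\eqdef\Ent(g^p)/(n\EE[g^p])$, so $p'(t)\ge p^2 b_p(\rho)/\rho$. To recast $\rho$ in terms of $\rho_0$, I invoke log-convexity of $u\mapsto\ln\|g\|_{1/u}$ (the standard H\"older consequence): comparing this convex function at $u=1/p$ and $u=1$, and using $\tfrac{d}{dp}\ln\|g\|_p=\Ent(g^p)/(p^2\EE[g^p])$, one rearranges to
$$
	\frac{\Ent(g^p)}{\EE[g^p]}\ \ge\ \frac{p}{p-1}\,\ln\frac{\|g\|_p}{\|g\|_1}\ =\ \frac{p\cdot n\rho_0}{p-1},
$$
where $\|g\|_1=\EE[f]=\|f\|_1$ by stationarity of $\pi$ and $\|g\|_p=\|f\|_{p_0}$ by construction. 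Hence $\rho\ge p\rho_0/(p-1)$. Since $b_p$ is convex with $b_p(0)=0$, the map $x\mapsto b_p(x)/x$ is non-decreasing, so $b_p(\rho)/\rho\ge b_p(p\rho_0/(p-1))\cdot(p-1)/(p\rho_0)$, and substituting this into $p'(t)\ge p^2 b_p(\rho)/\rho$ yields \eqref{eq:hca_1}. The main obstacle is the log-convexity step: it is what injects $\rho_0$ into the entropy bound, marrying the $L^p$-data in the hypothesis to the entropy-based LSI machinery on the right.
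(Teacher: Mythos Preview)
Your proof is correct and follows essentially the same route as the paper's. The only difference is cosmetic: the paper works with the variable $\xi=1/p$ and the auxiliary function $\phi(t,\xi)=\ln\|T_t f\|_{1/\xi}$, deriving $\xi'(t)=-\mate(t)/E(t)$, whereas you differentiate directly in $p$ and obtain the equivalent formula $p'(t)=p^2\mate_n(g,g^{p-1})/\Ent(g^p)$. Both proofs then apply the tensorized $(p,\Phi_p)$-LSI, invoke the same H\"older/log-convexity lower bound $\Ent(g^p)/\EE[g^p]\ge \tfrac{p}{p-1}\ln(\|g\|_p/\|g\|_1)$ (this is the paper's~\eqref{eq:hca_5}), and finish with the monotonicity of $x\mapsto b_p(x)/x$.
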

\begin{proof}[Proof of Theorem~\ref{th:tenso}] Let us consider the case $n=2$. For a function $f(x_1,x_2)$ denote
by $\Ent_{\pi_i}(f^p)$ the entropy evaluated only along $x_i$, $i=1,2$. Then, from standard chain-rule and convexity of
$\Ent$ we have
\begin{align} \Ent_{\pi_1 \times \pi_2}(f^p) &= \EE_{X_1}[\Ent_{\pi_2}(f^p)] + \Ent_{\pi_1}(\EE_{X_2}[f^p])\\
				  &\le\EE_{X_1}[\Ent_{\pi_2}(f^p)] + \EE_{X_2} [\Ent_{\pi_1}(f^p)] \label{eq:tenso_1}
\end{align}
\apxonly{Note: For any positive $\sigma$-finite measures $\pi_1 \times \pi_2$ we have for all $f\in C_c(\mreals^2)$:
	$$ \Ent_{\pi_1}(\int d\pi_2 f) \le \int d\pi_2 \Ent_{\pi_1}(f)\,.$$
	This follows again by Jensen, since if $f$ is supported on a compact, we can rescale to make $\int d\pi_2$ be
	integral over probability measure. Thus, tensorization works for all general measures too.}

Now, we apply $\Phi_p$-LSI to each term (not forgetting appropriate normalization). For example, for the first term we
get
\begin{align} \EE_{X_1}[\Ent_{\pi_2}(f^p)] &\le
	\EE_{X_1}\left[ \Phi_p\left(\mate_{\pi_2}(f,f^{p-1})\over \EE_{X_2}[f^p] \right) \EE_{X_2}[f^p]
	\right] \\
	&\le
	\Phi_p\left(\EE_{X_1}[\mate_{\pi_2}(f,f^{p-1})]\over \EE_{X_1,X_2}[f^p] \right) \EE_{X_1,X_2}[f^p]\,,
	\label{eq:tenso_2}
\end{align}
where in the second step we used Jensen's inequality and the fact that
$$ (x,y) \mapsto {\Phi\left({x\over y}\right) y} $$
is jointly concave for any concave function $\Phi$. Now plugging~\eqref{eq:tenso_2} (and its analog for the second term)
into~\eqref{eq:tenso_1} and after applying Jensen's inequality again we get
$$ {1\over 2}{\Ent_{\pi_1 \times \pi_2}(f^p)\over  \EE_{X_1,X_2}[f^p] } \le
	\Phi_p \left({1\over 2} {\EE_{X_1}[\mate_{\pi_2}(f,f^{p-1})] + \EE_{X_2}[\mate_{\pi_1}(f,f^{p-1})]\over
	\EE_{X_1,X_2}[f^p] }\right)\,,$$
which is precisely~\eqref{eq:tenso}. The $n>2$ is treated similarly.
\end{proof}

\begin{proof}[Proof of Theorem~\ref{th:mgl}] Since the statement is scale-invariant, we assume $\EE[f]=1$. Define
$\rho(t)\eqdef {1\over n} \Ent(T_t^{\otimes n} f)$. Consider the identity
$$ {d\over dt} \Ent(T_t^{\otimes n} f) = -\mate(T_t^{\otimes n} f, \ln T_t^{\otimes n} f)\,.$$
From tensorizing the $(1,\Phi_1)$-LSI we get
$$ {1\over n} \mate(T_t^{\otimes n} f, \ln T_t^{\otimes n} f) \ge b_1(\rho(t))\,,$$
and hence
$$ \rho'(t) \le -b_1(\rho(t))\,.$$
Let us introduce $\alpha(t) = \ln \rho(t) - \ln \tilde\rho(t)$, then we have for $\alpha(t)$ the following
\begin{equation}\label{eq:mgl_3}
	\alpha'(t) \le -\Psi(\tilde\rho(t) e^{\alpha(t)}) + \Psi(\tilde\rho(t))\,,
\end{equation}
where $\Psi(x) = b_1(x)/x$ is a non-decreasing function of $x\ge 0$. We know $\alpha(0)\le 0$. Suppose that for some
$\tilde t_0>0$
we have $\alpha(\tilde t_0)>0$. Let $t_1=\sup\{0\le t<\tilde t_0: \alpha(t)=0\}$. From continuity of $\alpha$ we have
$t_1<\tilde t_0$,
$\alpha(t_1)=0$ and
$\alpha(t)>0$ for all $t\in(t_1,\tilde t_0]$.  From mean value theorem, we have for
some $t_2\in(t_1,\tilde t_0)$ that $\alpha'(t_2) > 0$. But then from monotonicity of $\Psi$, we have
$$ \Psi(\tilde\rho(t_2) e^{\alpha(t_2)}) - \Psi(\tilde\rho(t_2))\ge 0\,,$$
contradicting~\eqref{eq:mgl_3}. Hence $\alpha(\tilde t_0)\le 0$ for all $\tilde t_0 \in (0,t_0)$.
\end{proof}

\begin{proof}[Proof of Theorem~\ref{th:hca}] The core idea is to integrate the estimates obtained from a non-linear
$p$-LSI. Integrating entropy-energy inequalities have been done before for establishing ultra-contractivity (i.e. for
bounding the kernel function of $T_t$), see e.g.~\cite[Theorem 4.4]{bakry1994hypercontractivite}. However, for $p\to q$
estimates we will need a new idea (see~\eqref{eq:hca_6} below).

Since all the statements are scale-invariant, we assume $\EE[f]=1$. To
avoid clutter, we will write $T_t$ instead of $T_t^{\otimes n}$. We
 define the following function on $\mreals_+^2$
 $$ \phi(t,\xi) \eqdef \ln \|T_t f\|_{1\over \xi}\,.$$
 It is clear that $\phi$ is monotonically decreasing in $\xi$. Steepness of $\phi$ in $\xi$ encodes information about non-uniformity of $T_t f$.
 As time progresses,
 $\xi\mapsto\phi(t,\xi)$ converges to an all-zero function. MGL, LSI and hypercontractivity are estimates on the speed
 of this relaxation.

 We summarize the information we have about $\phi(t,\xi)$ assuming $f$ is non-constant:
 \begin{itemize}
 \item A consequence of H\"older's inequality, cf.~\cite[Theorems 196-197]{HLP88}, implies
 $\xi \mapsto \ln \|g\|_{1\over \xi}$ is strictly convex, unless $g=c 1_S$ (a scaled indicator), in
 which case the function of is linear in $\xi$. Thus, $\phi(t, \xi)$ is convex in $\xi$.
 \item We have
 $$ \phi(0, \xi_2) \ge \phi(0, \xi_1) + \left(\xi_1-\xi_2\right) \ln {1\over \pi^n[\supp f]} \qquad \forall \xi_2 <
 \xi_1$$
 with equality iff $f=c1_S$ (scaled indicator).
 \item Note that $T_t f = 0$ has only $f=0$ as solution (indeed, $\det e^{tL} = e^{\tr L} \neq 0$ since $\matx$ is
 finite). So $\phi(t,\xi)$ is finite and infinitely differentiable in $(t,\xi)$.
 \item The function $t\mapsto \phi(t,\xi)$ is strictly decreasing from $\phi(0,\xi)$ to $0$ for any $\xi<1$ and strictly
 increasing from $\phi(0,\xi)$ to 0 for $\xi>1$. Indeed, $\|T_t f\|_p = \|f\|_p$
 implies $f$ is constant. Furthermore, $\|T_t f\|_{1\over\xi}\to \EE[f]=1$ since $T_t f \to \EE[f]$ as $t\to\infty$.
 \item Consequently, for each $\xi_0$ the fiber
 \begin{equation}\label{eq:hca_7}
 	\{t: \phi(t,\xi_0) = c\}
\end{equation}
 consists of at most one point. Define $t_0$ as the unique solution of
 $$ \phi(t_0, 0) = n\rho_0\,. $$
 Solution exists from continuity of $\phi$ and the fact that $\phi(0,0) = \ln \|f\|_{\infty} > \rho_0 > \phi(+\infty, 0)=0$.
 \item We have the standard identities:
 \begin{align} {\partial \phi\over \partial \xi} &= - {\Ent((T_t f)^{1\over\xi})\over \EE[(T_t f)^{1\over\xi}]}\label{eq:hca_2}\\
    {\partial \phi\over \partial t} &= %\begin{cases} 0, & \xi=1\\
    		- {\mate(T_t f, (T_t f)^{{1\over \xi}-1})\over    \EE[ (T_t f)^{1\over \xi}]}\,.% & \xi \neq1
%		\end{cases}
\end{align}
 \item Since $f$ is non-constant, so is $T_t f$ for all $t\ge 0$ (for otherwise $f-\EE[f]$ is in the kernel of $T_t$).
 Therefore, ${\partial \phi\over \partial \xi} < 0$ for all $(\xi,t)$. Thus, for any $t \in[0,t_0]$ there is at most one
 solution $\xi$ of
 \begin{equation}\label{eq:hca_3}
 	\phi(t, \xi(t)) = \phi\left(0, {1\over p_0}\right) = n\rho_0.
\end{equation}
 $\xi(t)$ is simply a parametrization of the level-set of $\phi$.
 It is clear that $\xi(t)$ is non-increasing. Since fibers~\eqref{eq:hca_7} are singletons, we also conclude that
 $\xi(t)$ is strictly decreasing. From implicit function theorem and ${\partial \phi\over \partial \xi} \neq 0$, we
 infer that solution $\xi(t)$ of~\eqref{eq:hca_3} is $\matc^\infty$-smooth.
 \apxonly{Here I assumed $\xi(0)<1$. Similarly,
 $t\mapsto \xi(t)$ is strictly increasing if $\xi(0)>1$.}
 \item As we mentioned $\xi \mapsto \phi(t, \xi)$ is convex and strictly decreasing. Furthermore, it is strictly convex for $t>0$. From this convexity
 and~\eqref{eq:hca_2} we infer the following important consequences:
 	\begin{align} r &\mapsto {\Ent(f^r)\over \EE[f^r]} \quad \mbox{is increasing in $r\in(0,\infty)$; strictly unless
	$f=c1_S$}\label{eq:hca_4}\\
	   {\Ent(f^r)\over \EE[f^r]} &\ge {\ln \|f\|_{r} - \ln \|f\|_1\over 1-{1\over r}}\,.\qquad r> 1\,.\label{eq:hca_5}
\end{align}	
 \end{itemize}
 We now set $p(t) = {1\over \xi(t)}$, where $\xi(t)$ was found from solving~\eqref{eq:hca_3}.
 From observations after~\eqref{eq:hca_3} we already know that $t\mapsto p(t)$ is well-defined, strictly increasing and
 $\matc^\infty$-smooth. The fact that $p(t)$ is surjective follows from $\xi(t)\to0$ as $t\to t_0$.

 It remains to show~\eqref{eq:hca_1}. This follows from differentiating~\eqref{eq:hca_3}:
 $$ \xi'(t) = -{\mate(t)\over E(t)} \,,$$
 where we defined
 \begin{align} \mate(t) &\eqdef {1\over n} {\mate(T_t f, (T_t f)^{p(t)-1})\over    \EE[ (T_t f)^{p(t)}]}\\
    E(t) &\eqdef {1\over n}{\Ent((T_tf)^{p(t)})\over \EE[(T_tf)^{p(t)}]}\,.\label{eq:hca_9}
\end{align}
 From $(p,\Phi_p)$-LSI we get then
 \begin{equation}\label{eq:hca_8}
 	\xi'(t) \le - {b_{p(t)}(E(t))\over E(t)}\,.
\end{equation}
 Here we arrived at a key new step. Note that without further information about $E(t)$ we can only bound (due
 to convexity of $b_p(\cdot)$) the right-hand side of the above by $-\left. {d\over d s}\right|_{s=0} b_{p(t)}(s)$,
 which would result in a standard, i.e. $\rho_0$-independent, hypercontractivity such as~\eqref{eq:bonami}.
 To improve it, we need to lower-bound $E(t)$ away from $0$.
 Note that from~\eqref{eq:hca_5} we know that $E(0) \ge {\rho_0\over 1-p_0^{-1}}$. To extend this to other times
 we use~\eqref{eq:hca_5} coupled with the fact that $\xi(t)$ is precisely the level-set of $\phi$. Hence, we get
 \begin{equation}\label{eq:hca_6}
 	E(t) \ge {1\over n}{\phi(t,\xi(t))\over 1-\xi(t)} = {\rho_0\over 1-\xi(t)}
\end{equation}
 From convexity of $b_p(\cdot)$, the function $b_p(E)\over E$ is increasing in $E$ and so we can further upper-bound
 $\xi'(t)$ via~\eqref{eq:hca_6} and replacing $\xi(t)$ with $1\over p(t)$ as
 $$ \xi'(t) \le - {b_{p(t)}({p(t)\rho_0\over p(t)-1})\over {p(t)\rho_0\over p(t)-1}}\,.$$
 Noticing that $\xi'(t) = -{p'(t)\over p^2(t)}$ we get~\eqref{eq:hca_1}.
\end{proof}

\apxonly{
\subsection{Alternative estimates and remarks}

\begin{enumerate}
\item Another idea of Alex was to notice that monotonizing $f$ increases $\|T_t f\|_p$ and thus $\Ent(T_t f)$:
	\begin{align} \|T_t f\|_p &\le \|T_t f_{mono}\|_p, \qquad p>1\\
		\|T_t f\|_p &\ge \|T_t f_{mono}\|_p, \qquad p<1\\
	   \Ent(T_t f) &\le \Ent(T_t f_{mono})
	\end{align}
	Then, he asked to what extent we can improve~\eqref{eq:hcx_1} for monotone $f$. The answer is: not much, e.g.
	take $f$ to be the indicator of a subcube of dimension $Rn$, that already gets something like
		$$ \Ent(T_t f) = n(1-R) (\ln 2- h(1-e^{-t}/2)) + nR \rho_{ent}(t) $$
\item Reverse HC: We can get reverse HC with $\xi(0)>1$ and
 $$ \xi'(t) \ge {b_{1\over\xi(t)}(E(t, \xi(t)))\over E(t,\xi(t))}\,,$$
 where $E(t,\xi) = {\Ent((T_tf)^{1\over \xi})\over \EE[(T_t f)^{1\over \xi}]}$. So we are only lacking a good lower
 bound on $E(t,\xi)$.
\end{enumerate}
}% apxonly

\section{New LSIs and hypercontractivity for the hypercube}\label{sec:hyper}

The fact that we can compare~\cite{Stroock84,Varopoulos85} Dirichlet forms
$\mate(f, f^{p-1})$ with $\mate(f^{p\over 2}, f^{p\over 2})$ immediately leads to the conclusion that for any
reversible semigroup (i.e. $T_t^* = T_t$ in $L_2(\pi)$) we have
\begin{align} b_p(x) &\ge {4|p-1|\over p^2} b_2(x) & \forall x\ge 0\, \forall
p\in(-\infty,\infty)\setminus\{1\}\label{eq:lsi_pto2}\\
   b_p(x) &\ge {1-p\over p^2} b_1(x) & \forall x\ge 0 \, \forall p<1\,.
\end{align}
(see, e.g.,~\cite{bakry1994hypercontractivite} for $p=2$ and~\cite{MOS12} for $p=1$).
Thus, we can get non-trivial non-linear $p$-LSIs by only establishing $p=1,2$ cases (of which $p=2$ was already done
in~\cite{samorodnitsky2008modified}). However, we can also find the sharpest non-linear LSIs for all $p$ explicitly,
which is what we proceed to do.

\begin{theorem}[1-LSI for the hypercube] For all $f:\{0,1\}^n \to (0,\infty)$ with $\EE[f]=1$ we have
\begin{equation}\label{eq:blsi_1}
	{1\over n} \mate(f,\ln f) \ge b_1\left({1\over n}\Ent(f)\right)\,,
\end{equation}
where Dirichlet form is given by~\eqref{eq:diri_cube}, all expectations and $\Ent$ are with respect to uniform
probability measure on $\{0,1\}^n$ and $b_1:[0,\ln 2) \to [0,\infty)$ is a convex increasing function given by
\begin{equation}\label{eq:blsi_2}
	b_1(\ln 2-h(y))= \left(\tfrac{1}{2}-y\right) \ln {1-y\over y}\,, \qquad y\in(0,1/2]\,,
\end{equation}
where $h(\cdot)$ is the binary entropy function~\eqref{eq:hbin_def}.
\end{theorem}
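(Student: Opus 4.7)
The plan is to prove the inequality first for $n=1$ (where it will hold with equality) by direct computation, verify convexity of $b_1$, and then obtain the general $n$ case via tensorization (Theorem~\ref{th:tenso}).

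For $n=1$, by scale invariance I may assume $\EE[f]=1$; parametrize $f:\{0,1\}\to(0,\infty)$ by $f(0) = 2(1-y)$, $f(1) = 2y$ for some $y \in (0, 1/2]$ (the case $y > 1/2$ is symmetric). A direct computation gives $\Ent(f) = \ln 2 - h(y)$, and from~\eqref{eq:diri_cube},
$$\mate(f, \ln f) = \tfrac14 (f(0)-f(1))(\ln f(0) - \ln f(1)) = \bigl(\tfrac12 - y\bigr)\ln\frac{1-y}{y}.$$
By the defining relation~\eqref{eq:blsi_2}, this is exactly $b_1(\Ent(f))$, so~\eqref{eq:blsi_1} holds with equality at $n=1$; in particular, the single-bit $(1,\Phi_1)$-LSI holds sharply with $\Phi_1 := b_1^{-1}$.

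To invoke Theorem~\ref{th:tenso}, I need $\Phi_1$ to be concave, equivalently $b_1$ convex and increasing on $[0,\ln 2)$. Using the parametrization $x = \ln 2 - h(y)$ (so $dx/dy = -\ln\frac{1-y}{y}$), the chain rule gives
$$ b_1'(x) = 1 + \frac{\tfrac12 - y}{y(1-y)\ln\frac{1-y}{y}}.$$
After the substitution $s = 1-2y \in (0,1)$, the fraction becomes $\frac{s}{(1-s^2)\,\mathrm{arctanh}(s)}$, and monotonicity of $b_1'$ in $x$ reduces to showing this is increasing in $s$, or equivalently to the elementary inequality $\mathrm{arctanh}(s) > \frac{s}{1+s^2}$ on $(0,1)$. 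This last step follows because the difference vanishes at $s=0$ and has derivative $\frac{4s^2}{(1-s^2)(1+s^2)^2} > 0$.

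With the single-bit $(1,\Phi_1)$-LSI established and $\Phi_1$ shown to be concave, Theorem~\ref{th:tenso} immediately extends the inequality to arbitrary $n$, yielding~\eqref{eq:blsi_1}. The main obstacle is the convexity check in the previous paragraph; it is elementary but requires careful bookkeeping through the change of variables and the calculus estimate noted above.
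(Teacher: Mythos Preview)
Your proof is correct and follows essentially the same route as the paper: reduce to $n=1$ via tensorization, verify the identity there by direct parametrization, and check that $b_1$ is convex and increasing. Your convexity argument via the substitution $s=1-2y$ and the inequality $\mathrm{arctanh}(s)>s/(1+s^2)$ is equivalent to the paper's substitution $z=(1-y)/y$ and the inequality $\ln z\ge (z^2-1)/(z^2+1)$; note also that $b_1'>0$ (hence $b_1$ increasing) is immediate from your formula $b_1'(x)=1+\text{(positive)}$.
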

\begin{proof} This result follows from Theorem~\ref{th:lsi_cube} (below) upon taking $p\to 1+$.
\apxonly{By Theorem~\ref{th:tenso}, we only need to work out the case $n=1$. Then, the space of all $f$ with $f(1)
\ge f(0)$ (which is assumed without loss of generality) can be parameterized by $f(0)=2-f(1)=2y$ with $y\in(0,1/2]$. With such a choice we
have $\Ent(f)=\ln 2 - h(y)$ and $\mate(f,\ln f)=({1\over 2}-y) \ln {1-y\over y}$. Thus, we only need to verify that
$b_1$ is increasing and convex. Monotonicity follows from the fact that both factors in the RHS of~\eqref{eq:blsi_2} are
decreasing in $y$. For convexity, we have
\begin{align}
b'(\ln 2 - h(y)) &= 1 - \frac{1-2y}{2y(1-y) \ln \frac{y}{1-y}} \\
b''(\ln 2 - h(y)) &= -\frac{1}{2 \ln \frac{y}{1-y}} \cdot \frac{d}{dy} \frac{1-2y}{y(1-y) \ln \frac{y}{1-y}}
\end{align}
Since $\ln \frac{y}{1-y} < 0$, it suffices to prove $\frac{d}{dy} \frac{1-2y}{y(1-y) \ln \frac{y}{1-y}} \ge 0$. Computing the derivative and rearranging, we need to show $\ln \frac{1-y}{y} \ge \frac{1-2y}{1 - 2y + 2y^2}$.
Substituting $z = \ln \frac{1-y}{y}$ this is the same as
\begin{equation}\label{eq:bpp1}
\ln z \ge \frac{z^2-1}{z^2 + 1}
\end{equation}
for $z \ge 1$.  Both sides of~\eqref{eq:bpp1} vanish at $z = 1$. Comparing the derivatives, we need to verify $1/z \ge
\frac{4z}{\left(z^2 + 1\right)^2}$, which is the same as $\left(z^2 + 1\right)^2 \ge 4z^2$ and hence is true.}
\end{proof}

\begin{corollary} Classical MGL~\eqref{eq:mgl_1} holds.
\end{corollary}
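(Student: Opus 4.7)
The plan is to invoke Theorem~\ref{th:mgl} with the explicit $b_1$ just established and to verify that the resulting differential equation $\tilde\rho'(t) = -b_1(\tilde\rho(t))$ admits a closed-form solution matching the function $\ln 2 - m(t,\cdot)$ appearing in~\eqref{eq:mgl_1}. The convexity and monotonicity of $b_1$, together with the fact that $b_1$ is defined on $[0,\ln 2)$ (which contains every possible value of $\frac{1}{n}\Ent(f)/\EE[f]$ on $\{0,1\}^n$), ensure that the hypotheses of Theorem~\ref{th:mgl} are met on a maximal interval $[0,\infty)$, so only the ODE needs to be solved.

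The key computational step is a change of variables. Parameterize $\tilde\rho(t) = \ln 2 - h(y(t))$ with $y(t)\in(0,1/2]$, which is legitimate since $y\mapsto \ln 2 - h(y)$ is a bijection from $(0,1/2]$ onto $[0,\ln 2)$. Using $h'(y)=\ln\frac{1-y}{y}$ on one side and~\eqref{eq:blsi_2} on the other, the ODE $\tilde\rho'=-b_1(\tilde\rho)$ reduces to the linear first-order equation
\begin{equation}
	y'(t) = \tfrac{1}{2} - y(t),
\end{equation}
whose explicit solution is $y(t) = y(0)e^{-t} + \tfrac{1}{2}(1-e^{-t}) = y(0) \ast \tfrac{1-e^{-t}}{2}$, recognizing the binary convolution in the last step.

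To finish, set the initial condition $\tilde\rho(0) = \frac{1}{n}\Ent(f)/\EE[f]$, which corresponds to $y(0) = h^{-1}\bigl(\ln 2 - \frac{1}{n}\Ent(f)/\EE[f]\bigr)$. Theorem~\ref{th:mgl} then gives
\begin{equation}
	\tfrac{1}{n}\Ent(T_t^{\otimes n} f)/\EE[f] \le \tilde\rho(t) = \ln 2 - h\!\left(h^{-1}\!\left(\ln 2 - \tfrac{1}{n}\tfrac{\Ent(f)}{\EE[f]}\right) \ast \tfrac{1-e^{-t}}{2}\right),
\end{equation}
which is exactly~\eqref{eq:mgl_1}.

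There is no real obstacle; the only thing to double-check is that Theorem~\ref{th:mgl} applies globally, i.e.\ that $\tilde\rho$ exists on $[0,\infty)$. This follows because $y(t)\nearrow 1/2$ stays in $(0,1/2]$ for all $t\ge 0$, so $\tilde\rho$ remains in the domain of $b_1$, and because $b_1$ is strictly increasing and convex on $[0,\ln 2)$ by the theorem. The only subtle step worth flagging is the change of variables itself: one must verify that $h(y(t))$ is differentiable in $t$ (it is, since $y(0)>0$ forces $y(t)\in(0,1/2]$ and $h$ is smooth away from the endpoints), so the chain rule is justified.
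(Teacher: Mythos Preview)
Your proof is correct and follows essentially the same approach as the paper: invoke Theorem~\ref{th:mgl} with the $b_1$ from~\eqref{eq:blsi_2} and verify that $\tilde\rho(t)=\ln 2 - m(t,\cdot)$ solves $\tilde\rho'=-b_1(\tilde\rho)$. You simply spell out the verification via the substitution $\tilde\rho=\ln 2 - h(y)$, whereas the paper leaves that one-line check to the reader.
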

\begin{proof} Since $\Ent(f) \le n\EE[f] \ln 2$, we can define $\tilde\rho(t)=\ln2-m\left(t, {\Ent(f)\over n
\EE[f]}\right)$, where $m(\cdot,\cdot)$ was defined after~\eqref{eq:mgl_1}. A calculation shows that $\tilde\rho(t)$
solves $\tilde\rho(t)' = -b_1(\tilde\rho(t))$ with $b_1$ from~\eqref{eq:blsi_2}. Since $\tilde\rho(0) \ge {1\over
n}{\Ent(f)\over \EE[f]}$, application of Theorem~\ref{th:mgl} completes the proof.
\end{proof}

Next, we proceed to LSI's with $p\neq 1$.
\begin{theorem}[$p$-LSI for the hypercube]\label{th:lsi_cube} Fix $p\in(-\infty,\infty)\setminus\{0,1\}$.
For all $f:\{0,1\}^n \to \mreals_+$  with $\EE[f^p]=1$ (and $f>0$ if $p<1$) we have
\begin{equation}\label{eq:blsi_p}
	{1\over n} \sign(p-1) \mate(f,f^{p-1}) \ge b_p\left({1\over n}\Ent(f^p)\right)\,,
\end{equation}
where the Dirichlet form is given by~\eqref{eq:diri_cube}, all expectations and $\Ent$ are with respect to uniform
probability measure on $\{0,1\}^n$ and $b_p:[0,\ln 2] \to [0,\infty)$ is a convex increasing function given by
\begin{align} b_p(\ln 2 - h(y)) &= {\sign(p-1)\over 2} \left(1 -  y^{{1\over p}} (1-y)^{1-{1\over p}} - y^{1-{1\over
p}} (1-y)^{{1\over p}}\right)\,,
\end{align}
with $0 < y \le {1\over 2}$, and $h(\cdot)$ being the binary entropy function~\eqref{eq:hbin_def}.
\end{theorem}
\begin{remark} Recall that the proof of~\eqref{eq:lsi_gross} in~\cite{LG75} for Ornstein-Uhlenbeck semigroup
was done by first deriving the LSI for the hypercube
and then applying the CLT. Since we derive a better LSI for the hypercube, will we get a better LSI for the
Ornstein-Uhlenbeck? The answer is negative since in the CLT limit we would have $\Ent(f^p)=O(1)$ as $n\to\infty$ and
hence the argument of $b_p$ in~\eqref{eq:blsi_p} converges to $0$ and we get the linear LSI in the limit. In fact, as
noted above,~\eqref{eq:lsi_gross} is tight.
\end{remark}
\begin{proof}By Theorem~\ref{th:tenso}, we only need to work out the case $n=1$. Then, the space of all $f$ can be
parameterized by $f(0)=(2y)^{1\over p}, f(1)=(2-2y)^{1\over p}$ with $y\in[0,1/2]$. Thus we only need verify monotonicity
and convexity.

First, consider the case $p>1$. Let $q={p\over p-1}$. Taking the first derivative, we get
\begin{multline}
b'_p(\ln 2 - h(y)) = -\frac12  \frac{1}{\ln \frac{y}{1-y}} \cdot \bigg(\frac 1p \left(\left(\frac{1-y}{y}\right)^{1/q} -
\left(\frac{y}{1-y}\right)^{1/q}\right) \\{} + \frac 1q \left(\left(\frac{1-y}{y}\right)^{1/p} -
\left(\frac{y}{1-y}\right)^{1/p}\right)\bigg)\label{eq:bplsi_1}
\end{multline}
From here, monotonicity of $b_p$ follows from the fact that the RHS is positive (${1-y\over y} > {y\over 1-y}$). We
proceed to showing convexity.
Let $z = \frac{y}{1-y}$. Then $0 < z \le 1$ and, taking another derivative, we have
\begin{multline}
b''_p(\ln 2 - h(y)) = -\frac12 \frac{1}{(1-y)^2} \cdot  \frac{1}{\ln z} \cdot \frac{d}{dz} \bigg[\frac{1}{\ln z} \cdot
\bigg(\frac 1p \left(\left(\frac{1}{z}\right)^{1/q} - z^{1/q}\right) \\ {} + \frac 1q
\left(\left(\frac{1}{z}\right)^{1/p} - z^{1/p}\right)\bigg)\bigg]
\end{multline}
Since $\ln z < 0$ for $z < 1$, it would suffice to argue that the derivative w.r.t. $z$ on RHS is nonnegative. Let $r(z) = \frac 1p \left(\left(\frac{1}{z}\right)^{1/q} - z^{1/q}\right) + \frac 1q \left(\left(\frac{1}{z}\right)^{1/p} - z^{1/p}\right)$. We need to show $z \ln \frac 1z \cdot \left(-r'\right) \ge r$.

Making another substitution of variables, let $w = \ln z$, that is $-\infty < w \le 0$. Let $t(w) = r(z) = r\left(e^w\right)$. Substituting and simplifying, we need to show $w t'(w) \ge t(w)$.

We have $t(w) = \frac 1p \left(e^{-w/q} - e^{w/q}\right) + \frac 1q \left(e^{-w/p} - e^{w/p}\right)$. Hence
\[
t'(w) = - \frac{1}{pq} \left(e^{-w/q} + e^{w/q} + e^{-w/p} + e^{w/p}\right) \quad \mbox{and}
\]
\[
\quad t''(w) = - \frac{1}{pq} \left(-\frac 1q e^{-w/q} + \frac 1q e^{w/q} -\frac 1p e^{-w/p} + \frac 1p e^{w/p}\right)
\]
In particular, $t$ is a decreasing convex function on $(-\infty,0]$ which vanishes at $0$, and $w t'(w) \ge t(w)$ is satisfied.

Next, consider the case $0 < p < 1$. We repeat the computation above, multiplying throughout by $-1 = \sign(p-1)$. Since
in this case $q < 0$, the sign change cancels out, and the convexity argument, with minor changes as needed, goes
through. For monotonicity, observe that again the signs of both terms in the RHS of~\eqref{eq:bplsi_1} are negative (the
front $-$ sign is canceled by $\sign(p-1)$).

Finally, for the case $p<0$, observe that we can set $g=f^{p-1}$ and apply the already proven inequality to the pair
$(g,{p\over p-1})$ since ${p\over p-1} \in (0,1)$.

\end{proof}

Our chief goal is to derive hypercontractivity inequality tighter than~\eqref{eq:bonami} for functions with small
support. We will replace the constraint on the support size $|\supp f| \le 2^{nR}$ with an analytical proxy:
$$ \|f\|_{p_0} \ge e^{n \rho_0} \|f\|_1\,, \quad \rho_0 = (1-p_0^{-1})(1-R)\ln 2\,,$$
as discussed in~\eqref{eq:hca_p}. We get the following result:

\begin{theorem}\label{th:hcb} Fix $1<p_0 < \infty$ and $0\le \rho_0 \le(1-p_0^{-1})\ln2$. Then the
differential equation
\begin{equation}\label{eq:hcb_0}
	u'(t) = C\left(\rho_0(1+e^{-u(t)})\right)\,, \quad C(\ln 2 - h(y)) = {2-4\sqrt{y(1-y)} \over \ln 2
- h(y)}
\end{equation}
has a unique solution on $[0,\infty)$ with $u(0)=\ln(p_0-1)$. Furthermore, for any $f:\{0,1\}^n \to \mreals_+$ with $\|f\|_{p_0} \ge e^{n\rho_0}
\|f\|_1$ we have
\begin{equation}\label{eq:hcb_1}
	\|T_t^{\otimes n} f\|_{p(t)} \le \|f\|_{p_0}\,, \qquad p(t) = 1+e^{u(t)}\,.
\end{equation}
\end{theorem}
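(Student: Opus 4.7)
The plan is to feed the hypercube-specific $b_2$ from Theorem~\ref{th:lsi_cube} into the general hypercontractivity trajectory of Theorem~\ref{th:hca}, reduce $b_{p}$ to $b_2$ via Stroock--Varopoulos~\eqref{eq:lsi_pto2}, and then compare the resulting differential inequality with the autonomous ODE~\eqref{eq:hcb_0} by a standard supersolution argument.

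By scale invariance assume $\EE[f]=1$. Applying Theorem~\ref{th:hca} with $\rho_0':=\tfrac1n\ln\|f\|_{p_0}\ge \rho_0$, one obtains a smooth, strictly increasing $p^*(t)$ on $[0,t_0)$ with $p^*(0)=p_0$ and $\|T_t^{\otimes n}f\|_{p^*(t)}=\|f\|_{p_0}$, satisfying
$$(p^*)'(t)\;\ge\;\frac{p^*(p^*-1)}{\rho_0'}\, b_{p^*}\!\left(\frac{p^*\rho_0'}{p^*-1}\right)\;\ge\;\frac{4(p^*-1)^2}{p^*\rho_0'}\, b_{2}\!\left(\frac{p^*\rho_0'}{p^*-1}\right),$$
the second inequality being~\eqref{eq:lsi_pto2} with $p=p^*>1$. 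Set $u^*(t):=\ln(p^*(t)-1)$; then $p^*=1+e^{u^*}$, $(u^*)'=(p^*)'/(p^*-1)$ and $p^*\rho_0'/(p^*-1)=\rho_0'(1+e^{-u^*})$, so the bound becomes $(u^*)'(t)\ge 4\,b_2(x)/x$ evaluated at $x=\rho_0'(1+e^{-u^*(t)})$. Using Theorem~\ref{th:lsi_cube} at $p=2$, namely $b_2(\ln 2-h(y))=\tfrac12(1-2\sqrt{y(1-y)})$, a direct computation identifies $4\,b_2(x)/x$ with the function $C(x)$ of~\eqref{eq:hcb_0}. Since $b_2$ is convex with $b_2(0)=0$, the chord slope $b_2(x)/x$ is nondecreasing in $x$, hence so is $C$; combining this with $\rho_0'\ge\rho_0$ yields the weaker supersolution property $(u^*)'(t)\ge C(\rho_0(1+e^{-u^*(t)}))$.

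It remains to produce the global solution $u$ and to compare. The hypothesis $\rho_0\le(1-1/p_0)\ln 2$ ensures $\rho_0(1+e^{-u})\le \ln 2$ whenever $u\ge \ln(p_0-1)$, so $C$ is always evaluated on its natural domain $[0,\ln 2]$. A Taylor expansion near $y=1/2$ (using $\ln 2-h(y)\sim 2(\tfrac12-y)^2$ and $2-4\sqrt{y(1-y)}\sim 4(\tfrac12-y)^2$) shows $C$ extends smoothly with $C(0)=2$ and is bounded, locally Lipschitz on $[0,\ln 2]$; Picard--Lindel\"of then yields a unique $\matc^1$-solution $u(t)$ on $[0,\infty)$, monotonically increasing since $C>0$. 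Standard ODE comparison gives $u^*\ge u$ on $[0,t_0)$, hence $p^*(t)\ge p(t):=1+e^{u(t)}$, and monotonicity of $L^p$-norms on a probability space delivers $\|T_t^{\otimes n}f\|_{p(t)}\le\|T_t^{\otimes n}f\|_{p^*(t)}=\|f\|_{p_0}$ for $t\in[0,t_0)$. For $t\ge t_0$ the bound follows from $\|T_t^{\otimes n}f\|_{p(t)}\le \|T_t^{\otimes n}f\|_\infty\le \|T_{t_0}^{\otimes n}f\|_\infty = \|f\|_{p_0}$ combined with the non-increase of $\|T_t^{\otimes n}f\|_\infty$ in $t$. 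The main obstacle is the ODE comparison, which requires verifying local Lipschitzness of $C$ at the endpoint $x=0$ and carefully tracking that the trajectory of $u^*$ stays in the allowed range; once these are in place, the algebraic identification $4\,b_2(x)/x=C(x)$ and the change of variables $u=\ln(p-1)$ are routine.
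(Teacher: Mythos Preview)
Your proof is correct and follows essentially the same route as the paper's: invoke Theorem~\ref{th:hca}, reduce $b_p$ to $b_2$ via~\eqref{eq:lsi_pto2}, identify $4b_2(x)/x$ with $C(x)$, substitute $u=\ln(p-1)$, and finish with an ODE comparison (the paper cites a Chaplygin-type theorem where you cite Picard--Lindel\"of plus a supersolution argument). You are in fact slightly more explicit than the paper in distinguishing the parameter $\rho_0$ from the actual $\rho_0'=\tfrac1n\ln\|f\|_{p_0}$ and in treating $t\ge t_0$; conversely, the paper singles out the boundary case $\rho_0=(1-p_0^{-1})\ln 2$ (where the argument of $C$ reaches $\ln 2$ at $t=0$), whereas your Lipschitz discussion only addresses the endpoint $x=0$.
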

\begin{remark} Ref.~\cite{samorodnitsky2008modified} showed that $C:[0,\ln2]\to[2, 2/\ln2]$ is a smooth,
convex and strictly increasing bijection. Consequently, the function $p(t)$ in~\eqref{eq:hcb_1}  is smooth and satisfies
$$ p(t) > 1+(p_0-1)e^{2t} \qquad \forall t>0 $$
thereby strictly improving the hypercontractivity inequality~\eqref{eq:bonami}. %(for $p_0=2$ this improvement was
%already observed in~\cite{samorodnitsky2008modified}).
Furthermore, it satisfies
\begin{align} p(t) &= p_0 + p'(0) t + {1\over 2} p''(0) t^2 + o(t^2), \qquad t\to0\,, \label{eq:hcb_3}\\
     p'(0) &= (p_0-1) C(x_0)\label{eq:hcb_3a}\\
     p''(0) &= (p_0-1)\left( C(x_0)^2 - C'(x_0) C(x_0) {x_0\over p_0}\right)\\
     x_0 &= {\rho_0 p_0\over p_0-1}\,.\label{eq:hcb_3c}
\end{align}
\apxonly{Using convexity of $C(x)$ we have $C(x) \ge C(x_0) + (x-x_0)C'(x_0)$ and thus we also get the firm bound
$$ p(t) \ge p_0 + p'(0) t + {1\over 2} p''(0) t^2, \qquad \forall t\ge 0\,.$$}
\end{remark}
\begin{remark} Our estimate is locally optimal at $t=0$ in the following sense: for every $q(t)$ such that
$q(0)=p_0$ and $q'(0)>p'(0)$ there exists a function $f$ with $\|f\|_{p_0}\ge e^{n\rho_0} \|f\|_1$ and
$ \|T_t f\|_{q(t)} > \|f\|_{p_0} $
for a sequence of $t\to 0$. This follows from the fact that had a counter-example $q(t)$ existed, it would imply that
the second half of the proof of Theorem~\ref{th:uncert} (see below) could be improved to contradict the first half.
\end{remark}

\begin{proof} First, notice that $C(x) = {4b_2(x)\over x}$, where $b_2$ was defined in Theorem~\ref{th:lsi_cube}. Let
$p_1(t)$ be the function defined by
$$ \|T_t^{\otimes n} f\|_{p_1(t)} = \|f\|_{p_0}\,.$$
Theorem~\ref{th:hca} showed this function to be smooth and growing faster than~\eqref{eq:hca_1}.
From~\eqref{eq:hca_1} and using~\eqref{eq:lsi_pto2} to lower-bound $b_p(\cdot)$ via $b_2(\cdot)$ we get that
$$ p_1'(t) \ge (p_1(t)-1) C\left(p_1(t)\rho_0\over p_1(t)-1\right)\,,$$
or introducing $u_1(t) = \ln(p_1(t) - 1)$ that
$$ u_1'(t) \ge C\left(\rho_0(1+e^{-u_1(t)})\right)\,.$$
The case of $\rho_0 = (1-p_0^{-1})\ln 2$ corresponds to $f$ supported on a single point and can be dealt with
separately. So we assume $\rho_0 < (1-p_0^{-1})\ln 2$, in which case the map
$$ u \mapsto C\left(\rho_0(1+e^{-u})\right) $$
is smooth on some interval $(\ln(p_0-1) - \epsilon, \infty)$. Consequently,~\eqref{eq:hcb_0} possesses a unique solution
with $u(0)=\ln(p_0-1)$ and a Chaplygin-type theorem, e.g.~\cite[Theorem
4.1]{hartman2002ordinary}, implies
$$ u_1(t) \ge u(t)\,. $$
\apxonly{Commented out explicit argument.}
%
%So it suffices to show that $u(t)$ introduced in Theorem statement lower-bounds $u(t)$, i.e.\footnote{What we show below
%is the fact that for $G\ge c > 0$ any function satisfying differential inequality
%$$ {df\over dx} \ge G(f(x)) $$
%is lower-bounded by the solution of the corresponding differential equation.}
%\begin{equation}\label{eq:hcb_2}
%	u_1(t) \ge u(t)\,.
%\end{equation}
%
%Notice that since $\inf_{x\in[0,\ln2]} C(x) = \alpha_2>0$,
%cf.~\eqref{eq:plsi}, we know that both $u(t)$ and $u_1(t)$ are strictly increasing. Introduce two
%inverse functions: $\theta(u_0) = u^{-1}(u_0)$ and $\theta_1(u_0) = u_1^{-1}(u_0)$ defined for $u_0 \ge \ln(p_0-1)$.
%From implicit function theorem we know that $\theta$ and $\theta_1$ are also smooth and strictly increasing.
%Differentiating in $x$ identities
%$$ u_1(\theta_1(x)) = x\,, \qquad u(\theta(x)) = x\,,$$
%we get
%$$ \theta'(x) = {1\over F(x)}, \qquad 0<\theta_1'(x) \le {1\over F(x)} = \theta'(x)\,,$$
%where $ F(x) \eqdef C(\rho_0 (1+e^{-x}))$.
%Since $\theta(\ln(p_0-1)) = \theta_1(\ln(p_0-1))=0$ we get that for all $x>\ln(p_0-1)$:
%$$ \theta_1(x) = \int_{\ln(p_0-1)}^x \theta_1'(y) dy \le  \int_{\ln(p_0-1)}^x \theta'(y) dy = \theta(x)\,.$$
%Clearly, $\theta_1 \le \theta$ is equivalent to~\eqref{eq:hcb_2}.
\end{proof}

For $p_0=2$, we also prove an alternative estimate on $p(t)$ via a method tailored to the hypercube.
\begin{theorem}\label{th:hcc} In the setting of Theorem~\ref{th:hcb} assume $p_0=2$. Then~\eqref{eq:hcb_1} holds with
$p(t)$ given as
\begin{align} p(t) &= 1 + e^{\int_0^t C(\tilde \rho(s)\vee 0)ds}\\
   \tilde \rho(s) &= 2\rho_0 - \ln\left(\frac{2}{1+e^{-2s}}\right) \,.
\end{align}
\end{theorem}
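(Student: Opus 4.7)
The plan is to follow the framework from the proof of Theorem~\ref{th:hcb}: define $p_1(t)$ implicitly by $\|T_t^{\otimes n} f\|_{p_1(t)} = \|f\|_2$, aim to show $p_1(t) \ge p(t)$, and then conclude via monotonicity of $L^p$-norms in $p$ that $\|T_t^{\otimes n}f\|_{p(t)} \le \|T_t^{\otimes n}f\|_{p_1(t)} = \|f\|_2$. From Theorem~\ref{th:hca} combined with the Stroock-Varopoulos bound $b_{p_1}\ge \tfrac{4(p_1-1)}{p_1^2}b_2$ and the identification $C(x) = 4 b_2(x)/x$, we inherit the differential inequality
\[
    p_1'(t) \,\ge\, (p_1(t)-1)\,C\!\left(E(t)\right), \qquad E(t) \,=\, \frac{\Ent((T_tf)^{p_1(t)})}{n\,\EE[(T_tf)^{p_1(t)}]},
\]
so the entire task reduces to producing a sharper lower bound on $E(t)$ than the Hölder-based $\rho_0\, p_1/(p_1-1)$ used for Theorem~\ref{th:hcb}.

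The key new ingredient is a \emph{reverse $L^2$-estimate} for non-negative inputs on the cube:
\[
    \|T_t^{\otimes n} f\|_2^{\,2} \,\ge\, \left(\tfrac{1+e^{-2t}}{2}\right)^n \|f\|_2^{\,2}, \qquad f:\{0,1\}^n\to\mreals_+,
\]
which is sharp, attained by the indicator of a singleton. I would prove this first for $n=1$ via the parametrization $f(0)=u+v$, $f(1)=u-v$ with $u\ge|v|$ (non-negativity), from which $\|T_tf\|_2^2/\|f\|_2^2 = (u^2 + e^{-2t}v^2)/(u^2+v^2)$ and the claim reduces to the elementary $(u^2-v^2)(1-e^{-2t})\ge 0$. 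The general $n$ case follows by iterating single-coordinate applications of $T_t$, using at each stage that $T_t$ preserves non-negativity.

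With the lemma in hand, the proof closes quickly. Since $p_1(0)=2$ and $p_1$ is strictly increasing (Theorem~\ref{th:hca}), one has $p_1(t)\ge 2$; the monotonicity~\eqref{eq:hca_4} of $r\mapsto \Ent(g^r)/\EE[g^r]$ applied to $g=T_tf$ then gives $E(t)\ge \Ent((T_tf)^2)/(n\,\EE[(T_tf)^2])$. Combining the Hölder-type inequality~\eqref{eq:hca_5} at $r=2$ (using $\|T_tf\|_1=\|f\|_1$) with the reverse $L^2$-estimate yields, after normalizing $\|f\|_1=1$,
\[
    E(t) \,\ge\, \tfrac{2}{n}\ln\|T_tf\|_2 \,\ge\, 2\rho_0 - \varphi(t) \,=\, \tilde\rho(t), \qquad \varphi(s)=\ln\tfrac{2}{1+e^{-2s}},
\]
while trivially $E(t)\ge 0$, hence $E(t)\ge \tilde\rho(t)\vee 0$. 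Since $C$ is monotone increasing, the differential inequality upgrades to $p_1'(t)\ge (p_1(t)-1)\,C(\tilde\rho(t)\vee 0)$; setting $u_1(t)=\ln(p_1(t)-1)$ and integrating from $u_1(0)=0$ gives exactly $p_1(t)\ge 1+\exp\!\bigl(\int_0^t C(\tilde\rho(s)\vee 0)\,ds\bigr)=p(t)$, as required. The only genuinely new piece of work is the reverse $L^2$-estimate; everything else is bookkeeping built on Theorem~\ref{th:hca}.
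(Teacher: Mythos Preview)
Your proof is correct and follows the same architecture as the paper's: derive the differential inequality $\tfrac{d}{dt}\ln(p_1(t)-1)\ge C(E(t))$ from Theorem~\ref{th:hca} and Stroock--Varopoulos, reduce everything to the lower bound $E(t)\ge\tilde\rho(t)\vee 0$, and obtain the latter from a reverse $L^2$-estimate $\|T_t^{\otimes n}f\|_2^2\ge\bigl(\tfrac{1+e^{-2t}}{2}\bigr)^n\|f\|_2^2$ combined with~\eqref{eq:hca_4}--\eqref{eq:hca_5}.

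The one genuine point of divergence is how you establish the reverse $L^2$-estimate. The paper writes $T_t^{\otimes n}f=\Lambda_t^{\otimes n}\ast f$ with $\Lambda_t^{\otimes n}(x)=(1-e^{-t})^{|x|}(1+e^{-t})^{n-|x|}$, uses the semigroup identity $\Lambda_t\ast\Lambda_t=\Lambda_{2t}$ to get $\|T_tf\|_2^2=\langle\Lambda_{2t}^{\otimes n},f\ast f\rangle$, and then simply drops all but the $x=0$ term (valid since $f\ast f\ge 0$ and $\Lambda_{2t}\ge 0$). Your route---direct computation for $n=1$ via $f=(u+v,u-v)$ and then tensorization through iterated single-coordinate applications of $T_t$---is equally short and perhaps more transparent as to \emph{why} non-negativity of $f$ is needed. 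The convolution argument, on the other hand, makes the sharpness at a delta function immediate and hints at generalizations to other kernels. Either way, this lemma is indeed the only new ingredient; the rest is bookkeeping, exactly as you say.
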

\begin{remark} Using convexity of $C$ we get $C(x\vee 0) \ge C(x_0) + (x-x_0)C'(x_0)$, where $x_0 = \tilde \rho(0) =
2\rho_0$ is from~\eqref{eq:hcb_3c}. Similarly, $\ln {1+e^{-2t}\over
2} \ge -t$. Therefore, altogether we get an explicit estimate:
	\begin{equation}\label{eq:hcc_firm}
		p(t) \ge 1+e^{C(x_0) t - {C'(x_0)\over 2} t^2}\,,
	\end{equation}
	 The $t^2$ term here is, however, worse than that of~\eqref{eq:hcb_3}.
\end{remark}
\begin{proof} We return to~\eqref{eq:hca_8}. Recalling that $\xi(t) = {1\over p(t)}$ and lower-bounding $b_p$ by $b_2$
via~\eqref{eq:lsi_pto2} we get
$$ {d\over dt}\ln(p(t)-1) \ge C(E(t))\,,$$
(with $E(t)$ from~\eqref{eq:hca_9}), which implies after integration
$$ p(t) \ge 1 + e^{\int_0^s C(E(s))ds}\,.$$
Since $E(t)\ge 0$ by definition it only suffices to prove \begin{equation}
E(t) \ge 2\rho_0 - \ln\left(\frac{2}{1+e^{-2t}}\right) \label{eq:hcc_2}
\end{equation}

Next, we obtain a lower bound on $\|T_t^{\otimes n} f\|_2$. To that end introduce a function $\Lambda^{\otimes n}_t$ on $\{0,1\}^n$ with the property $T_t^{\otimes n} f = \Lambda^{\otimes
n}_t \ast f$. Note that $\Lambda^{\otimes n}_t(x) = \left(1-e^{-t}\right)^{|x|} \left(1+e^{-t}\right)^{n-|x|}$, where
$|x|$ denotes the Hamming weight of $x$. Clearly $\Lambda^{\otimes n}_t \ge 0$. Since $f\ge 0$ we have
\[
\|T_t^{\otimes n} f\|_2^2 = \left<T_t^{\otimes n} f, T_t^{\otimes n} f\right> = \left<\Lambda^{\otimes n}_t \ast f, \Lambda^{\otimes n}_t \ast f\right> =
\]
\[
\left<\Lambda^{\otimes n}_t \ast \Lambda^{\otimes n}_t, f \ast f\right> = \left<\Lambda^{\otimes n}_{2t}, f \ast f\right> \ge \frac{1}{2^n} \Lambda^{\otimes n}_{2t}(0) \cdot (f\ast f)(0) = \left(\frac{1 + e^{-2t}}{2}\right)^n \cdot \|f\|^2_2
\]

To prove~\eqref{eq:hcc_2}, observe that by~\eqref{eq:hca_4}~and~\eqref{eq:hca_5} and by the preceding calculation,
\[
E(t) \ge \frac 1n \ln \frac{\|T_t^{\otimes n} f\|_2^2}{\|T_t^{\otimes n} f\|_1^2} = \frac 1n \ln \frac{\|T_t^{\otimes n}
f\|_2^2}{\|f\|_1^2} \ge \frac 1n \ln \frac{\|f\|_2^2}{\|f\|_1^2} - \ln\left(\frac{2}{1+e^{-2t}}\right) \ge \tilde\rho(t)
\]
%\[
%(1-R)\ln2 - \ln\left(\frac{2}{1+e^{-2t}}\right)
%\]

We remark that the main difference in this proof compared to Theorem~\ref{th:hcb} is in using a different idea for
lower-bounding the entropy $E(t)$. Theorem~\ref{th:hcb} essentially relied on~\eqref{eq:hca_6}.
\end{proof}

\apxonly{\subsection{Questions about exact asymptotics}
Some remarks about exact asymptotics. First, consider the following problem
$$ \sup_{f: \|f\|_{p_0} \ge e^{n\rho_0}, \EE[f]=1} \|T_t^{\otimes n} f\|_{p}  \qquad \rho_0 \le (1-p_0^{-1}) \ln 2,
p\ge p_0$$
One might hope that it tensorizes (i.e. solution is iid). But unfortunatelly, comparing iid $f$ with (scaled)
indicator of a ball gives:
\begin{enumerate}
\item for $p$ close to $p_0$ ball achieves larger spreading $\|T_t f\|_p$.
\item for $p$ far from $p_0$, iid solution is better.
\item In all cases I saw the two curves (fixed $t$, varying $p$)  had only one point of intersection.
\item Surprisingly, this intersection seems to always occur before $\|T_t f\|_p$ reaches the value of $e^{n\rho_0}$.
\end{enumerate}
The last observation, lead to the following conjecture.

Given $(\matx, T_t, \pi)$ and its tensorization, let us introduce the following asymptotic problem: Fix $p_0>1$ and
$0 < \rho_0 < (1-p_0^{-1})\ln 2$. Define
$$ p(t, p_0, \rho_0) \eqdef \sup\{p:  \|T_t^{\otimes n} f\|_p
\le \|f\|_{p_0} \quad\forall n\ge 1 \, \forall f: \EE[f]=1, \|f\|_{p_0}\ge e^{n\rho_0} \} $$

Then we have the following lower bound (this is single-letter, i.e. computed for $n=1$!):
$$ p_{lb}(t, p_0, \rho_0) \eqdef \sup\{p: \min_{1 \le q\le p_0} \ln \|T_t\|_{q\to p} + \rho_0 {1-{1\over q}\over 1-{1\over p_0}} \le \rho_0 \} $$
(This follows from $\|T_t^{\otimes n}\|_{q\to p} = \|T_t\|_{q\to p}^n$ and $\|f\|_q \le \|f\|_{p_0}^{1-{1\over q}\over 1-{1\over p_0}}$.)

On the other hand, we have an upper bound $p_{ub}(t,p_0, \rho_0)$, obtained from solving  (with exponential precision)
$$ \|T_t 1_{B_\alpha n}\|_{p_{ub}} = e^{n(\rho_0 - h(\alpha) + \ln 2)} $$
where $\alpha$ is itself determined from solving
$$ \|1_{B_\alpha n}\|_{p_0} = e^{n(\rho_0 - h(\alpha) + \ln 2)} $$

\textbf{Conjecture:} (somewhat analytically and somewhat numerically verified)
$$ p(t,p_0, \rho_0) = p_{lb} = p_{ub} $$
In other words, \underline{balls asymptotically are the worst} for support-constrained hypercontractivity.

}

\section{Uncertainty principle on the hypercube}\label{sec:up}

\subsection{Background}

Uncertainty principle asserts that a function and its Fourier transform cannot be simultaneously narrowly concentrated.
There are several approaches to quantifying this statement, and here we adopt the Hilbert space point of view,
cf.~\cite[Chapter 3]{havin1994uncertainty}. Namely, for a pair of subspaces $V_1,V_2$ of a Hilbert space with inner
product $(\cdot,\cdot)$ and $\|f\|_2^2 \eqdef (f,f)$ we define
	$$ \cos \angle(V_1,V_2) \eqdef \sup_{f_1\in V_1, f_2 \in V_2} {|(f_1, f_2)|\over \|f_1\|_2 \|f_2\|_2}\,.$$
For the uncertainty principle, we will select sets $S$ and $\Sigma$ and define subspaces
 \begin{align}
 	V_S &\eqdef \{f: \supp f \subset S\}\label{eq:def_vs}\\
 	\hat V_\Sigma &\eqdef \{f: \supp \hat f \subset \Sigma\}\,, \label{eq:def_vhs}
\end{align}	
	where $\hat f$ denotes the corresponding Fourier transform (we will define it
precisely). Uncertainty principle corresponds to bounding $\cos \angle(V_S,\hat V_\Sigma)$ away from 1, thus
establishing to what extent functions can simultaneously concentrate on $(S,\Sigma)$.

There is a number of equivalent ways to think of $\cos \angle(V_1,V_2)$. Letting $P_i$ be an orthogonal
projection on $V_i$ and $P_i^\perp$ projection on $V_i^\perp$, it can be shown~\cite[Chapter 3]{havin1994uncertainty}:
\begin{align}
	\cos \angle(V_1, V_2) \le \theta & \iff \forall f \in V_1: \quad \|P_2 f\|_2 \le \theta \|f\|_2\label{eq:pdr_1}\\
				  & \iff \forall f \in V_1: \quad \|P_2^\perp f\|_2 \ge \sqrt{1-\theta^2} \|f\|_2\\
				  & \iff \lambda_{\max}(P_1 P_2 P_1) \le \theta\\
				  & \iff \|P_1 P_2\|_{2\to 2} \le \sqrt{\theta}\\
				  & \iff \forall f: \|f\|_2^2 \le {1\over 1-\theta} \left( \|P_1^\perp f\|^2_2 + \|P_2^\perp
f\|^2_2\right)\label{eq:pdr_4}
\end{align}
\apxonly{For the last statement: just take $e_i={P_i f\over \|P_i f\|}$ and represent $f=c_1 e_1+c_2 e_2 + f_0$, where
$f_0 \perp (e_1,e_2)$.}
Furthermore, there is also a simple criterion:
$$ \cos \angle(V_1, V_2) <1 \iff V_1 \cap V_2 = \{0\} \text{~and~} (V_1+V_2) \mbox{~---~closed}\,,$$
where for finite-dimensional $V_i$'s the closedness condition is vacuous (but not in general).

Finally, as shown in~\cite{fuchs1954magnitude} and~\cite{landau1961prolate}, knowledge of
$\cos\angle(V_1,V_2)$ is sufficient for completely characterizing the two-dimensional region
$$ \{(\|P_1f\|_2^2, \|P_2 f\|_2^2) \} $$

Before proceeding to our own results, we briefly review the history of results for
$\mreals^n$. First,~\cite{slepian1961prolate} computed $\cos \angle(V_S, \hat V_\Sigma)$
for $S,\Sigma$ being two balls (in fact
they computed $\lambda_{max}(P_1 P_2 P_1)$ and named eigenfunctions of the latter prolate spheroidal functions).
Next,~\cite{benedicks1985fourier} (worked out in 1974, but published much later) showed that
$$ \vol(S), \vol(\Sigma) <\infty \quad \implies \quad V_S \cap \hat V_\Sigma = \{0\}\,.$$
Later,~\cite{amrein1977support} strengthened this to
$$ \vol(S), \vol(\Sigma) <\infty \quad \implies \quad \cos\angle(V_S,V_\Sigma) <1\,.$$
Finally, for $n=1$ ~\cite{nazarov1993local} showed
$$ \vol(S), \vol(\Sigma) <\infty \quad \implies \quad \cos\angle(V_S,V_\Sigma) <1-ce^{-c\vol(S)\vol(\Sigma)}\,.$$
Lately, there were a number of extensions and improvements of this result for $n>1$, e.g.~\cite{jaming2007nazarov}.

\subsection{Sharp uncertainty principle on $\FF_2^n$}
Define the characters, indexed by $v\in\FF_2^n$,
$$ \chi_v(x) \eqdef \prod_{j: v_j = 1} \chi_j(x) = (-1)^{\la v,x \ra}\,,$$
where $\la v,x \ra=\sum_{j=1}^n v_j x_j$ is a non-degenerate bilinear form on $\FF_2^n$. The
Fourier transform of $f:\FF_2^n \to \mcomplex$ is
$$ \hat f(\omega) \eqdef \sum_{x\in\FF_2^n} \chi_{\omega}(x) f(x) = 2^n (f, \chi_\omega)\,, \qquad
\omega \in \FF_2^n\,.$$
We denote by $|x|$ the Hamming weight of $x\in \FF_2^n$ and by $B_r = \{x: |x| \le r\}$ -- Hamming ball.

\begin{theorem}\label{th:uncert} For any $\rho_1,\rho_2 \in [0,1/2]$ satisfying
\begin{equation}\label{eq:bc1}
	(1-2\rho_1)^2 + (1-2\rho_2)^2 > 1\,,
\end{equation}
there exist an $\epsilon>0$ and $n_0$ such that for any $n\ge n_0$, any $S\subset \FF_2^n$ with $|S|\le e^{nh(\rho_1)}$ and
$\Sigma=B_{\rho_2 n}$ we have
\begin{equation}\label{eq:uncert}
	\cos \angle (V_S, \hat V_{\Sigma}) \le e^{-n \epsilon}\,,
\end{equation}
where $V_S, \hat V_\Sigma$ are defined in~\eqref{eq:def_vs}-\eqref{eq:def_vhs}.

Conversely, for $\rho_1,\rho_2 \in [0,1/2]$ satisfying\footnote{When $\rho_1>{1\over2}$ (or $\rho_2> {1\over 2}$),
the result~\eqref{eq:uncert_anti} also holds by reducing to $\rho_1={1\over2}$. This is possible since $\cos\angle(V_S, \hat
V_{\Sigma})$ is monotone in $S,\Sigma$.}
\begin{equation}\label{eq:bc2}
	(1-2\rho_1)^2 + (1-2\rho_2)^2 < 1\,,
\end{equation}
there exist $\epsilon>0$ and $n_0$ such that for all $n\ge n_0$ we have
\begin{equation}\label{eq:uncert_anti}
	\cos \angle(V_S, \hat V_{\Sigma}) \ge 1-e^{-n\epsilon}, \qquad S=B_{\rho_1 n}, \Sigma=B_{\rho_2 n}\,.
\end{equation}
\end{theorem}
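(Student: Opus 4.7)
\emph{Direct part \eqref{eq:uncert}.} By~\eqref{eq:pdr_1}, it suffices to show that every $f\in V_S$ with $\|f\|_2=1$ satisfies $\|P_{\hat V_\Sigma} f\|_2\le e^{-n\epsilon}$. Writing $g:=P_{\hat V_\Sigma}f$ and starting from $\|g\|_2^2=(f,g)$, I exploit that $\hat V_\Sigma$ is $T_t^{\otimes n}$-invariant with eigenvalues $e^{-t|\omega|}\ge e^{-t\rho_2 n}$ for $\omega\in B_{\rho_2 n}$. Hence one can write $g=T_t^{\otimes n}F$ for some $F\in\hat V_\Sigma$ with $\|F\|_2\le e^{t\rho_2 n}\|g\|_2$. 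Self-adjointness of $T_t^{\otimes n}$ on $L_2(\pi^n)$ and Cauchy--Schwarz give
\[
\|g\|_2^2=(T_t^{\otimes n}f, F)\le \|T_t^{\otimes n}f\|_2\cdot \|F\|_2\le \|T_t^{\otimes n}f\|_2\cdot e^{t\rho_2 n}\|g\|_2.
\]
Combined with the support bound $\|f\|_{p_0}\le(|S|/2^n)^{1/p_0-1/2}\|f\|_2$ (for $p_0\le 2$; by H\"older with exponents $2/p_0$ and $2/(2-p_0)$) and the improved hypercontractive estimate $\|T_t^{\otimes n}f\|_2\le\|f\|_{p_0}$ from Theorem~\ref{th:hcb} (taking $t$ equal to the time at which $p(t)=2$), this yields
\[
\|g\|_2/\|f\|_2\le\exp\bigl\{-n\bigl[(1/p_0-1/2)(\ln 2-h(\rho_1))-t\rho_2\bigr]\bigr\}.
\]

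\emph{Matching the sharp threshold.} The value $\rho_0=(1-1/p_0)(\ln 2-h(\rho_1))$ required by Theorem~\ref{th:hcb} is exactly the one supplied by the support constraint. I then let $\delta:=2-p_0\downarrow 0$: then $u(0)=\ln(p_0-1)=-\delta+O(\delta^2)$, and the time $t(p_0)$ at which the ODE~\eqref{eq:hcb_0} reaches $p(t)=2$ satisfies $t(p_0)=\delta/C(\ln 2-h(\rho_1))+O(\delta^2)$, while $1/p_0-1/2=\delta/4+O(\delta^2)$. The exponent in the previous display becomes
\[
\delta\bigl[\tfrac{1}{4}(\ln 2-h(\rho_1))-\rho_2/C(\ln 2-h(\rho_1))\bigr]+O(\delta^2).
\]
By the definition of $C$ in \eqref{eq:hcb_0}, one has $(\ln 2-h(y))\cdot C(\ln 2-h(y))=2-4\sqrt{y(1-y)}$, so the bracket is strictly positive precisely when $2-4\sqrt{\rho_1(1-\rho_1)}>4\rho_2$, equivalently when $(1-2\rho_1)^2+(1-2\rho_2)^2>1$, which is exactly \eqref{eq:bc1}. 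Any small $\delta>0$ then delivers the required $\epsilon>0$.

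\emph{Converse \eqref{eq:uncert_anti}.} I construct an explicit near-extremal product function. By the strict inequality in \eqref{eq:bc2}, pick $\rho_1^\ast<\rho_1$ and $\rho_2^\ast<\rho_2$ with $(1-2\rho_1^\ast)^2+(1-2\rho_2^\ast)^2=1$. Set $f_1(0)=\sqrt{1-\rho_1^\ast}$, $f_1(1)=\sqrt{\rho_1^\ast}$ and $f:=f_1^{\otimes n}$. Then the normalized mass $f(x)^2/\sum_y f(y)^2$ on $\FF_2^n$ is product $\Bern(\rho_1^\ast)$, and Chernoff's bound gives $\|P_{V_S}^\perp f\|_2^2/\|f\|_2^2=\PP[|X|>\rho_1 n]\le e^{-nc_1}$. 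A short computation yields $\hat f_1(0)^2=1+2\sqrt{\rho_1^\ast(1-\rho_1^\ast)}$ and $\hat f_1(1)^2=1-2\sqrt{\rho_1^\ast(1-\rho_1^\ast)}$; the boundary-curve relation forces $\hat f_1(1)^2/2=\rho_2^\ast$, so the normalized Fourier spectrum of $f$ is product $\Bern(\rho_2^\ast)$, and a second Chernoff bound gives $\|P_{\hat V_\Sigma}^\perp f\|_2^2/\|f\|_2^2\le e^{-nc_2}$. The last equivalence in \eqref{eq:pdr_4} then yields $\cos\angle(V_S,\hat V_\Sigma)\ge 1-e^{-nc_1}-e^{-nc_2}$.

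\emph{Main obstacle.} The architecture---H\"older against the semigroup, plus hypercontractivity, plus the support bound---is routine; the point is that Bonami's classical estimate \eqref{eq:bonami} yields only the weaker region $\ln 2-h(\rho_1)>2\rho_2$. The sharp threshold \eqref{eq:bc1} emerges only after feeding in the improved $p(t)$ from Theorem~\ref{th:hcb} (built on the nonlinear $2$-LSI of Theorem~\ref{th:lsi_cube}) and carrying out the asymptotic expansion at $p_0\uparrow 2$, which dovetails exactly with the algebraic identity $(\ln 2-h(y))\cdot C(\ln 2-h(y))=2-4\sqrt{y(1-y)}$ that governs the right-hand side of \eqref{eq:bc1}.
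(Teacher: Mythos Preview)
Your argument is correct and follows essentially the same route as the paper: the converse uses the same product function $f(x)=\alpha^{|x|}$ (in your parametrization $\alpha=\sqrt{\rho_1^\ast/(1-\rho_1^\ast)}$), and the direct part combines the eigenvalue domination $e^{t\rho_2 n}T_t\succeq \Pi_{\le \rho_2 n}$, the improved hypercontractivity of Theorem~\ref{th:hcb}, and the H\"older support bound, extracting the threshold~\eqref{eq:bc1} from a first-order expansion at $p=2$. The only cosmetic difference is that the paper runs hypercontractivity from $p_0=2$ upward and applies H\"older to $(T_tf,f)$ with exponents $(p(t),q(t))$, whereas you run it from $p_0=2-\delta$ up to $2$ and apply Cauchy--Schwarz after inverting $T_t$ on $\hat V_\Sigma$; the two computations are dual and yield identical asymptotics.
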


\begin{proof}
For the case~\eqref{eq:bc2}, fix $\alpha \in \mreals$, $\alpha \neq \pm 1$ and consider the following Fourier pair:
\begin{align} f(x) &= \alpha^{|x|} \\
   \hat f(\omega) &= c \left(1-\alpha \over 1+\alpha\right)^{|\omega|}, \qquad c = (1+\alpha)^n\,.
\end{align}
Then, it is easy to see that the $L_2$-norm of $f$ is concentrated around $|x| \approx {1\over 1+\alpha^{-2}} n$.
Thus, whenever radius
$ \rho_1> {1\over 1+\alpha^{-2}}$, we have for some $\epsilon>0$
$$ \sum_{x: |x| > \rho_1 n} f(x)^2 \le e^{-\epsilon n} \sum_{x\in\FF_2^n} f(x)^2\,. $$
Similarly, whenever $\rho_2 > {1\over 1+\beta^{-2}}$, where $\beta = {1-\alpha\over 1+\alpha}$, we have
$$ \sum_{\omega: |\omega| > \rho_2 n} \hat f(\omega)^2 \le e^{-\epsilon n} \sum_{\omega \in \FF_2^n} \hat f(\omega)^2\,.$$
Whenever,~\eqref{eq:bc2} holds, it is not hard to see that there exists a choice of $\alpha \in (0,1)$ satisfying both
$ \rho_1> {1\over 1+\alpha^{-2}}$ and $\rho_2 > {1\over 1+\beta^{-2}}$. Thus, taking corresponding $f$ and
using~\eqref{eq:pdr_4} we get~\eqref{eq:uncert_anti}.

Next, we assume~\eqref{eq:bc1}. We define the Fourier projection operators $\Pi_a$ as
\begin{equation}\label{eq:fproj}
	 \widehat{\Pi_a f}(\omega) \eqdef \hat f(\omega) 1\{|\omega| = a\}\,,\qquad a=0,1,\ldots,n\,.
\end{equation}
and set $\Pi_{\le r} = \sum_{a=0}^r \Pi_a$. By~\eqref{eq:pdr_1} we need to show that for any function $f$ with support
$|\supp f|\le e^{nh(\rho_1)}$ we have
$$ \|\Pi_{\le \rho_2 n} f\|_2 \le e^{-n\epsilon} \|f\|_2\,,$$
for some $\epsilon>0$ independent of $n$ and $f$.

Note that $\widehat{T_t f}(\omega) = e^{-t|\omega|} \hat f(\omega)$. Thus, comparing eigenvalues we have $e^{ta}
T_t\succeq \Pi_a$ (in the sense of positive-semidefiniteness). Consequently,
\begin{equation}\label{eq:bc3}
	\|\Pi_a f\|_2^2 = (\Pi_a f, f) \le e^{at} (T_t f, f) \le e^{at} \|f\|_{q} \|T_t f\|_p\,,
\end{equation}
where $p$ and $q$ are H\"older conjugates. Since $|\supp f|\le e^{nh(\rho_1)}$ we have
from Theorem~\ref{th:hcb} with $p_0=2$ and $\rho_0 = {\ln2-h(\rho_1)\over2}$:
$$ \|T_t f\|_{p(t)} \le \|f\|_2\,, \qquad p(t) = 2 + p'(0) t + o(t)\,,$$
where the value of $p'(0)$ is given in~\eqref{eq:hcb_3a}.\footnote{For extracting explicit constants, one may
invoke~\eqref{eq:hcc_firm} instead.}
Taking $p=p(t)>2$ in~\eqref{eq:bc3} we need upper-bound $\|f\|_{q}$, which we again do by invoking the bound on support
\begin{equation}\label{eq:hcc_3}
	\|f\|_{q} \le \|f\|_2 e^{-n(\ln 2- h(\rho_1))({1\over q} - {1\over2})}\,,\qquad \forall 1 \le q \le 2\,.
\end{equation}
Overall, we have shown for all $a$ and $t$ that
$$ \|\Pi_a f\|_2^2 \le e^{at} e^{-n(\ln2 - h(\rho_1))({1\over 2} - {1\over p(t)})} \|f\|_2^2\,.$$
Analyzing this inequality for $t$ close to $0$ we conclude that whenever
\begin{equation}\label{eq:bc9}
	\rho_2 < {p'(0)\over 4}(\ln 2-h(\rho_1))
\end{equation}
we necessarily have for some $\epsilon>0$ (depending on the gap in the inequality above and on the local bound for $p''(t)$ at
0) that for all $a\le \rho_2 n$
$$ \|\Pi_a f\|_2 \le e^{-n\epsilon} \|f\|_2\,.$$
Using expression for $p'(0)$ in~\eqref{eq:hcb_3a}, we see that~\eqref{eq:bc9} is equivalent to
\begin{equation}\label{eq:bc10}
	2\rho_2 < 1-2\sqrt{\rho_1 (1-\rho_1)}\,,
\end{equation}
which is in turn equivalent to~\eqref{eq:bc1}.

\apxonly{Here is a different method. It gives a slightly better bound, but requires to use a less-suitable HC $|T_t f\|_2
\le \|f\|_p$.
Note that
\begin{equation}\label{eq:bc7}
	\|\Pi_a f\|_2 \le e^{s a} \|T_s f\|_2\,.
\end{equation}
Indeed, comparing eigenvalues we have $e^{ta} T_t \succeq \Pi_a $ (in the sense of positive-semidefiniteness) and hence
\begin{equation}\label{eq:bc8}
	\|\Pi_a f\|_2^2 = \la \Pi_a f, f \ra \le e^{ta} \la T_t f, f \ra = e^{ta} \la T_{t/2} f, T_{t/2} f \ra = e^{ta} \|T_{t/2} f\|_2^2\,,
\end{equation}
where we used self-adjointedness of $T_{t/2}$ and the semigroup property $T_t = T_{t/2} T_{t/2}$. Thus,~\eqref{eq:bc7}
follows from~\eqref{eq:bc8} by taking $s=t/2$. Since $|\supp f|\le 2^{nh(\rho_1)}$ we have
from Theorem~\ref{th:hcb} with $R=h(\rho_1)$
$$ \|T_s f\|_2 \le \|f\|_{p_0} $$
etc.

Vanilla HC gives the following:
By the Kahn-Kalai-Linial bound, e.g.~\cite[Lemma 11]{YP13}, we have
$$ \|\Pi_a f\|_2 \le (p-1)^{-a/2}\|f\|_p\,, \qquad \forall p\in(1,2) $$
and use~\eqref{eq:hcc_3} to get that whenever
$$ {a\over n} < {\log 2\over 2} (1-h(\rho_1)) + o(1)\,,$$
then
\begin{equation}\label{eq:ltt0}
	\|\Pi_a f\|_2 \le e^{-n\epsilon} \|f\|_2\,.
\end{equation}
Note: it is not hard to convince oneself that taking $p=2-\epsilon$ does not lose generality here (i.e. optimizing over
$p$ does not extend the range of $a$ for which~\eqref{eq:ltt0} is shown).
In other words, we have $\cos\angle \le e^{-n\epsilon}$ if
$$ \rho_2 < {\log 2\over 2} (1-h(\rho_1))\,. $$
}
\end{proof}

For completeness, we also provide a criterion for when two subspaces have a common element (for the special case of
$S,\Sigma$ being two balls). It demonstrates that there is a ``discontinuity'' between the regime $\cos \angle (V_S, \hat V_\Sigma) \ge 1-e^{O(n)}$ and $\cos \angle (V_S, \hat V_\Sigma) = 1$.
\begin{proposition} Let $S = B_{r_1}$ and $\Sigma = B_{r_2}$ in $\FF_2^n$. Then
	$$ V_S \cap \hat V_{\Sigma} \neq \{0\} \iff \cos \angle (V_S, \hat V_\Sigma) = 1 \iff r_1 + r_2 \ge n\,.$$
\end{proposition}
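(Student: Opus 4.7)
The plan is to handle the three-way equivalence in two stages. The middle equivalence $V_S \cap \hat V_\Sigma \neq \{0\} \iff \cos \angle(V_S, \hat V_\Sigma) = 1$ is already available: it is exactly the criterion recorded right after~\eqref{eq:pdr_4}, and its closedness hypothesis is automatic in finite dimension. So it suffices to prove the combinatorial equivalence $V_S \cap \hat V_\Sigma \neq \{0\} \iff r_1 + r_2 \ge n$.

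For the direction $r_1+r_2\ge n \Rightarrow V_S\cap \hat V_\Sigma \neq \{0\}$, I would argue by a dimension count inside $\mathbb{R}^{\FF_2^n}$. Since the Fourier transform is a linear isomorphism, $\dim V_S = |B_{r_1}|$ and $\dim \hat V_\Sigma = |B_{r_2}|$. Pairing $k$-subsets with their complements gives the identity $|B_{r_1}| + |B_{n-r_1}| = 2^n + \binom{n}{r_1}$, so as soon as $r_2 \ge n - r_1$ the two subspaces have dimensions summing to strictly more than $2^n$ and must intersect non-trivially.

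For the converse, assume $r_1+r_2<n$ and take any $f\in V_S\cap \hat V_\Sigma$. Using $\chi_\omega(x)=\prod_{j:\omega_j=1}(1-2x_j)$, the condition $\supp \hat f\subset B_{r_2}$ says that $f$, viewed as a function on $\{0,1\}^n$, is a multilinear polynomial of degree at most $r_2$. Applying the involution $y=\bar x$, the function $g(y):=f(\bar y)$ is again multilinear of degree $\le r_2$, while $\supp f\subset B_{r_1}$ translates to $g$ vanishing on $\{y:|y|<n-r_1\}=B_{n-r_1-1}$, which under $r_1+r_2<n$ contains $B_{r_2}$. The one substantive step is then a Reed--Muller-style rigidity lemma: a multilinear polynomial $g$ of degree $\le r$ on $\{0,1\}^n$ that vanishes on $B_r$ must be identically zero. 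I would dispatch this by M\"obius inversion on the subset lattice: expanding $g(y)=\sum_{|S|\le r} c_S \prod_{i\in S} y_i$ and evaluating at $y=1_T$ yields $g(1_T)=\sum_{S\subset T,\,|S|\le r} c_S$, and for $|T|\le r$ the constraint $|S|\le r$ is vacuous, so $c_T = \sum_{S\subset T}(-1)^{|T|-|S|}g(1_S) = 0$. This forces $g\equiv 0$, hence $f\equiv 0$. No part of the argument looks delicate; the main conceptual move is the involution $y=\bar x$, which recasts ``$f$ supported on a low-weight ball'' as ``$g$ vanishing on a low-weight ball.''
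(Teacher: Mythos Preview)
Your argument is correct, but it differs from the paper's in both directions of the combinatorial equivalence.

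For $r_1+r_2\ge n\Rightarrow V_S\cap\hat V_\Sigma\neq\{0\}$, the paper simply exhibits a witness: the indicator of the subcube $\{x:x_{r_1+1}=\cdots=x_n=0\}$, whose Fourier transform is supported on $\{\omega:\omega_1=\cdots=\omega_{r_1}=0\}\subset B_{n-r_1}\subset B_{r_2}$. Your dimension count via $|B_{r_1}|+|B_{n-r_1}|=2^n+\binom{n}{r_1}$ is equally short and valid, just non-constructive.

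For the converse, the paper averages over coordinate permutations to reduce to a radial $f(x)=f_1(|x|)$, expands $f_1$ in Krawtchouk polynomials, and observes that $\supp\hat f\subset B_{r_2}$ forces $f_1$ to agree on $\{0,\ldots,n\}$ with a univariate polynomial of degree $\le r_2$, hence have at most $r_2$ integer zeros; since $f_1$ vanishes on $r_1+1,\ldots,n$ this gives $n-r_1\le r_2$. Your route via the involution $y=\bar x$ and M\"obius inversion on the subset lattice avoids the symmetrization step entirely and invokes a standard Reed--Muller rigidity fact (a degree-$\le r$ multilinear polynomial vanishing on $B_r$ is zero). Both arguments are elementary; yours is a bit more robust in that it never uses the specific radial structure of the ball, while the paper's reduction to a one-variable root count is perhaps more transparent once the symmetrization is granted.
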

\begin{proof} If $r_1 + r_2 \ge n$, then take $f(x) = 1\{x_{r_1+1}=\cdots =x_n=0\}$. Its Fourier
transform is supported on $\{\omega: \omega_1=\cdots=\omega_{r_1}=0\}$. Thus $f\in V_S \cap \hat V_{\Sigma}$. On the
other hand, suppose there is $f\in V_S \cap \hat V_{\Sigma}$. By averaging over permutations of coordinates (both
subspaces are invariant to such), we conclude that $f(x) = f_1(|x|)$. As such, it can be expanded in terms of Krawtchouk
polynomials:
$$ f_1(|x|) = \sum_{k=0}^n a_k K_k(|x|)\,,$$
where each $K_k(\cdot)$ is a degree $k$ univariate polynomial. Note that $\hat K_k(\omega) \neq 0$ iff $|\omega|=k$.
Thus, constraint $\supp \hat f \subset B_{r_2}$ is equivalent
to requiring $a_{k}=0$ for $k > r_2$. Thus, we conclude that $f_1$ on integers inside $[0,n]$ coincides with a degree
$r_2$ polynomial, and hence has $\le r_2$ zeros. Thus, $r_1 \ge n-r_2$ as claimed.
\end{proof}

\subsection{Discussion}

To start the discussion, let us recall the function
\begin{equation}\label{eq:rlp1_def}
	R_{LP1}(\delta)\eqdef
h\left({1\over 2} - \sqrt{\delta(1-\delta)}\right)\,,
\end{equation}
which is known as the first linear-programming (LP1) bound~\cite{MRRW77}. Its importance is in that it gives an upper bound
$2^{n R_{LP1}(\delta) + o(n)}$ on the number of points in Hamming space $\{0,1\}^n$ that have pairwise distance
exceeding $n\delta$. In the range $\delta \gtrsim 0.28$ this bound is the best known to date, whereas for smaller
$\delta$ it is superceded by the second linear-programming bound~\cite{MRRW77}.

It is instructive, next, to provide an equivalent statement of Theorem~\ref{th:uncert}.
\begin{theorem}[Restatement of the uncertainty principle]\label{th:uncert_rest} For any $\delta < 1/2$ and $0<E<R_{LP1}(\delta)$ there is
$\epsilon>0$ with the following property. Let $f(x_1,\ldots,x_n)$ be polynomial of total degree at most $\delta
n$. Then, for any $S\subset \{\pm1\}^n$ of size $|S| \le e^{n E}$ we have
	$$ \sum_{x\in S} f(x)^2 \le e^{-n\epsilon} \sum_{x\in\{\pm1\}^n} f(x)^2\,.$$
\end{theorem}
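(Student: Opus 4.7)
The plan is to recognize this restatement as a direct translation of Theorem~\ref{th:uncert}, matching the two sides of the following dictionary:
(i) the condition ``polynomial of total degree at most $\delta n$ on $\{\pm1\}^n$'' is precisely ``$f\in \hat V_\Sigma$ for $\Sigma=B_{\delta n}$''; and
(ii) the conclusion $\sum_{x\in S} f(x)^2 \le e^{-n\epsilon}\sum_x f(x)^2$, after dividing both sides by $2^n$, is precisely $\|P_{V_S} f\|_2^2 \le e^{-n\epsilon}\|f\|_2^2$.

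For step (i), identify $\{\pm1\}^n$ with $\FF_2^n$ via $y_j\leftrightarrow (-1)^{x_j}$. Since $y_j^2=1$, every polynomial on $\{\pm1\}^n$ reduces to a multilinear one, expressible as $f=\sum_{v} \hat f(v) \chi_v$ with $\chi_v(y)=\prod_{v_j=1} y_j$ having degree $|v|$. Thus $\deg f\le \delta n$ is exactly $\supp\hat f\subset B_{\delta n}=\Sigma$. For step (ii), with $\pi$ uniform on the cube and $P_{V_S}$ the orthogonal projection onto $V_S=\{g:\supp g\subset S\}$, we have $P_{V_S}f=f\cdot 1_S$, so $\|P_{V_S}f\|_2^2=2^{-n}\sum_{x\in S}f(x)^2$. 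By the equivalence~\eqref{eq:pdr_1}, the bound $\|P_{V_S}f\|_2\le e^{-n\epsilon/2}\|f\|_2$ for every $f\in \hat V_\Sigma$ is exactly $\cos\angle(V_S,\hat V_\Sigma)\le e^{-n\epsilon/2}$.

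Thus it suffices to apply Theorem~\ref{th:uncert} with $\rho_2=\delta$ and $\rho_1=h^{-1}(E)\in[0,1/2)$ (which is well-defined since $E<R_{LP1}(\delta)<\ln 2$, and $h:[0,1/2]\to[0,\ln 2]$ is a bijection). The hypothesis $|S|\le e^{nE}=e^{n h(\rho_1)}$ matches exactly. It only remains to verify that the degree condition $E<R_{LP1}(\delta)=h\bigl(\tfrac12-\sqrt{\delta(1-\delta)}\bigr)$ is equivalent to condition~\eqref{eq:bc1}. Using monotonicity of $h$ on $[0,1/2]$, the inequality $h(\rho_1)=E<h\bigl(\tfrac12-\sqrt{\delta(1-\delta)}\bigr)$ is equivalent to $\rho_1<\tfrac12-\sqrt{\delta(1-\delta)}$, i.e.\ $1-2\rho_1>2\sqrt{\delta(1-\delta)}$, i.e.\
\begin{equation*}
(1-2\rho_1)^2 > 4\delta(1-\delta) = 1-(1-2\delta)^2 = 1-(1-2\rho_2)^2,
\end{equation*}
which is exactly~\eqref{eq:bc1}. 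Theorem~\ref{th:uncert} then yields $\cos\angle(V_S,\hat V_\Sigma)\le e^{-n\epsilon'}$ for some $\epsilon'>0$ and $n\ge n_0$, whence $\sum_{x\in S}f(x)^2\le e^{-n\epsilon}\sum_x f(x)^2$ with $\epsilon=2\epsilon'$.

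There is no real obstacle here: the proof is essentially bookkeeping, and the only substantive check is the algebraic equivalence between the degree bound $E<R_{LP1}(\delta)$ and the geometric condition~\eqref{eq:bc1}, which is an immediate consequence of $h$ being a bijection on $[0,1/2]$.
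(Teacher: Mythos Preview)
Your proposal is correct and matches the paper's intent exactly: the paper presents this theorem as a direct restatement of Theorem~\ref{th:uncert} without a separate proof, and you have supplied precisely the dictionary and algebraic check (equivalence of $E<R_{LP1}(\delta)$ with~\eqref{eq:bc1}) that justify calling it a restatement. There is nothing to add.
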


First, we mention that a weaker estimate with $E=(1-3\delta)\ln 2$ was shown by~\cite{KM_uncert} by
using hypercontractivity~\eqref{eq:bonami} similarly to~\cite{KKL88}. Their argument can be easily tightened to yield a
stronger $E=\ln 2 - 2\delta$. Both of these are weaker than the sharp estimate above.
%\apxonly{Here are their precise results: If $\Sigma=B_{\rho_2 n}$ then
%	\begin{align} \cos \angle(V_S, \hat V_\Sigma) &= 1  \qquad \implies \qquad |S| \ge 2^{n(1-\rho_2)}\,,\\
%	   \cos \angle(V_S, \hat V_\Sigma) &\le \alpha \qquad \implies \qquad
%	   		|S| \ge \alpha^2 2^{n(1-3 \rho_2)}\,.
%	\end{align}			
%}

Theorem~\ref{th:uncert_rest} shows that any low-degree polynomial (restricted to the hypercube) smears its $L_2$-norm so
evenly that one needs to sum $e^{n R_{LP1}(\delta)}$ top values in order to obtain a sizable fraction of its overall
$L_2$-norm. It is interesting to compare this with~\cite{AS_Delsarte} showing that any $f$ that is a) a degree $\le \delta
n$ polynomial and b) $f\ge 0$ satisfies
\begin{equation}\label{eq:up_l1}
	\max_{x\in \{\pm1\}^n} |f(x)| \le e^{-n(\ln 2 - h({\delta\over2})) + o(n)} \sum_{x\in\{\pm1\}^n} |f(x)|\,.
\end{equation}
We \textit{conjecture} that~\eqref{eq:up_l1} holds for all $f:\{\pm1\}^n\to\mreals$ of degree $\le \delta n$. This could
be called an $L_1$-version of the uncertainty principle. If true, it would imply that the sum of any $e^{n
R_{Ham}(\delta)}$, $R_{Ham}(\delta)=\ln 2 - h({\delta\over2})$ values of $|f(x)|$ is negligible compared to the sum over
all of $\{\pm 1\}^n$.

\apxonly{Notes: \begin{itemize}
	\item To prove this, just notice that $\Theta_L = \min\{2^n {\EE[\hat g] \over \hat g(0)}: \hat g\ge 0, g=0
	\text{~on~} [\delta n+1,n]\cap\ZZ\}$. Thus $\hat g(0) \le $ (blah) and by translating $\hat g_1(x) \eqdef \hat
	g(x-x_0)$, we also have that $\hat g(x_0) \le $ (blah).
	\item From~\eqref{eq:hca_2}, this implies $\Ent(f)\le nh(\delta/2) + o(n)$.
	\item Alex also proves (Lemma 3.2
ibid) that $f(0) \le e^{-nh(\delta/2)}\|f\|_1$ for any permutation-invariant (but not nec. non-negative) $f$.
	\end{itemize}
}

Finally, we discuss to what extent one can relax condition that $\Sigma=B_{\rho_2 n}$ in Theorem~\ref{th:uncert}. First,
notice that clearly the same conclusion holds for $\Sigma$ which is an image of a ball $B_{\rho_2 n}$ under a linear
isomorphism $\FF_2^n \to \FF_2^n$. This provides a wealth of examples of $\Sigma$ that are less ``contiguous'' than
$B_{\rho_2 n}$. \apxonly{Indeed, such isomorphism also acts as isomorphism in the primal space.}

At the same time, we cannot extend Theorem~\ref{th:uncert} to $\Sigma$ being an arbitrary subset of the same cardinality
as $B_{\rho_2 n}$ (recall that $|B_{\rho_2 n}| = e^{nh(\rho_2) + o(n)}$). Indeed, a simple computation shows that when $S$ and $\Sigma$ are linear subspaces of $\FF_2^n$ we
have
\begin{equation}\label{eq:cos_lin}
	\cos \angle(V_S, \hat V_\Sigma) = \sqrt{|\Sigma \cap S^\perp|\over |S^\perp|}\,,
\end{equation}
where $S^\perp \eqdef \{x: <x,v> = 0\,, \forall v \in S\}$ is the dual of $S$. \apxonly{Taking $G:\FF_2^k \to \FF_2^n$ with
image $S$, we define $f_1(u) = f(uG)$ and notice that $\sum_{\omega} \hat f(\omega)^2 = |\Sigma \cap S^\perp| \sum_{\nu
\in \Sigma'} \hat f_1(\nu)^2$, where $\Sigma' = \{\nu: \nu = \omega G', \omega \in \Sigma\}$.}
Thus, if we take $S$ to be a linear subspace of dimension $\alpha n$, $0<\alpha<1$, and $\Sigma = S^\perp$ (of dimension $(1-\alpha)n$)
and solve for $\rho_1$ and $\rho_2$ in
	$$ h(\rho_1) = \alpha \ln 2, h(\rho_2) = (1-\alpha) \ln 2 $$
we conclude that these $\rho_1$ and $\rho_2$ always satisfy
	$$ (1-2\rho_1)^2 + (1-2\rho_2)^2 > 1\,,$$
while from~\eqref{eq:cos_lin} we have $\cos\angle (V_S,\hat V_\Sigma)=1$.

Consequently, we leave open the question of
determining the more general uncertainty principle, i.e. characterizing the best
pairs $(E_1, E_2)$ for which one can prove implication
$$ |S|\le e^{n E_1}, |\Sigma|\le e^{n E_2} \quad \implies \quad \cos \angle(V_S, \hat V_\Sigma) \le \epsilon\,.$$
A partial result easily follows from the Hausdorff-Young inequality:

\begin{proposition}\label{prop:setset} For any $E_1,E_2 \in (0,\ln 2)$ satisfying $E_1 + E_2 < \ln 2$ there exist $\epsilon>0$ and $n_0$
such that for all $n\ge n_0$, all $S,\Sigma\subset \FF_2^n$ with $|S|=e^{n E_1}, |\Sigma|=e^{n E_2}$ we have
	$$ \cos \angle (V_S, \hat V_\Sigma) \le 1-\epsilon\,.$$
Conversely, for any positive integers $k_1,k_2 \le n$ such that $k_1 + k_2 \ge n$ there exist $|S|=2^{k_1}$ and $|\Sigma|=2^{k_2}$ such that
	$$ \cos \angle (V_S, \hat V_\Sigma) = 1\,.$$
\end{proposition}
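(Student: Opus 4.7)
The plan for the direct inequality is to bound the inner product $(f,g)$ for $f\in V_S$ and $g\in \hat V_\Sigma$ by combining a uniform bound on $g$ with the smallness of $\PP[S]$. The first step is to observe that Fourier inversion writes $g$ as a sum of at most $|\Sigma|$ characters; applying Cauchy--Schwarz to this sum and invoking Parseval yields the pointwise bound $\|g\|_\infty \le \sqrt{|\Sigma|}\,\|g\|_2$. This is simply the $L^1$--$L^\infty$ endpoint of the Hausdorff--Young inequality on $\FF_2^n$ and is the only ingredient from harmonic analysis that is needed.

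The second step exploits $\supp f\subset S$ to write $(f,g) = \EE[f\, g\, 1_S]$ and then applies Cauchy--Schwarz on the physical side, giving $|(f,g)| \le \|f\|_2 \cdot \|g\|_\infty \sqrt{\PP[S]}$. Chaining this with the previous estimate produces
$$ \cos\angle(V_S,\hat V_\Sigma) \;\le\; \sqrt{|\Sigma|\cdot\PP[S]} \;=\; e^{n(E_1+E_2-\ln 2)/2}\,, $$
which under the hypothesis $E_1+E_2<\ln 2$ is not merely bounded away from $1$ but in fact decays exponentially in $n$, yielding the stated conclusion (and more) for any $n\ge 1$.

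For the converse, the plan is to exhibit a single nonzero function belonging to both subspaces at once. Take $S$ to be any $k_1$-dimensional linear subspace of $\FF_2^n$; the indicator $1_S$ has Fourier transform proportional to $1_{S^\perp}$, and $|S^\perp| = 2^{n-k_1}\le 2^{k_2}$ by the assumption $k_1+k_2\ge n$. Padding $S^\perp$ with arbitrary extra elements to produce a set $\Sigma$ of size exactly $2^{k_2}$ then places $1_S$ in $V_S\cap \hat V_\Sigma$, whence $\cos\angle(V_S,\hat V_\Sigma)=1$. This is also consistent with~\eqref{eq:cos_lin} applied before padding, i.e.\ with both $S$ and $\Sigma=S^\perp$ taken to be subspaces.

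I do not foresee any genuine obstacle: both halves are elementary given what is already recorded in this section. The only point to watch is that the paper's conventions (expectation-based $\|\cdot\|_p$ versus the summation-based Fourier transform $\hat f(\omega)=\sum_x \chi_\omega(x) f(x)$) produce the factor $2^{2n}$ in Parseval, which must be carried correctly through the derivation of $\|g\|_\infty \le \sqrt{|\Sigma|}\|g\|_2$ — a routine bookkeeping matter.
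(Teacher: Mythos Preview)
Your proof is correct, and in fact considerably simpler and sharper than the paper's. For the forward direction the paper invokes the Hirschman entropic uncertainty principle~\eqref{eq:ccl_1}, then bounds $\Ent(f^2)/\EE[f^2]$ from the support constraint on $S$ and bounds $\Ent(\hat f^2)/\EE[\hat f^2]$ via an entropy chain-rule argument using the random variable $U\sim \hat f^2$ and the indicator $T=1\{U\in\Sigma\}$; this yields the explicit but only polynomially-good estimate~\eqref{eq:ccl_0}, namely $\theta^2 \le 1 - (\delta - n^{-1}\ln 2)/(\ln 2 - \max(E_1,E_2))$. By contrast you use only Cauchy--Schwarz on the Fourier side (to get $\|g\|_\infty \le \sqrt{|\Sigma|}\,\|g\|_2$) and once on the physical side, obtaining the Donoho--Stark type bound $\cos\angle(V_S,\hat V_\Sigma)\le \sqrt{|S||\Sigma|/2^n}$, which is exponentially small and valid for every $n$. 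Your argument is genuinely more elementary and delivers a strictly stronger conclusion; the paper's entropic route has the virtue of illustrating how the Hirschman inequality controls the angle, but for the proposition as stated your approach dominates.

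For the converse the two arguments are essentially the same: the paper cites~\eqref{eq:cos_lin} with $S$ a $k_1$-dimensional subspace and $\Sigma$ a $k_2$-dimensional subspace containing $S^\perp$, while you take $\Sigma$ to be $S^\perp$ padded with arbitrary points. Both are fine.
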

\begin{proof} Second part follows from~\eqref{eq:cos_lin}. For the first part, let $\theta = \cos \angle(V_S, \hat
V_\Sigma)$ and $E_1 + E_2 = \ln 2 - \delta$ for $\delta>0$. We will show that
\begin{equation}\label{eq:ccl_0}
	\theta^2 \le 1- {\delta - {1\over n}\ln 2\over \ln 2 - \max(E_1,E_2)}\,.
\end{equation}
Without loss of generality, suppose $E_2\ge E_1$.
Recall that a simple consequence of the Hausdorff-Young inequality is the Hirschmann (or entropic) uncertainty
principle~\cite[Exercise 4.2.10]{tao2006additive}: For any $f:\FF_2^n\to\mreals$ we have
\begin{equation}\label{eq:ccl_1}
		{\Ent(f^2)\over \EE[f^2]} + {\Ent(\hat f^2)\over \EE[\hat f^2]} \le \ln |\FF_2^n| = n\ln 2\,.
\end{equation}	
Thus, taking $f$ supported on $S$ we estimate (from Jensen's inequality)
\begin{equation}\label{eq:ccl_2}
		{\Ent(f^2)\over \EE[f^2]} \ge n \ln 2 - \ln |S| = n(\ln 2 - E_1)\,.
\end{equation}	
Suppose that ${\EE[\hat f^2 1_\Sigma]\over \EE[\hat f^2]} = \theta^2$, and introduce a random variable $U$ taking values
in $\FF_2^n$ with
	$$ \PP[U=u] \eqdef {\hat f^2(u) \over \sum_\omega\hat f^2(\omega)}\,.$$
	Then, we have ${\Ent(\hat f^2)\over \EE[\hat f^2]} = n \ln 2 - H(U)$, with $H(\cdot)$ denoting the Shannon entropy.
	Introducing also $T=1\{U\in \Sigma\}$ we get by the chain rule
	\begin{align} n \ln2 - {\Ent(\hat f^2)\over \EE[\hat f^2]} &=
		H(U) = H(U,T) = H(T) + H(U|T) \\
			&\le \ln 2 + \theta^2 \ln |\Sigma| + (1-\theta^2) \ln |\Sigma^c| \\
			&\le \ln 2 + n(\theta^2 E_2 + (1-\theta^2) \ln 2)\,. \label{eq:ccl_3}
	\end{align}			
Altogether, from~\eqref{eq:ccl_1},~\eqref{eq:ccl_2} and~\eqref{eq:ccl_3} we get~\eqref{eq:ccl_0}.
\end{proof}

\subsection{A similar result for Euclidean space}

It is interesting to observe that a result analogous to Theorem~\ref{th:uncert} in $\mreals^n$ with Lebesgue measure follows from the
sharp form of Young's inequality~\cite{WB75}. This provokes us to hypothesize that the refined
hypercontractivity result on the hypercube (Theorem~\ref{th:hcb}) could play the role of the sharp Young inequality (or
Babenko-Beckner inequality~\cite{Babenko61,WB75}) in $\mreals^n$.

Notation: In this section we define $B_r=\{x\in\mreals^n: \|x\| \le r\}$, $\|x\|^2=(x,x)$, $(x,y)=\sum_{k=1}^n x_k y_k$, $|S|$ -- the
Lebesgue measure of $S$, $\|f\|_p = \left(\int_{\mreals^n} |f(x)|^p dx\right)^{1\over p}$ and for $f\in L_1\cap L_2$
	$$ \hat f(\omega) =  \int_{\mreals^n} e^{-2\pi i(\omega,x)} f(x) dx\,, \qquad \omega \in \mreals^n\,,$$
with the standard extension by continuity to all of $f\in L_2$.
\begin{theorem}\label{th:uncert_eucl} For any $\rho_1,\rho_2 > 0$ satisfying
\begin{equation}\label{eq:bce1}
	\rho_1 \rho_2 < {1\over 4\pi}
\end{equation}
there exist an $\epsilon>0$ and $n_0$ such that for any $n\ge n_0$, any $S\subset \mreals^n$ with $|S|=|B_{\rho_1
\sqrt{n}}|$ and $\Sigma=B_{\rho_2 \sqrt{n}}$ we have
\begin{equation}\label{eq:uncert_eucl}
	\cos \angle (V_S, \hat V_{\Sigma}) \le e^{-n \epsilon}\,,
\end{equation}
where $V_S, \hat V_\Sigma$ are defined in~\eqref{eq:def_vs}-\eqref{eq:def_vhs}.

Conversely, for $\rho_1,\rho_2 \ge 0$ satisfying
\begin{equation}\label{eq:bce2}
	\rho_1 \rho_2 > {1\over 4\pi}
\end{equation}
there exist $\epsilon>0$ and $n_0$ such that for all $n\ge n_0$ we have
\begin{equation}\label{eq:uncert_eucl_anti}
	\cos \angle(V_S, \hat V_{\Sigma}) \ge 1-e^{-n\epsilon}, \qquad S=B_{\rho_1 \sqrt{n}}, \Sigma=B_{\rho_2 \sqrt{n}}\,.
\end{equation}
\end{theorem}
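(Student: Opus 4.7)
The plan is to mirror the proof of Theorem~\ref{th:uncert}, using Beckner's sharp Hausdorff--Young inequality
\[
\|\hat f\|_{p'} \le A_p^n \|f\|_p\,, \qquad A_p^2 = p^{1/p}/(p')^{1/p'}\,,\quad 1 \le p \le 2\,,
\]
in place of the refined hypercontractivity of Theorem~\ref{th:hcb}, and Gaussians in place of scaled indicators of Hamming balls.

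For the forward direction, fix $f \in V_S$, so $\supp f \subset S$, and let $P_\Sigma$ denote the orthogonal projection onto $\hat V_\Sigma$. Plancherel gives $\|P_\Sigma f\|_2^2 = \int_\Sigma |\hat f|^2\,d\omega$. First apply H\"older with exponent $r \ge 2$ whose conjugate is $p \in [1,2]$, obtaining $\int_\Sigma |\hat f|^2 \le |\Sigma|^{1-2/r}\|\hat f\|_r^2$. Next apply Beckner to get $\|\hat f\|_r \le A_p^n \|f\|_p$. Finally, since $\supp f \subset S$, a third application of H\"older gives $\|f\|_p \le |S|^{1/p-1/2}\|f\|_2$. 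Collecting, and using $1-2/r = 2/p-1$,
\[
\|P_\Sigma f\|_2^2 \le A_p^{2n}\bigl(|S|\cdot|\Sigma|\bigr)^{2/p-1}\|f\|_2^2\,.
\]
By Stirling, $\tfrac{1}{n}\log|B_{\rho\sqrt n}| \to \tfrac{1}{2}\log(2\pi e\rho^2)$, so the exponential rate of the prefactor is $\Psi(p) \eqdef 2\log A_p + (2/p-1)\log(2\pi e\rho_1\rho_2)$. Expanding at $p=2$ using $-2\log A_p = (2/p-1)(1-\log 2) + O((2/p-1)^2)$, one sees $\Psi(p)<0$ for some $p$ slightly less than $2$ iff $\log(2\pi e\rho_1\rho_2) < 1-\log 2$, which is exactly~\eqref{eq:bce1}; this yields~\eqref{eq:uncert_eucl}.

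For the converse, I would exhibit Gaussian near-extremizers. Take $f_a(x) = e^{-\pi a\|x\|^2}$, whose Fourier transform $\hat f_a(\omega) = a^{-n/2}e^{-\pi\|\omega\|^2/a}$ is of the same form with parameter $1/a$. The probability measure proportional to $|f_a|^2$ concentrates $\|x\|^2$ near $n/(4\pi a)$, and that proportional to $|\hat f_a|^2$ concentrates $\|\omega\|^2$ near $na/(4\pi)$, with exponential-in-$n$ large deviations (tensorization over the $n$ iid Gaussian coordinates). Hence if $a$ lies in the open interval $(1/(4\pi\rho_1^2),\,4\pi\rho_2^2)$ --- nonempty precisely under~\eqref{eq:bce2} --- then $\|P_S^\perp f_a\|_2^2 + \|P_\Sigma^\perp f_a\|_2^2 \le e^{-n\epsilon}\|f_a\|_2^2$ for some $\epsilon>0$, and the equivalence~\eqref{eq:pdr_4} immediately gives~\eqref{eq:uncert_eucl_anti}.

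The main obstacle is the exact matching of constants at the threshold $\rho_1\rho_2 = 1/(4\pi)$. What makes the argument tight is that the first-order expansion of Beckner's constant at $p=2$ produces a rate of exactly $1-\log 2$, which pairs with the ball-volume asymptotic $|B_{\rho\sqrt n}|^{1/n} \to \sqrt{2\pi e}\,\rho$ to give the critical product $1/(4\pi)$; conversely, the Gaussians that saturate Beckner's inequality are precisely the extremal functions for the converse. This is the $(\RR^n,\Leb)$-analogue of the hypercube picture, confirming that Theorem~\ref{th:hcb} plays the role that the sharp Young / Babenko--Beckner inequality plays in the Euclidean setting.
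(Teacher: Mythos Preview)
Your proof is correct, and your converse argument with Gaussians is essentially the same as the paper's. For the forward direction, however, you take a genuinely different and more direct route.

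The paper mirrors the hypercube proof structurally: it feeds Beckner's sharp \emph{Young} inequality into the heat semigroup to manufacture a hypercontractive estimate $\|e^{t\Delta}f\|_{p(t)} \le e^{nE(t)}\|f\|_2$ with $1/p(t)=1/2-\gamma t$, then bounds $\|\Pi_r f\|_2^2 = (\Pi_r f,f) \le e^{4\pi^2 r^2 t}(e^{t\Delta}f,f)$, applies H\"older, and finally optimizes over the auxiliary parameter $\gamma$. Your argument bypasses the semigroup entirely, using instead Beckner's sharp \emph{Hausdorff--Young} inequality (Babenko--Beckner) sandwiched between two H\"older steps. This is shorter, has one fewer free parameter, and makes the emergence of the threshold $1/(4\pi)$ transparent via the first-order expansion $-2\log A_p = (2/p-1)(1-\log 2)+O((2/p-1)^2)$. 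What the paper's route buys is an explicit structural parallel with Theorem~\ref{th:uncert}, reinforcing its thesis that the refined hypercontractivity of Theorem~\ref{th:hcb} is the hypercube analogue of sharp Young; your route instead highlights that the right Euclidean analogue can equally be read as sharp Hausdorff--Young, and that in $\mathbb{R}^n$ one need not pass through a semigroup at all.
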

\begin{remark}Recall that a standard Heisenberg-Weyl uncertainty (in dimension 1) states that for all $f:\mreals \to \mreals$ with $\int f^2 = 1$
we have
	$$ \left(\int x^2 f^2(x) \right) \left(\int \omega^2 \hat f^2(\omega)\right) \ge {1\over 16\pi^2}\,.$$
	So the product of mean-square widths of $f$ and $\hat f$ should exceed ${1\over 4\pi}$, in accord with our
	estimate.
	\end{remark}
\begin{proof} Since the statement is asymptotic, we will use the standard fact
$$ \ln |B_1| =  {n\over 2} \ln{2\pi e \over n} - {1\over 2}\ln (\pi n) + O({1\over n}) $$
and thus
\begin{equation}\label{eq:bce_0}
	\ln |B_{\rho \sqrt{n}}| = {n\over 2} \ln(2\pi e \rho^2) + O(\ln n)\,.
\end{equation}
To prove the second part, consider the Fourier pair (for any $\sigma>0$):
\begin{align} f(x) &= {1\over (2\pi \sigma^2)^{n\over2}} e^{-{\|x\|^2\over 2\sigma^2}}\\
   \hat f(\omega) &= e^{-2\pi^2 \sigma^2 \|\omega\|^2}\,.
\end{align}
Choose $\sigma>0$ so that $\rho_1 > {\sigma \over \sqrt{2}}$ and $\rho_2 > {1\over
\sqrt{8} \sigma \pi}$ (which is possible due to~\eqref{eq:bce2}).
From concentration of Gaussian measure, it is easy to check that for some $\epsilon>0$ we have
\begin{align} \|f 1_{B_{\rho_1 \sqrt{n}}}\|_2 &\ge (1-e^{-n\epsilon}) \|f\|_2\\
   \|\hat f 1_{B_{\rho_2 \sqrt{n}}}\|_2 &\ge (1-e^{-n\epsilon}) \|\hat f\|_2
\end{align}
and therefore~\eqref{eq:uncert_eucl_anti} follows from~\eqref{eq:pdr_4}.

For the first part, recall a sharp form of the Young inequality on $\mreals^n$ (from~\cite{WB75})
	\begin{equation}\label{eq:sharp_young}
		\|f*g\|_r \le \left({C_p C_q\over C_r}\right)^n \|f\|_p \|g\|_q\,, \qquad C_{s} =
				e^{{1\over 2} ({\ln s\over s}  + {s-1\over s} \ln (1-s^{-1}) )}
	\end{equation}	
	valid for $1\le p,q,r\le \infty$ and ${1\over p} + {1\over q} = 1 + {1\over r}$. Consider the heat semigroup
	$$ e^{t\Delta} f \eqdef f*\phi_t, \qquad \phi_t(x) = {1\over (4\pi t)^{n\over2}} e^{-{\|x\|^2\over 4t}}\,. $$
	Let $\gamma>0$ be a constant to be specified later, and for a real $t<{1\over 2\gamma}$ we set ${1\over
	p(t)}={1\over 2} - \gamma t$. Then apply~\eqref{eq:sharp_young} with $r=p(t), p=2$ and $q=q(t)$ given by
	${1\over q(t)}= 1-\gamma t$ to get, after some calculations, a hypercontractive inequality
\begin{equation}\label{eq:bce_1}
		\|e^{t\Delta} f\|_{p(t)} \le e^{n E(t)} \|f\|_2\,,
\end{equation}	
	where
	$$ E(t) = {\gamma t\over 2} \ln {\gamma \over \pi e^2} + o(t)\,, \qquad t\to 0\,.$$
	Now, we proceed as in the proof of Theorem~\ref{th:uncert} with~\eqref{eq:bce_1} replacing the use of the
	more precise hypercontractivity for the cube.

	Namely, we define the ball-multiplier operator
	$$ \widehat{\Pi_r f}(\omega) = \hat f(\omega) 1_{B_r}(\omega)\,. $$
	Now consider a function $f$ supported on $S$ and note the chain
	\begin{align} \|\Pi_r f\|_2^2 = \la \Pi_r f, f \ra &\le e^{4\pi^2 r^2t} \la e^{t\Delta} f,f \ra \label{eq:bce_2} \\
					  &\le e^{4\pi^2 r^2t} \|f\|_{q(t)} \|e^{t\Delta} f\|_{p(t)}\label{eq:bce_3}\\
					  &\le e^{4\pi^2 r^2t + nE(t)} \|f\|_2 \|f\|_{\tilde q(t)}\label{eq:bce_4}\\
					  &\le e^{4\pi^2 r^2t + nE(t) + \gamma t \ln |S|} \|f\|_2^2 \label{eq:bce_5}\,,
	\end{align}
	where in~\eqref{eq:bce_2} we used the fact that $\widehat{e^{t\Delta} f}(\omega)=e^{-4\pi^2 \|\omega\|^2} \hat
	f(\omega)$, in~\eqref{eq:bce_3} we used H\"older's inequality with $\tilde q(t)$ denoting the conjugate of
	$p(t)$,~\eqref{eq:bce_4} is by~\eqref{eq:bce_1}, and~\eqref{eq:bce_5} is by invoking the bound on the support of
	$S$ via H\"older's inequality
	$$ \|f\|_{\tilde q} \le \|f\|_2 |S|^{{1\over \tilde q} - {1\over 2}}\,.$$
	Taking $r=\rho_2\sqrt{n}$ and using~\eqref{eq:bce_0} to estimate $|S|$, we conclude that
	\begin{equation}\label{eq:bce_6}
		\|\Pi_{r} f\|_2 \le e^{-n \epsilon} \|f\|_2\,,
\end{equation}	
	whenever there is a $\gamma>0$ such that
	$$ 4\pi^2 \rho_2^2 + {\gamma \over 2} \ln {2\rho_1^2 \gamma\over e} < 0\,.$$
	Since $\min_{\gamma >0} \gamma \ln(a\gamma) = -{1\over ea}$, we get that~\eqref{eq:bce_6} holds whenever
	$$ 4\pi^2 \rho_2^2 - {1\over 4\rho_1^2} < 0\,, $$
	which is equivalent to~\eqref{eq:bce1}.
\end{proof}

The structure of the proof for $\mreals^n$ suggests that perhaps it is worthwhile to look for a general inequality on
the hypercube that could replace the use of hypercontractivity in the proof of Theorem~\ref{th:uncert}, i.e. play a role
similar to that of the sharp Young inequality on $\mreals^n$ (of course, the Young
inequality itself cannot be sharpened on the hypercube, or on any finite group).
\apxonly{\textbf{TODO:} I am also somewhat certain, that Theorem~\ref{th:uncert} should be provable from the analysis of
$\|T_t\|_{p\to q}$ outside of hypercontractivity region.}

To complete the parallel with the hypercube, we also note that Proposition~\ref{prop:setset} (uncertainty principle for
general supports) also has an $\mreals^n$-analog.

\begin{proposition} For any $S,\Sigma\subset \mreals^n$ with $|S|=|B_{\rho_1 \sqrt{n}}|$ and
$|\Sigma|=|B_{\rho_2 \sqrt{n}}|$ with
\begin{equation}\label{eq:eucl_setset}
		\rho_1 \rho_2 < {1\over 2\pi e}
\end{equation}	
we have $ \cos(\angle V_S, \hat V_{\Sigma}) \le e^{-n \epsilon}$.
\end{proposition}
\begin{proof}
Denoting by $P_2$
the operator of orthogonal projection on $\hat V_\Sigma$ and taking $f\in V_S$ we get
$$ \|P_2 f\|_2 = \left(\int_\Sigma |\hat f(\omega)|^2 d\omega\right)^{1\over 2} \le \|\hat f\|_\infty |\Sigma|^{1\over2}
	\le \|f \|_1 |\Sigma|^{1\over2} \le
\|f\|_2 (|S|\cdot|\Sigma|)^{1\over 2}$$
and the rest follows from~\eqref{eq:bce_0} and~\eqref{eq:pdr_1}.
\end{proof}

We do not think~\eqref{eq:eucl_setset} is sharp. In fact, it is natural to conjecture that the sharp
constant in~\eqref{eq:eucl_setset} should be ${1\over 4\pi}$, that is that the pair of balls present the worst case for
the uncertainty principle. For the latter, see also the discussion in~\cite[Section I]{jaming2007nazarov}.

\apxonly{

\subsection{Open questions related to uncertainty principle}

List:
\begin{enumerate}
\item Sharp uncertainty bound would follow if the iid estimate
$$ \|\Pi_{\delta n}\|_{2\leftarrow 2-\epsilon} \ge
\sup_{f_1}{\|\Pi f_1^{\otimes n}\|_2\over \|f_1\|_{2-\epsilon}^n} $$
was exponentially tight. So this gives hope that maybe it is.

\end{enumerate}

}

\section{Application: lower bound on spectrum of sparse Boolean functions}\label{sec:ft_coding}

In this section we will use base-2 binary entropy defined as
$$ h_2(\rho) = -\rho \log_2(\rho) - (1-\rho)\log_2(1-\rho)\,,$$
and denote $h_2^{-1}:[0,1] \mapsto [0,1/2]$ its functional inverse.

Consider a sparse Boolean function $\Psi:\FF_2^k \to \{0,1\}$ with $|\supp \Psi|=n$. It is clear that every Fourier coefficient of $\Psi$ satisfies:
$$ \hat\Psi(\omega) = \sum_{x \in \FF_2^k} (-1)^{\langle \omega, x\rangle} \Psi(x) \in [-n, n]\,.$$
What we show below is that Fourier coefficients $\hat \Psi(\omega)$ are large ($\Theta(n)$) even for large frequencies
$\omega$, i.e. $|\omega|\ge c k$, where $|\cdot|$ denotes Hamming weight.

\begin{theorem} Fix $\tau, \rho_1', \rho_2'>0$ such that
	$$ (1-2\rho_1')^2 + (1-2\rho_2')^2 > 1 $$
	There exists $\delta_k \to 0$ such that for $k$, and every Boolean function $\Psi:\FF_2^k \to \{0,1\}$ with $|\supp
	\Psi|=n$, $n \in [\tau k, k/\tau]$ we have
	$$ \max_{\omega: |\omega| \ge \rho_2' k} \hat\Psi(\omega) \ge n (2\sqrt{\rho_1 (1-\rho_1)} + \delta_k)\,,$$
	where $\rho_1 = h_2^{-1}(h_2(\rho_1') \tfrac{k}{n})$.
\end{theorem}
\begin{proof} It will turn out to be more convenient to prove this estimate in the language of linear maps, which we
will do in the next section. Here we notice how to convert to that statement.
Given $\Psi$ define operator $A$ via $Ah = \Psi \ast h$, with $\ast$ denoting convolution. Define also
numbers $d_r$ via
	$$ n-2d_r = \max_{\omega: |\omega|\ge r} \hat \Psi(\omega)\,.$$
	(see~\eqref{eq:ltt1} for an equivalent definition). Then the proof of Theorem~\ref{th:map}, or more
	exactly~\eqref{eq:ltt3}, shows the stated bound.
\end{proof}

\apxonly{\textbf{Question:} Does it hold for non-Boolean functions? Not without lower-bounding the smallest non-zero
element (obviously). Also support cannot be replaced with (e.g.) $|\Psi|_{\ell_1} = n$, since we can take $\Psi =
n2^{-n} 1$.}

\subsection{Restatement as a property of linear maps (coding theory)}

We now restate the previous result as a curious property of linear maps between binary spaces.

\begin{theorem}
\label{th:map}
For any $0<R'<R<1$ there exists $\delta_n\to0$ such that for any linear map $f:\FF_2^k\to\FF_2^n$ with
${k\over n} = R$ there exists an $x\in\FF_2^k$ s.t.
\begin{align}{1\over n}|f(x)| &\le \delta_{LP1}(R') + \delta_n \label{eq:ltt_a}\\
	{1\over k}|x| &\ge  \delta_{LP1}\left({R'\over R}\right) - \delta_n\,, \label{eq:ltt_b}
\end{align}
where $\delta_{LP1}( h_2(\rho)) = {1\over 2} - \sqrt{\rho(1-\rho)}$ is the inverse of the earlier $R_{LP1}(\delta)$
function in~\eqref{eq:rlp1_def}.
\end{theorem}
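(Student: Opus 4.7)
The plan is to argue by contradiction and apply the sharp uncertainty principle (Theorem~\ref{th:uncert}) on $\FF_2^k$. Set $\rho_a=h^{-1}(R'/R)$ and $\rho_b=h^{-1}(R')$, so that $\delta_{LP1}(R'/R)=\tfrac12-\sqrt{\rho_a(1-\rho_a)}$ and $\delta_{LP1}(R')=\tfrac12-\sqrt{\rho_b(1-\rho_b)}$; since $R'<R$, we have $\rho_a>\rho_b$. Assume for contradiction that the conclusion fails for some $\epsilon>0$ and infinitely many $n$: there is a linear $f\colon\FF_2^k\to\FF_2^n$ (with $k=Rn$) such that no $x$ satisfies both $|x|/k\ge\delta_{LP1}(R'/R)-\epsilon$ and $|f(x)|/n\le\delta_{LP1}(R')+\epsilon$. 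Then
\[
T:=f^{-1}\!\bigl(B_{(\delta_{LP1}(R')+\epsilon)n}\bigr)\ \subseteq\ B_{(\delta_{LP1}(R'/R)-\epsilon)k}\ \subset\ \FF_2^k,
\]
so $|T|\le e^{k\,h(\delta_{LP1}(R'/R))-\eta n}$ for some $\eta>0$.

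The crux is to exhibit a nonzero $\psi\colon\FF_2^k\to\mreals$ with $\supp\psi\subseteq T$ and $\supp\hat\psi\subseteq \Sigma=B_{\rho_2^{\star}k}$ for some $\rho_2^{\star}<\rho_a$; the last strict inequality is equivalent to the UP boundary condition, since $(1-2\delta_{LP1}(R'/R))^2+(1-2\rho_2^{\star})^2=4\rho_a(1-\rho_a)+(1-2\rho_2^{\star})^2>1$ precisely when $\rho_2^{\star}<\rho_a$. The natural candidate for $\psi$ is a pullback $\psi(x)=\phi(f(x))$ for a function $\phi\colon\FF_2^n\to\mreals$ supported in $B_{(\delta_{LP1}(R')+\epsilon)n}$: this forces $\supp\psi\subseteq T$ automatically, and the Fourier pushforward identity
\[
\hat\psi(\nu)=2^{k-n}\!\!\sum_{\omega\colon G\omega=\nu}\hat\phi(\omega),
\]
with $G$ the generator matrix of $f$, yields $\supp\hat\psi\subseteq G(\supp\hat\phi)$. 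Once such $\psi\not\equiv 0$ is produced, Theorem~\ref{th:uncert} gives $\cos\angle(V_T,\hat V_\Sigma)\le e^{-k\epsilon'}$, contradicting $\psi\in V_T\cap\hat V_\Sigma$.

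The main obstacle is choosing $\phi$ so that $G(\supp\hat\phi)$ actually lies in a Hamming ball of radius $\rho_2^{\star}k$ with $\rho_2^{\star}<\rho_a$; for a generic $G$, low-weight $\omega\in\FF_2^n$ can map to high-weight $G\omega\in\FF_2^k$. I expect the resolution to come from taking $\hat\phi$ to be a Krawtchouk-type LP1 extremizer (e.g., a scaled/shifted $(K_1-c)^2$) whose positivity and self-duality align with the MacWilliams transform of $C^\perp$, so that the pushforward through $G$ inherits the low-Hamming-weight structure. Should a direct witness remain elusive, an alternative route is to use the inner-product characterization~\eqref{eq:pdr_1} of the uncertainty principle, constructing test pairs from low-degree polynomials on $\FF_2^k$ and from $1_T$, then estimating their inner product via the support bound on $T$ combined with hypercontractivity (Theorem~\ref{th:hcb}). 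The delicate point along either route is to verify that the parameter identities $R'=R\,h(\rho_a)=h(\rho_b)$ align with the uncertainty-principle boundary so as to yield a strictly positive exponential rate $\epsilon'>0$ for every $0<R'<R<1$.
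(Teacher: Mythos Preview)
Your setup is correct up to the point where you reduce the problem to exhibiting a nonzero $\psi$ with $\supp\psi\subseteq T$ and $\supp\hat\psi\subseteq B_{\rho_2^\star k}$ for some $\rho_2^\star<\rho_a$. But this is precisely where the proof breaks down, and you yourself flag it as ``the main obstacle'' without resolving it. The pullback $\psi=\phi\circ f$ does give $\supp\psi\subseteq T$, but the Fourier identity $\hat\psi(\nu)=2^{k-n}\sum_{G\omega=\nu}\hat\phi(\omega)$ only yields $\supp\hat\psi\subseteq G(\supp\hat\phi)$, and for an arbitrary linear $G:\FF_2^n\to\FF_2^k$ there is no control whatsoever on the Hamming weight of $G\omega$ in terms of $|\omega|$ (a single column of $G$ can already have weight $k$). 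The Krawtchouk/LP1 extremizers you allude to have nice self-dual structure on $\FF_2^n$, but that structure does not survive the pushforward through an arbitrary $G$; nothing about MacWilliams duality forces $G(\supp\hat\phi)$ into a Hamming ball. Your alternative route via~\eqref{eq:pdr_1} is too vague to assess. As written, the argument is incomplete.

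The paper's proof uses the uncertainty principle in the opposite direction and avoids the construction problem entirely. The key extra ingredient you are missing is the Friedman--Tillich variational identity
\[
n-2d_r(f)=\max\Bigl\{\tfrac{(Ah,h)}{\|h\|_2^2}:\ \hat h\big|_{B_{r-1}}=0\Bigr\},
\]
where $A$ is the adjacency matrix of the Cayley graph on $\FF_2^k$ with generators the columns of $G$. One takes the Faber--Krahn extremizer $g_B$ on $\FF_2^n$ supported on a ball $B$ of volume $e^{nR'}$, pushes it forward along the graph cover $\FF_2^n\to\FF_2^k$ to get $h_B$ with $|\supp h_B|\le e^{kR'/R}$ and $(Ah_B,h_B)\ge\lambda_B\|h_B\|^2$. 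The uncertainty principle now enters \emph{forwardly}: since $h_B$ has small support, $\|\Pi_{<r}h_B\|_2\ll\|h_B\|_2$ whenever $r/k<\delta_{LP1}(R'/R)$, so $h=h_B-\Pi_{<r}h_B$ is an admissible test function with nearly the same Rayleigh quotient. This gives $n-2d_r\ge\lambda_B+o(n)$, i.e.\ $d_r/n\le\delta_{LP1}(R')+o(1)$. In short: rather than trying to manufacture a forbidden $\psi\in V_T\cap\hat V_\Sigma$, the paper uses UP to show that a small-support function has negligible low-frequency energy, and then feeds this into the eigenvalue characterization of $d_r$.
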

\begin{remark} This estimate significantly outperforms previously best known bounds of  this kind~\cite[Theorem
1]{YP15-abmaps}, but only applies to linear maps.
\end{remark}
We give two different proofs, in two subsections below. Note that the two proofs take slightly different points of view.
The first proof deals with linear maps $f:\FF_2^k\to\FF_2^n$, while the second proof looks rather at images of these
maps, linear codes in $\FF_2^n$. In particular, in the second proof we assume that the image of $f$ is of dimension $k$
(i.e. $f$ is of full rank).

\subsection{Method 1 -- graph covers}\label{sec:map1}

\begin{proof}
To every linear map $f:\FF_2^k\to\FF_2^n$ we associate the following increasing sequence of numbers:
$$ d_{r}(f) \eqdef \min\{|f(x)|: |x| \ge r\}\,,$$
where $d_1$ is just the minimum distance of $f$. Note that, as in coding theory, we think of elements of $\FF_2^k$ and
$\FF_2^n$ as row-vectors and thus map $f$ can be represented as a binary $k \times n$ matrix, whose columns we denote by
$c_1,\ldots, c_n$. Following~\cite{friedman2005generalized} we also
associate to $f$ a Cayley graph $\Gamma$ with vertices $\FF_2^k$ and generators $\{c_i, i=1,\ldots, n\}$. (We will use freely facts from~\cite{friedman2005generalized}, perhaps in a somewhat different formulation, from now on.) Then
\begin{equation}\label{eq:ltt1}
	n-2d_r = \max\left\{ {(Ah,h)\over \|h\|_2^2}: \hat h = 0 \mbox{~on ball $B(0,r-1)$}\right\}\,,
\end{equation}
where $A$ is the adjacency matrix of $\Gamma$. Note that $A$ is also a convolution operator on $\FF_2^k$:
\begin{equation}\label{eq:adef}
	Ah = h*\left(\sum_{i=1}^n \delta_{c_i}\right)\,.
\end{equation}
As in~\cite{friedman2005generalized}, select a covering map $\FF_2^n\to\FF_2^k$ and take $B\subset\FF_2^n$ to be the Hamming ball of radius
$n\rho_1$, with $0<\rho_1<{1\over2}$ found as $h_2(\rho_1)=R'$. There exists a function $g_B:\FF_2^n \to \mreals$, supported on $B$ with the
property:
$$ (A_C ~g_B, g_B) \ge \lambda_B \|g_B\|_2^2\,,$$
where $A_C$ is the adjacency matrix of the $n$-dimensional hypercube, and $\lambda_B = 2n\sqrt{\rho_1(1-\rho_1)} + o(n)$.

Hence, there exists a function $h_B:\FF_2^k \to \mreals$ supported on the image of $B$  under the covering map  with the property:
$$ (A h_B, h_B) \ge \lambda_B \|h_B\|_2^2\,,$$
and $|\supp h_B| \le |B| = 2^{nh_2(\rho_1) + o(n)}$.
Then to get a lower bound on~\eqref{eq:ltt1} we set
$$ h = h_B - \Pi_{<r} h_B\,,$$
where $\Pi_{<r}=\sum_{a<r} \Pi_a$ and $\Pi_a$ is from~\eqref{eq:fproj}.

Note that $A$ and $\Pi_{<r}$ commute and eigenvalues of $A$ are bounded by $n$, so $(A\Pi_{<r} h_B,h_B)=(A\Pi_{<r}
h_B,\Pi_{<r} h_B) \le n (\Pi h_B,h_B)$. We then have:
$$ (Ah,h) = (Ah_B, h_B) - (A\Pi_{<r} h_B, h_B) \ge \lambda_B \|h_B\|_2^2 - n (\Pi_{<r}h_B, h_B) $$
Thus,
\begin{equation}\label{eq:ltt3}
	n-2d_{r} \ge 2n\sqrt{\rho_1(1-\rho_1)} + o(n)
\end{equation}
whenever
$$ \|\Pi_{<r} h_B\|^2_2 \le \|h_B\|_2^2\cdot o(1)\,. $$
Using the uncertainty principle for the $k$-dimensional cube (Theorem~\ref{th:uncert}) we estimate
$$ \|\Pi_{<r} h_B\| \ll \|h_B\|^2\,,$$
as long as %(cf.~\eqref{eq:bc10})
\begin{equation}\label{eq:ltt2}
		{r\over k}< {1\over 2} - \sqrt{\rho_1' (1-\rho_1')}\,,
\end{equation}
where $\rho_1'$ is found from $h_2(\rho_1') = {h_2(\rho_1)\over R} = {R'\over R}$. After simple algebra, we see
that~\eqref{eq:ltt3}-\eqref{eq:ltt2} are equivalent to~\eqref{eq:ltt_a}-\eqref{eq:ltt_b}.

\end{proof}

\medskip
\subsection{Method 2 -- analytic}

%We follow~\cite{navon2009linear}.

We start an uncertainty-type claim for subspaces of $\FF_2^n$.

Let $C$ be a $k$-dimensional linear subspace $C$ of $\FF_2^n$. Given a basis $\mathbf{v} = \Big\{v_1,...,v_k\Big\}$ of $C$, denote the length of representation of a vector $x \in C$ in terms of $V$ by $|x|_{\mathbf{v}}$.

\begin{lemma}
\label{lem:subspace-uncertainty}
Let $f$ be a function supported on a subset $A \subseteq \FF_2^n$. Let $0 \le r \le k \le n$ be integer parameters such
that $k \ge \log_2 |A|$, and, moreover, writing $|A| = 2^{h_2(\rho_1) \cdot k}$, ${k \choose r} = 2^{h_2(\rho_2) \cdot k}$, we have $(1 - 2 \rho_1)^2 + (1 - 2 \rho_2)^2 ~>~ 1$.

Then, for any $k$-dimensional subspace $C$ of $\FF_2^n$ and for any basis $\mathbf{v}$ of $C$ holds
\[
\sum_{\omega \in C, |\omega|_{\mathbf{v}} \le r} \widehat{f}^2(\omega) \ll \sum_{\omega \in C} \widehat{f}^2(\omega)
\]
Here the $\ll$ sign means that the LHS is exponentially smaller than the RHS.
\end{lemma}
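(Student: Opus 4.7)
The plan is to reduce Lemma~\ref{lem:subspace-uncertainty} to the ordinary uncertainty principle on $\FF_2^k$ (Theorem~\ref{th:uncert}) by pushing $f$ down to a function on the smaller cube along the basis $\mathbf{v}$. Let $V:\FF_2^k\to\FF_2^n$ be the injective linear map with $Ve_i=v_i$, so that $V$ restricts to a bijection onto $C$ with $|Vu|_{\mathbf{v}}=|u|$, and let $V^{\mathrm T}:\FF_2^n\to\FF_2^k$ be its transpose over $\FF_2$, so that $\langle Vu,x\rangle=\langle u,V^{\mathrm T}x\rangle$ for all $u\in\FF_2^k$ and $x\in\FF_2^n$.

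Define the pushforward $g:\FF_2^k\to\mreals$ by $g(y)=\sum_{x:\,V^{\mathrm T}x=y}f(x)$. Two observations drive the reduction: (a) $\supp g\subseteq V^{\mathrm T}(A)$, hence $|\supp g|\le|A|=2^{h(\rho_1)k}$; and (b) a one-line character computation yields $\hat g(u)=\hat f(Vu)$ for every $u\in\FF_2^k$. Since $u\mapsto Vu$ is a bijection onto $C$ preserving the weight $|\cdot|_{\mathbf{v}}$, the Fourier sums in the lemma translate exactly to
\[
\sum_{\omega\in C,\,|\omega|_{\mathbf{v}}\le r}\hat f(\omega)^2=\sum_{|u|\le r}\hat g(u)^2,\qquad \sum_{\omega\in C}\hat f(\omega)^2=\sum_{u\in\FF_2^k}\hat g(u)^2,
\]
so by Parseval the claim reduces to $\|\Pi_{\le r}g\|_2\ll\|g\|_2$ on $\FF_2^k$, in the notation of~\eqref{eq:fproj}.

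Now I would invoke Theorem~\ref{th:uncert} on $\FF_2^k$ with $S=V^{\mathrm T}(A)$ and $\Sigma=B_{r}$, where $r\approx\rho_2 k$ follows from $\binom{k}{r}=2^{h(\rho_2)k}$ via Stirling. The support bound $|S|\le 2^{h(\rho_1)k}$ and the gap condition $(1-2\rho_1)^2+(1-2\rho_2)^2>1$ match the hypotheses of the theorem precisely, so we obtain $\|\Pi_{\le r}g\|_2\le e^{-k\epsilon}\|g\|_2$ for some $\epsilon>0$ and all $k$ sufficiently large, which is exactly the required exponential gap. The main obstacle is bookkeeping: verifying (b) with the paper's normalization of the Fourier transform, and reconciling the base-$2$ versus natural-logarithm entropy conventions between the lemma and Theorem~\ref{th:uncert} (they agree after converting via $2^{h_2(\rho)k}=e^{h_{\ln}(\rho)k}$). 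Once these are settled, the reduction is immediate and explains why the hypothesis of the lemma takes its specific form.
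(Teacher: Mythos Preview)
Your proposal is correct and follows essentially the same route as the paper: both reduce to Theorem~\ref{th:uncert} on $\FF_2^k$ by pushing $f$ forward along the linear map $V^{\mathrm T}=M:\FF_2^n\to\FF_2^k$ determined by the basis $\mathbf{v}$, using that the resulting function has support of size at most $|A|$ and Fourier coefficients $\hat g(u)=\hat f(Vu)$. The paper arrives at the same $g$ via the intermediate $F=|C|\cdot f*1_{C^\perp}$ (constant on cosets of $C^\perp=\ker M$), whereas you define the pushforward directly; the two constructions coincide up to a harmless scalar.
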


\begin{proof}
Let $F = f \ast 1_{C^{\perp}}$.

Note that $F$ is constant on cosets of $C^{\perp}$ and that $\widehat{F}(\omega) = \left\{\begin{array}{ccc} |C^\perp| \widehat{f}(\omega) & \mbox{if} & \omega \in C \\ 0 & & \mbox{otherwise} \end{array} \right.$.

Let $M$ be a $k \times n$ matrix with rows $v_1,...,v_k$. We define a function $g$ on $\FF_2^k$ as follows. For $x \in \FF_2^k$, the pre-image $\{y \in \FF_2^n, M y = x\}$ is a coset of $C^{\perp}$, and we set $g(x)$ to be the (fixed) value of $F$ on this coset.

Next, we calculate the Fourier transform of $g$. Let $\alpha \in \FF_2^k$. Let $\omega = \alpha^t M \in C$. We
claim that $\widehat{g}(\alpha) = \widehat{f}(\omega)$. To see this, note that for any $y$ such that $My = x$ holds
$\la x,\alpha \ra = \la My, \alpha \ra = \la y, M^t \alpha \ra = \la y,\omega \ra$. Using this we compute
\[
\widehat{g}(\alpha) = \sum_{x \in \FF_2^k} g(x) (-1)^{\la x,\alpha\ra} = {1\over |C^\perp|}\sum_{x \in \FF_2^k} \sum_{y:
My = x} F(y) (-1)^{\la y,\omega\ra} = {1\over |C^\perp|}\widehat{F}(\omega) = \widehat{f}(\omega)
\]

Next, we apply the uncertainty principle for $g$ on $\FF_2^k$. Observe that the cardinality of the support of $g$ is given by the number of cosets of $C^{\perp}$ intersecting $A$, which is at most $|A|$. The constraints on $|A|$, $k$, and $r$ imply $\sum_{|\alpha| \le r} \widehat{g}^2(\alpha) \ll \sum_\alpha \widehat{g}^2(\alpha)$, which is equivalent to the claim of the lemma.

\end{proof}

We now prove Theorem~\ref{th:map}, first restating it for linear codes rather than for linear maps.
\begin{theorem}
\label{thm:code}
Let $0 < R < 1$. Let $C \subseteq \FF_2^n$ be a linear code of rate $k = Rn$. Let $\mathbf{v} = \Big\{v_1,...,v_{k}\Big\}$ be a basis of $C$. Then for any $0 \le R' < R$ there is a vector $x \in C$ with
\[
\frac1n |x| \le \delta_{LP1}(R') + \delta_n  \quad \text{and} \quad \frac1k |x|_{\mathbf{v}} \ge  \delta_{LP1}\left(\frac{R'}{R}\right) - \delta_n
\]
\end{theorem}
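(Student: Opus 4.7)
The plan is to mirror the structure of Method 1 above, substituting the direct application of Lemma \ref{lem:subspace-uncertainty} on $\FF_2^n$ for Method 1's use of the hypercube uncertainty principle (Theorem \ref{th:uncert}) on $\FF_2^k$. First I would recast the claim as a Rayleigh-quotient bound for the generalized minimum distance: letting $A$ denote the hypercube Laplacian on $\FF_2^n$ (diagonalized in the character basis with eigenvalue $n-2|\omega|$ at $\chi_\omega$), one has
\begin{equation*}
n - 2\min\{|x|:\ x\in C,\ |x|_{\mathbf{v}}\ge r\} \;=\; \max\!\left\{\frac{(A\Psi,\Psi)}{\|\Psi\|_2^2}:\ \supp\hat\Psi \subseteq C\cap\{|\cdot|_{\mathbf{v}}\ge r\}\right\}.
\end{equation*}
So for $r=\lceil k(\delta_{LP1}(R'/R)-\delta_n)\rceil$ it suffices to exhibit $\Psi$ with Rayleigh quotient at least $2n\sqrt{\rho(1-\rho)}-o(n)$, where $h(\rho)=R'$.

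To produce such a $\Psi$, I would reuse the Friedman--Tillich ball-eigenfunction input already used in Method 1: for any $\rho_B$ slightly less than $h^{-1}(R')$ and $B = B_{\rho_B n}\subseteq\FF_2^n$, there exists a non-negative $g_B$ supported in $B$ whose pushforward $p_* g_B$ under the cover $p:\FF_2^n\to\FF_2^k$, $p(y)=My$, satisfies $(A_\Gamma\, p_* g_B,\, p_* g_B) \ge \lambda_B \|p_* g_B\|_2^2$ with $\lambda_B = 2n\sqrt{\rho_B(1-\rho_B)}-o(n)$. By the same Fourier identity used in the proof of Lemma \ref{lem:subspace-uncertainty}, this pushforward inequality is exactly equivalent to
\begin{equation*}
\sum_{\omega\in C}(n-2|\omega|)\,\hat g_B^2(\omega)\;\ge\;\lambda_B\sum_{\omega\in C}\hat g_B^2(\omega).
\end{equation*}
I would then set $\hat\Psi(\omega) = \hat g_B(\omega)\mathbf{1}_C(\omega)\mathbf{1}\{|\omega|_{\mathbf{v}}>r\}$ and apply Lemma \ref{lem:subspace-uncertainty} with $f = g_B$, noting $|B| = 2^{k\,h(\rho_B)/R + o(n)}$. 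A direct check (choosing $\rho_B$ close enough to $h^{-1}(R')$ and $\delta_n>0$) shows that the lemma's hypothesis $(1-2\rho_1)^2+(1-2\rho_2)^2>1$ holds — indeed it reduces to $h(\rho_B)/R < h(h^{-1}(R'/R))$, which is precisely how $\rho_B$ was chosen. The lemma then yields $\sum_{\omega\in C,\ |\omega|_{\mathbf{v}}\le r}\hat g_B^2 = o\!\left(\sum_{\omega\in C}\hat g_B^2\right)$.

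Combining the two displayed inequalities above — splitting the sums at $|\omega|_{\mathbf{v}}=r$ and using $n-2|\omega|\le n$ on the negligible low-basis-weight tail — yields $(A\Psi,\Psi)/\|\Psi\|_2^2 \ge \lambda_B - o(n)$. The Rayleigh characterization from the first paragraph then produces $x\in C$ with $|x|_{\mathbf{v}}>r$ and $n-2|x|\ge\lambda_B-o(n)$; letting $\rho_B\uparrow h^{-1}(R')$ and $\delta_n\downarrow 0$ as $n\to\infty$ delivers the stated bounds on $|x|/n$ and $|x|_{\mathbf{v}}/k$. The main obstacle is the ball-eigenfunction existence invoked in the second paragraph: this is the Friedman--Tillich covering-graph input used identically in Method 1, and it is the only nontrivial ingredient the argument needs beyond Lemma \ref{lem:subspace-uncertainty} itself.
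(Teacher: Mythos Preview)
Your proposal is correct and follows essentially the same route as the paper's Method~2. Both arguments use the Friedman--Tillich ball eigenfunction $g_B$, restrict its Fourier transform to $C$ (you phrase this via the pushforward $p_*g_B$ and the Fourier identity from Lemma~\ref{lem:subspace-uncertainty}; the paper phrases it via $F=|C|\cdot g_B\ast 1_{C^\perp}$), apply Lemma~\ref{lem:subspace-uncertainty} to kill the low-$|\cdot|_{\mathbf v}$ Fourier mass, and read off the desired codeword. The only difference is packaging: you set up a Rayleigh-quotient characterization at the outset and bound it from below, whereas the paper rearranges to $n\sum_{x\in C,\,|x|\le d}\hat g_B^2(x)\ge\sum_{x\in C}\hat g_B^2(x)$ and argues positivity directly---but these are the same computation.
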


\begin{proof}

Let $r = h_2^{-1}(R') \cdot n$. Let $B$ be the Hamming ball of radius $r$ around zero in $\FF_2^n$. As in \cite{friedman2005generalized}, let $g_B$ be a function supported on $B$, with the property:
$$ (A_C ~g_B, g_B) \ge \lambda_B \|g_B\|_2^2\,,$$
where $A_C$ is the adjacency matrix of the $n$-dimensional hypercube, and $\lambda_B = 2n\sqrt{\frac{r}{n}(1-\frac{r}{n})} + o(n)$.

Let $d = \frac{n - \lambda_B + 1}{2}$. Note that $d = \left(\frac12 - \sqrt{h_2^{-1}(R')\left(1-h_2^{-1}(R')\right)}\right) \cdot n  + o(n) = \delta_{LP1}(R') \cdot n + o(n)$.

Note that $|B| = 2^{R' \cdot n} \le |C| = 2^k$. We introduce two additional parameters with a view towards using Lemma~\ref{lem:subspace-uncertainty}. Let
$\rho_1$ be such that $|B| = 2^{h_2(\rho_1) \cdot k}$, and let $\rho_2$ satisfy $(1 - 2 \rho_1)^2 + (1 - 2 \rho_2)^2 = 1$. Computing explicitly,
\[
\rho_1 = h_2^{-1}\left(\frac{R'}{R}\right) \quad \text{and} \quad \rho_2 = \delta_{LP1}\left(\frac{R'}{R}\right)
\]

We proceed with the following computation, as in~\cite{navon2009linear}. Let $F = |C| \cdot g_B \ast 1_{C^{\perp}}$. Compute $(A_C F, F)$ in two ways.
On one hand, since $A_C$ commutes with convolutions, we have $(A_C ~F, F) \ge \lambda_B \cdot (F,F) = \frac{\lambda_B}{2^n} \sum_x \widehat{F}^2(x)$.
On the other hand, observe that $A_C F = F \ast \left(\sum_{i=1}^n \delta_{e_i}\right)$, where $e_i$ is the $i^{\tiny{th}}$ unit vector (compare with~\eqref{eq:adef}). Hence $\widehat{A_C F}(x) = \widehat{F}(x) \cdot \left(\sum_{i=1}^n \widehat{\delta_{e_i}}(x)\right) = \widehat{F}(x) \cdot \left(\sum_{i=1}^n (-1)^{x_i}\right) =
(n-2|x|) \cdot \widehat{F}(x)$. And therefore $(A_C ~F, F) = \frac{1}{2^n} \sum_x (n - 2|x|) \cdot \widehat{F}^2(x)$. Substituting $\lambda_B = n - 2d + 1$, we get the inequality $\sum_x (n - 2|x|) \cdot \widehat{F}^2(x) \ge (n-2d+1) \cdot \sum_x \widehat{F}^2(x)$. Rearranging and simplifying, this implies that $2d \cdot \sum_{|x| \le d} \widehat{F}^2(x) \ge \sum_x \widehat{F}^2(x)$. 

Since $\widehat{F} = |C| \cdot \widehat{g_B} \cdot \widehat{1_{C^{\perp}}} = 2^n \cdot \widehat{g_B} \cdot 1_C$, we deduce that 
\[
2d \cdot \sum_{x \in C,|x| \le d} \widehat{g_B}^2(x) \ge \sum_{x \in C} \widehat{g_B}^2(x).
\]

We now apply Lemma~\ref{lem:subspace-uncertainty} for $g_B$. By the lemma, we can choose a sequence $\delta_n \rightarrow 0$, so that for $r < \left(\rho_2 - \delta_n\right) \cdot k$ holds 
\[
\sum_{x \in C, |x|_{\mathbf{v}} > r} \widehat{g_B}^2(x) > \left(1 - \frac{1}{2d}\right) \cdot \sum_{x \in C} \widehat{g_B}^2(x).
\]

Combining these two inequalities, we deduce that $\sum_{x \in C, |x| \le d, |x|_{\mathbf{v}} > r} \widehat{g_B}^2(x) > 0$, implying that there exists a vector $x \in C$ such that $|x| \le d$ and $|x|_{\mathbf{v}} > r$, proving the claim of the theorem.

\end{proof}

\section*{Acknowledgement}

The work of Y.P. was supported (in part) by the National Science Foundation under Grant No CCF-13-18620, and by the
Center for Science of Information (CSoI), an NSF Science and Technology Center, under grant agreement CCF-09-39370.
The work of A. S. was supported (in part) by grants from the US-Israel Binational Science Foundation and from the Israel Science Foundation.

%\bibliographystyle{abbrv}
%\bibliography{IEEEabrv,reports}

\begin{thebibliography}{10}

\bibitem{amrein1977support}
W.~Amrein and A.~Berthier.
\newblock On support properties of $l_p$-functions and their {F}ourier
  transforms.
\newblock {\em J. Func. Anal.}, 24(3):258--267, 1977.

\bibitem{Babenko61}
K.~Babenko.
\newblock An inequality in the theory of {F}ourier integrals.
\newblock {\em Izv. Akad. Nauk SSSR, Ser. Mat}, 25:531--542, 1961.

\bibitem{bakry1994hypercontractivite}
D.~Bakry.
\newblock L'hypercontractivit{\'e} et son utilisation en th{\'e}orie des
  semigroupes.
\newblock In {\em Lectures on probability theory}, pages 1--114. Springer,
  1994.

\bibitem{bakry2006functional}
D.~Bakry.
\newblock Functional inequalities for {M}arkov semigroups.
\newblock In {\em Probability measures on groups}, pages 91--147. Tata
  Institute of Fundamental Research, Mubai, 2006.

\bibitem{WB75}
W.~Beckner.
\newblock Inequalities in {F}ourier analysis.
\newblock {\em Ann. Math.}, 102(1):159--182, July 1975.

\bibitem{beckner1995pitt}
W.~Beckner.
\newblock Pitt's inequality and the uncertainty principle.
\newblock {\em Proc. Amer. Math. Soc.}, 123(6):1897--1905, 1995.

\bibitem{benedicks1985fourier}
M.~Benedicks.
\newblock On {F}ourier transforms of functions supported on sets of finite
  {L}ebesgue measure.
\newblock {\em J. Math. Anal. Appl.}, 106(1):180--183, 1985.

\bibitem{bobkov1998modified}
S.~G. Bobkov and M.~Ledoux.
\newblock On modified logarithmic {S}obolev inequalities for {B}ernoulli and
  {P}oisson measures.
\newblock {\em Journal of functional analysis}, 156(2):347--365, 1998.

\bibitem{Bobkov2006}
S.~G. Bobkov and P.~Tetali.
\newblock Modified logarithmic {S}obolev inequalities in discrete settings.
\newblock {\em Journal of Theoretical Probability}, 19(2):289--336, Jun 2006.

\bibitem{AB70}
A.~Bonami.
\newblock {\'E}tude des coefficients de {F}ourier des fonctions de $l_p(g)$.
\newblock {\em Ann. Inst. Fourier (Grenoble)}, 20(2):335--402, 1970.

\bibitem{Carlen91}
E.~A. Carlen.
\newblock Superadditivity of {F}isher's information and logarithmic {S}obolev
  inequalities.
\newblock {\em Journal of Functional Analysis}, 101(1):194 -- 211, 1991.

\bibitem{costa1984similarity}
M.~Costa and T.~Cover.
\newblock On the similarity of the entropy power inequality and the
  {B}runn-{M}inkowski inequality (corresp.).
\newblock {\em {IEEE} Trans. Inf. Theory}, 30(6):837--839, 1984.

\bibitem{davies1984ultracontractivity}
E.~B. Davies and B.~Simon.
\newblock Ultracontractivity and the heat kernel for {S}chr{\"o}dinger
  operators and {D}irichlet {L}aplacians.
\newblock {\em J. Func. Anal.}, 59(2):335--395, 1984.

\bibitem{DSC96}
P.~Diaconis and L.~Saloff-Coste.
\newblock Logarithmic {S}obolev inequalities for finite {M}arkov chains.
\newblock {\em Ann. Appl. Probab.}, 6(3):695--750, 1996.

\bibitem{erbar2012ricci}
M.~Erbar and J.~Maas.
\newblock Ricci curvature of finite {M}arkov chains via convexity of the
  entropy.
\newblock {\em Archive for Rational Mechanics and Analysis}, 206(3):997--1038,
  2012.

\bibitem{friedman2005generalized}
J.~Friedman and J.-P. Tillich.
\newblock Generalized {A}lon--{B}oppana theorems and error-correcting codes.
\newblock {\em SIAM Journal on Discrete Mathematics}, 19(3):700--718, 2005.

\bibitem{fuchs1954magnitude}
W.~H.~J. Fuchs.
\newblock On the magnitude of {F}ourier transforms.
\newblock In {\em Proc. Int. Math. Cong., Amsterdam}, pages 106--107, 1954.

\bibitem{LG75}
L.~Gross.
\newblock Logarithmic {S}obolev inequalities.
\newblock {\em Amer. J. Math.}, 97:1061--1083, 1975.

\bibitem{HLP88}
G.~H. Hardy, J.~E. Littlewood, and G.~Polya.
\newblock {\em Inequalities}.
\newblock Cambridge University Press, 1988.

\bibitem{hartman2002ordinary}
P.~Hartman.
\newblock {\em Ordinary differential equation}.
\newblock John Wiley \& Sons, New York, USA, 1964.

\bibitem{havin1994uncertainty}
V.~Havin and B.~J{\"o}ricke.
\newblock {\em The uncertainty principle in harmonic analysis}.
\newblock Springer, 1994.

\bibitem{jaming2007nazarov}
P.~Jaming.
\newblock Nazarov's uncertainty principles in higher dimension.
\newblock {\em J. Approximation Th.}, 149(1):30--41, 2007.

\bibitem{KKL88}
J.~Kahn, G.~Kalai, and N.~Linial.
\newblock The influence of variables on {B}oolean functions.
\newblock In {\em Proc. 29th Ann. Symp. on Foundations of Comp. Sci.}, pages
  68--80, Los Alamitos, CA, 1988.

\bibitem{KM_uncert}
J.~Kahn and R.~Meshulam.
\newblock Uncertainty inequalities on {H}amming cubes.
\newblock manuscript.

\bibitem{landau1961prolate}
H.~J. Landau and H.~O. Pollak.
\newblock Prolate spheroidal wave functions, {F}ourier analysis and
  uncertainty-- {II}.
\newblock {\em Bell Syst. Tech. J.}, 40(1):65--84, 1961.

\bibitem{MRRW77}
R.~McEliece, E.~Rodemich, H.~Rumsey, and L.~Welch.
\newblock New upper bounds on the rate of a code via the
  {D}elsarte-{M}ac{W}illiams inequalities.
\newblock {\em {IEEE} Trans. Inf. Theory}, 23(2):157--166, 1977.

\bibitem{miclo1999majoration}
L.~Miclo.
\newblock Une majoration sous-exponentielle pour la convergence de l'entropie
  des cha{\^\i}nes de {M}arkov {\`a} trou spectral.
\newblock {\em Ann. Inst. H. Poincar{\'e} Probab. Statist}, 35(3):261--311,
  1999.

\bibitem{MOS12}
E.~Mossel, K.~Oleszkiewicz, and A.~Sen.
\newblock On reverse hypercontractivity.
\newblock {\em Geometric and Functional Analysis}, 23(3):1062--1097, 2013.

\bibitem{navon2009linear}
M.~Navon and A.~Samorodnitsky.
\newblock Linear programming bounds for codes via a covering argument.
\newblock {\em Discrete \& Computational Geometry}, 41(2):199--207, 2009.

\bibitem{nazarov1993local}
F.~L. Nazarov.
\newblock Local estimates for exponential polynomials and their applications to
  inequalities of the uncertainty principle type.
\newblock {\em Algebra i Analiz (in Russian)}, 5(4):3--66, 1993.

\bibitem{EN66}
E.~Nelson.
\newblock A quartic interaction in two dimensions.
\newblock In R.~Goodman and I.~Segal, editors, {\em Mathematical Theory of
  Elementary Particles}, Cambridge, MA, 1966. M.I.T. Press.

\bibitem{YP15-abmaps}
Y.~Polyanskiy.
\newblock On metric properties of maps between {H}amming spaces and related
  graph homomorphisms.
\newblock {\em J. Combin. Theory Ser. A}, 145:227--251, 2017.

\bibitem{saloff1997lectures}
L.~Saloff-Coste.
\newblock Lectures on finite {M}arkov chains.
\newblock In {\em Lectures on probability theory and statistics}, pages
  301--413. Springer, 1997.

\bibitem{AS_Delsarte}
A.~Samorodnitsky.
\newblock Extremal properties of solutions for {D}elsarte's linear program.
\newblock {\em preprint}.

\bibitem{samorodnitsky2008modified}
A.~Samorodnitsky.
\newblock A modified logarithmic {S}obolev inequality for the {H}amming cube
  and some applications.
\newblock {\em arXiv preprint arXiv:0807.1679}, 2008.

\bibitem{slepian1961prolate}
D.~Slepian and H.~O. Pollak.
\newblock Prolate spheroidal wave functions, {F}ourier analysis and uncertainty
  -- {I}.
\newblock {\em Bell Syst. Tech. J.}, 40(1):43--63, 1961.

\bibitem{stam1959some}
A.~Stam.
\newblock Some inequalities satisfied by the quantities of information of
  {F}isher and {S}hannon.
\newblock {\em Inf. Contr.}, 2(2):101--112, 1959.

\bibitem{Stroock84}
D.~W. Stroock.
\newblock {\em An introduction to the theory of large deviations}.
\newblock Universitext, Springer-Verlag, New York, 1984.

\bibitem{tao2006additive}
T.~Tao and V.~H. Vu.
\newblock {\em Additive combinatorics}, volume 105.
\newblock Cambridge University Press, 2006.

\bibitem{Varopoulos85}
N.~T. Varopoulos.
\newblock {H}ardy-{L}ittlewood theory for semigroups.
\newblock {\em J. Functional Analysis}, 63(2):240--260, 1985.

\bibitem{weissler1978logarithmic}
F.~B. Weissler.
\newblock Logarithmic {S}obolev inequalities for the heat-diffusion semigroup.
\newblock {\em Trans. Amer. Math. Soc.}, 237:255--269, 1978.

\bibitem{witsenhausen1974entropy}
H.~Witsenhausen.
\newblock Entropy inequalities for discrete channels.
\newblock {\em {IEEE} Trans. Inf. Theory}, 20(5):610--616, 1974.

\bibitem{wyner1973theorem}
A.~D. Wyner and J.~Ziv.
\newblock A theorem on the entropy of certain binary sequences and
  applications--{I}.
\newblock {\em {IEEE} Trans. Inf. Theory}, 19(6):769--772, Nov. 1973.

\end{thebibliography}

\end{document}